\newcommand{\bel}[1]{\begin{equation}\label{#1}}
\newcommand{\be}{\begin{equation}}
\newcommand{\ba}{\begin{eqnarray}}
\newcommand{\ea}{\end{eqnarray}}
\newcommand{\rf}[1]{(\ref{#1})}
\newcommand{\bi}{\bibitem}
\newcommand{\qe}{\end{equation}}
\newcommand{\R}{\mathbb{R}}
\newcommand{\N}{\mathbb{N}}
\newcommand{\Np}{\mathbb{N}_+}
\newcommand{\Z}{\mathbb{Z}}
\newcommand{\D}{\Delta}
\newcommand{\cube}{\square}
\newcommand{\cubex}{\boxtimes}
\newcommand{\map}{\longrightarrow}
\newcommand{\too}{\longrightarrow}
\newcommand{\half}{\frac12}
\newcommand{\smin}{\setminus}
\providecommand{\ce}{\mathrel{\mathop:}=}
\providecommand{\ec}{=\mathrel{\mathop:}}
\providecommand{\ceq}{\mathrel{\mathop:}\Leftrightarrow}
\providecommand{\co}{\colon}
\providecommand{\txind}[1]{#1\index{#1}}
\providecommand{\cl}[1]{\overline{#1}}
\providecommand{\maxind}{\hat{\imath}}
\providecommand{\com}[1]{}
\providecommand{\abs}[1]{\lvert#1\rvert}
\providecommand{\set}[1]{\lbrace#1\rbrace}
\providecommand{\bigset}[1]{\big\lbrace#1\big\rbrace}
\providecommand{\Bigset}[1]{\Big\lbrace#1\Big\rbrace}
\DeclareMathOperator{\argmax}{arg\,max}
\newcommand{\bd}{\partial}
\newcommand{\dd}[2]{\frac{\partial#1}{\partial#2}}
\newcommand{\ddsq}[2]{\frac{\partial^2#1}{{(\partial#2)}^2}}
\newcommand{\ddd}[3]{\frac{\partial^2#1}{\partial{#2}\partial{#3}}}
\newcommand{\si}{r} 				%blow-up index 1
\newcommand{\ri}{s}				%blow-up index 2
\newcommand{\m}{m}				%exchange remnant (was $d$)
\providecommand{\fdt}{\,\cdot\,}
\newcommand{\ind}{\chi\mathnormal{}}
\newcommand{\leb}{\lambda\hspace{-5 pt}\lambda}
\renewcommand{\phi}{\varphi}
\newenvironment{eqn*}{\begin{equation*}}{\end{equation*}}
\newcommand{\ie}{i.\,e.\@\xspace}
\newcommand{\eg}{e.\,g.\@\xspace}
\newcommand{\cf}{cf.\@\xspace}
\newcommand{\wlg}{w.\,l.\,o.\,g.\@\xspace}
\newcommand{\resp}{resp.\@\xspace}
\newcommand{\zeroth}{0{th}\xspace}
\newcommand{\second}{2{nd}\xspace}
\newcommand{\ord}{-{th}\xspace}
\providecommand{\WF}{Wright--Fisher\xspace}
\providecommand{\KBE}{Kolmogorov backward equation\xspace}
\providecommand{\KFE}{Kolmogorov forward equation\xspace}
\numberwithin{equation}{section} 
\theoremstyle{plain} %default
\newtheorem{thm}{Theorem}[section]%[section]
\newtheorem{prop}[thm]{Proposition}%[section]
\newtheorem{lem}[thm]{Lemma}%[section]
\newtheorem{cor}[thm]{Corollary}%[section]
\theoremstyle{definition}
\newtheorem{dfi}[thm]{Definition}%[section]
\newtheorem{rmk}[thm]{Remark}%[section]
\definecolor{gruen}{rgb}{0.6,0,0.5}
\renewcommand{\L}{\mathcal{L}}
\begin{document}

\setcounter{section}{0}

\title{The uniqueness of hierarchically extended backward solutions of the Wright–Fisher model}

\author{Julian Hofrichter, Tat Dat Tran, Jürgen Jost}

\date{\today}

\maketitle

%\KOMAoptions{abstract=on}

\begin{abstract}
The diffusion approximation of the Wright-Fisher model of population genetics leads to partial differentiable equations, the so-called Kolmogorov equations,  with an operator that degenerates at the boundary. Standard tools do not apply, and in fact, solutions lack regularity properties. In this paper, we develop a regularising blow-up scheme for a certain class of solutions of the backward Kolmogorov equation, the iteratively extended global solutions presented in \cite{THJ5}, and establish their uniqueness. As the model describes the random genetic drift of several alleles at the same locus from a backward perspective, the  singularities result from the loss of an allele. While in an analytical approach, this causes substantial difficulties, from a biological or geometric perspective, this is a natural process that can be analyzed in detail. The presented scheme regularises the solution via a tailored successive transformation of the domain.
\end{abstract}

\textbf{Keywords:} Wright-Fisher model; random genetic drift; backward Kolmogorov equation; global solution; loss of alleles; blow-up of solutions

\section{Introduction}

The Wright-Fisher model \cite{fisher,wright1} is concerned with  genetic drift which constitutes the most basic mechanism of mathematical population genetics. Let us briefly describe this. In a finite population of fixed size, parents are randomly sampled and pass the alleles which they are carrying on to the offspring generation. By repeating this process over many (non-overlapping) generations, the model describes the evolution of the probabilities of the different alleles in the population. In the basic setting, the model covers a single locus only. Extensions to several loci are possible, as is the inclusion of  mutation, selection, or a spatial population structure. This has the leading research strand in mathematical population genetics (\cite{ewens,buerger}) since the  work of Kimura \cite{kimura1,kimura2,kimura3}. 

Even for the  original basic model, there  remain interesting and deep mathematical questions, particularly when one passes to the  diffusion approximation. Following Kimura, one shifts to a model with an infinite population size and continuous time. This leads to  the forward and backward Kolmogorov equations. The forward equation is a partial differential equation of parabolic type, while the backward equation is not parabolic as it evolves backward in time; it is the adjoint of the former w. r. t. a suitable product. Both equations become degenerate at the boundary.  This makes standard PDE theory inapplicable, and this therefore is a source of mathematical challenges. 

In this paper, we study  solutions of the \KBE 
\bel{comp0}
-\dd{}{t} u(p,t) = \half\sum_{i,j=1}^n p^i(\delta^i_j-p^j)\ddd{}{p^i}{p^j}u(p,t)\ec L_n^\ast u(p,t).
\qe
$p^i$ is the relative frequency of allele $i$; note that $p^0$ does not appear in \rf{comp0} because of the normalization $\sum_{i=0}^n p^i=1$. If one of the frequencies $p^i$ becomes 0, the corresponding coefficient also become 0. We define  the \KBE in the 
closure of the probability simplex $\Delta_n=\{(p^1,\dots ,p^n): p^i > 0, \sum_{j=1}^n p^j < 1\}$. 

Thus,  the differential operator in \eqref{eq_back_n_ext} becomes degenerate at the boundary of $\Delta_n$. (The fact that \rf{comp0} is not parabolic because time is running backward is not such a serious problem, because of the structure of the model and the duality with the -- parabolic -- \KFE .)

The Kolmogorov  equations have been studied with tools both from the  theory of stochastic processes, see for instance \cite{ethier1,ethier2,ethier3,karlin}, and  from the theory of  partial differential equations  \cite{epstein1,epstein2}. 
These  approaches are quite general and cab produce  existence, uniqueness and regularity results, but cannot come up with explicit formulas, for instance for the expected time of loss of an allele. 

Therefore, people have also looked more closely into the specific and explicit structure of the model. Of particular relevance is  the global aspect. This means that one wants to  connect the solutions in the interior of the simplex and on its boundary faces. Over the years,  several global representation formulas have been derived. For a survey, see  Section 5.10 of \cite{ewens} and  \cite{buerger}, but in order to set the stage, we want to discuss certain results in more detail and with a different focus. 

It was observed in \cite{Sato1978} that one can  write the Kolmogorov backward operator in the form
\begin{align}\label{comp1}
 \Lambda_n^\ast u(x,t) \ce \half\sum_{i,j=0}^n x^i(\delta^i_j-x^j)\ddd{}{x^i}{x^j}u(x,t),
\end{align}
using the variables $(x^0,x^1,\dots ,x^n)$ with $\sum_{j=0}^n x^j=1$  in place of $L_n^\ast u(p,t)$ (\cf equation~\eqref{comp0}) 
% \begin{align}\label{comp1a}
%  L_n^\ast u(p,t) \ce \half\sum_{i,j=1}^n p^i(\delta^i_j-p^j)\ddd{}{p^i}{p^j}u(p,t),
% \end{align}
with $(p^1,\dots ,p^n)$ and $p^0=1-\sum_{i=1}^n p^i$ implicitly determined (for our notation, see Sections \ref{sec_simplex}, in particular \rf{eq_stand_simpl} and \rf{eq_Ln*_def}), that is, one works on the simplex $\{x^0 +x^1 +\dots x^n=1, x^i\ge
0\}$, i.e., the variable $x^0$ is included. This formulation has the advantage of being symmetric w.r.t. all $x^i$, but because the operator invokes more independent variables than the dimension of the underlying space, the elliptic operator becomes degenerate. Here, we have opted to work with $L_n^\ast$, but for the comparison with the literature, we shall utilize the version \rf{comp1}. 

A starting point for much of  the literature that we shall  discuss here  is  the observation of Wright \cite{wright2} that the degeneracy at the boundary may be removed if one includes mutation. Let the mutation rate  $m_{ij}$  be the probability that  the offspring carries the mutant $j$ instead of the parent's allelel $i$; also,  $m_{ii}=-\sum_{j\ne i} m_{ij}$. This produces the  Kolmogorov backward operator
\begin{align}\label{comp2}
 \Lambda_n^\ast u(x,t) \ce
 \half\sum_{i,j=0}^n x^i(\delta^i_j-x^j)\ddd{}{x^i}{x^j}u(x,t) + \sum_{j=0}^n \sum_{i=0}^n m_{ij}x^i \frac{\partial}{\partial x^j}.
\end{align}
Calculations become  simpler, if following Wright \cite{wright2}, 
\bel{comp3}
m_{ij}=\frac{1}{2}\mu_j >0 \text{ for }i\neq j. 
\qe
This means that the mutation rates depends only on the target gene (the factor $\frac{1}{2}$ is inserted solely for purposes of normalization) and are positive.  With \rf{comp3}, \rf{comp2} becomes
\begin{align}\label{comp4}
 \Lambda_n^\ast u(x,t) \ce
 \half\sum_{i,j=0}^nx^i(\delta^i_j-x^j)\ddd{}{x^i}{x^j}u(x,t) + \half \sum_{j=0}^n (\mu_j -\sum_{i=0}^n \mu_i)x^j \frac{\partial}{\partial x^j}.
\end{align}
In this case, one obtains a unique stationary distribution for the Wright--Fisher diffusion, given by the Dirichlet distribution with parameters
$\mu_0,\dots ,\mu_n$. 
A further simplification occurs when
\bel{comp5}
\mu_0 = \dots =\mu_n =:\mu >0. 
\qe 
This means that  all mutation rates are identical. From a biological perspective, the assumption \rf{comp3} that the mutation rates only depend on the target gene is not so natural  (the mutation rate should rather depend on the initial instead of the target gene, but \rf{comp5} remedies that deficit in a certain sense), but for our purposes,  the more crucial issue is the assumption of positivity. 

Several papers have studied this model and derived explicit formulas for the transition density of the process with generator \rf{comp4} including \cite{lit-fack,Shi1977,Gri1979,Gri1980,Shi1981,Tav1984,EG1993,GS2010}; these, however, were rather of a local nature, as they did not connect solutions in the interior and in boundary strata of the domain. A useful tool is   Kingman's coalescent \cite{King1982}, the method of tracing lines of descent back into the past and analyzing their merging patterns (for a brief introduction, see also \cite{jost_bio}). In particular, some of these formulas  likewise extend to the limiting case $\mu=0$ in \rf{comp5}. Ethier--Griffiths \cite{EG1993} have  for the transition density
\bel{lit1}
P(t,x,dy)=\sum_{M \ge 1} d^0_M(t) \sum_{|\alpha|=M, \alpha \in\Z^n_+}{|\alpha| \choose \alpha}x^{\alpha}\mathrm{Dir}(\alpha, dy),
\qe
also for the case $\mu=0$. 
Previously this was known  under the assumption $\mu >0$. Here, $\mathrm{Dir}$ is the Dirichlet distribution, and $ d^0_M(t)$ is the number of equivalence classes of lines of descent of length $M$ at time $t$ in Kingman's coalescent. For the latter,  analytical formulas can be found  in \cite{Tav1984}. \rf{lit1} has been studied further in many subsequent papers, for instance \cite{GS2010}. Shimakura in \cite{Shi1981} came up with the less explicit formula
\ba
\nonumber
P(t,x,dy)&=&\sum_{m\ge 1} e^{-\lambda_m t} E_m(x,dy)\\
\nonumber
 &=& \sum_{K\in \Pi} P(t,x,y) dS_K(y)\\
\label{lit2}
 &=&\sum_{K\in \Pi} e^{-\lambda_m t} E_{m,K}(x,y)dS_K(y).
\ea
Here, the $\lambda_m$ are the eigenvalues introduced above, and $E_m$ stands for the projection onto the corresponding eigenspace, and the index $K$ enumerates the faces of the simplex. However,  the Dirichlet distribution in \rf{lit1} and the measure $dS_K(y)$ in \rf{lit2} both become singular when $y$ approaches the boundary of $K$, which means that the transition from one face into one of its boundary faces becomes singular in this scheme, considering the solutions on the individual faces invoked by the sum. Thus, in fact, \rf{lit2} is simply a decomposition into the various modes of the solutions of a linear PDE, summed over all faces of the simplex; this illustrates the rather local character of the solution scheme. 

\medskip

In the present paper, we want to get a more detailed analytical picture of the behavior at the boundary and investigate global solutions, in particular their uniqueness, on the entire state space including its stratified boundary. In an important recent work, Epstein and Mazzeo \cite{epstein1,epstein2} have developed PDE techniques to tackle the issue of solving PDEs on a manifold with corners that degenerate at the boundary with the same leading terms as the  Kolmogorov backward equation for the Wright-Fisher model~\eqref{comp0}
% \bel{comp10}
% -\dd{}{t} u(p,t) + L^*_n u(p,t)=f,%	\text{in $(\cl{\D}_n)_{-\infty}=\cl{\D}_n\times(-\infty,0)$}.
% \qe 
in the closure of the probability simplex \text{in $(\cl{\D}_n)_{-\infty}=\cl{\D}_n\times(-\infty,0)$}. This analysis is heavily based on the identification of appropriate function spaces. In our context, their spaces $C^{k,\gamma}_{WF}(\cl{\D}_n)$ would consist of $k$ times continuously differentiable functions whose $k$th derivatives are Hölder continuous with exponent $\gamma$ w.r.t. the Fisher metric. (This only holds true for $L^*_n$, although E\&M also use this construction for their generalised setting.) In terms of the Euclidean metric on the simplex, this means that a weaker Hölder exponent (essentially $\frac{\gamma}{2}$) is required in the normal than in the tangential directions at the boundary. Using this framework, they subsequently show that if the initial values are of class  $C^{k,\gamma}_{WF}(\cl{\D}_n)$, then there exists a unique solution in that class. This result is very satisfactory from the perspective of PDE theory (see e.g. \cite{jost_pde}). The solutions we consider here, however,  are not even continuous, let alone of some class $C^{0,\gamma}(\cl{\D}_n)$, as we want to study the boundary transitions.  Likewise, in our setting, the (stationary) uniqueness assertion does not apply, which E\&M have established for  regular (in particular, globally continuous) solutions by a modified version of the Hopf boundary point lemma and some maximum principle (yielding a similar, but more general result as proposition~10.2 in \cite{THJ5}).

This assessment also applies to other works which treat uniqueness issues in the context of degenerate PDEs, but are not adapted to the very specific class of solutions at hand. This includes the extensive work by Feehan \cite{fee} where -- amongst other issues -- the uniqueness of solutions of elliptic PDEs whose differential operator degenerates along a certain portion of the boundary $\bd_0\Omega$ of the domain $\Omega$ is established: For a problem with a partial Dirichlet boundary condition, \ie boundary data are only given $\bd\Omega\smin\bd_0\Omega$,  a so-called second-order boundary condition is applied for the degenerate boundary area; this is that a solution needs to be such that the leading terms of the differential operator continuously vanishes towards $\bd_0\Omega$, while the solution itself is also of class $C^1$ up to $\bd_0\Omega$. Within this framework, Feehan then shows that -- under  certain natural conditions  -- degenerate operators satisfy a corresponding maximum principle for the partial boundary condition, which assures the uniqueness of a solution. Although this in principle may also apply to solutions of \WF diffusion equations, however, this does not entirely cover the situation at hand, as, if $n\geq2$, $L^*$ does only partially degenerate towards the boundary (instances of codimension 1). More precisely, its degeneracy behaviour is stepwise, corresponding to the stratified boundary structure of the domain $\cl{\D}_n$, and hence does not satisfy the requirements for Feehan's scenario. Furthermore, in the language of \cite{fee}, the intersection of the regular and the degenerate boundary part $\bd\bd_0\Omega$, would encompass a hierarchically iterated boundary-degeneracy structure, which is beyond the scope of that work.

Therefore, in this paper, we continue the detailed investigation of the boundary behavior of solutions of the (extended) \KBE \eqref{comp0} started in \cite{THJ5}. In analytical terms, the issue is the regularity of solutions at singularities of the boundary, that is, where two or more faces of the simplex $\Delta_n$ meet.The particular extension paths from the boundary into the interior of the simplex (they have nothing to do, however, with Kingman's coalescent lines of descent as utilized in some of the literature discussed above) may result in boundary singularities at certain strata of the boundary the domain.  We are interested in the directions in which the singularities of the boundary of the simplex are approached from the interior, because our aim is to resolve these boundary singularities.

In contrast to some of the approaches discussed above that invoke strong tools from the theory of stochastic processes, our approach is not stochastic, but analytic and geometric in nature. Analytically, our approach is more related to that of \cite{epstein1,epstein2}. Geometrically, we develop   constructions, within the spirit of information geometry, that is, the geometry of probability distributions, see \cite{amari,ajls}. This will provide us with tools  that on one hand can naturally handle the  general aspects mentioned  above, but on the other hand can still derive explicit formulas. This is part of a general research program, see \cite{Dat,julian,THJ1,THJ2,THJ3,THJ4,THJ5}. 

\medskip

Let us now describe in more specific terms what we achieve in this paper. Based on the previous work \cite{THJ5}, we continue the analysis of solutions of the \textit{(extended) \KBE} for the diffusion approximation of the \WF model
\begin{align}\label{eq_back_n_ext}
\begin{cases}
L^* U(p,t)=-\dd{}{t} U(p,t) 	&\text{in ${\big(\overline{\Delta}_{n}\big)}_{-\infty}=\cl{\D}_n\times(-\infty,0)$}\\%
U (p,0) =f(p)					&\text{in $\overline{\Delta}_{n}$, $f\in {L}^2\big(\bigcup_{k=0}^n\bd_k\D_n\big)$}\\
\end{cases}
\end{align}
for $U(\,\cdot\,,t)\in C_p^2\big(\cl{\D}_n\big)$ %\cap C^0\big(\overline{\Delta}_{1}\big)$ 
for each fixed $t\in(-\infty,0)$ and $U(p,\,\cdot\,)\in C^1((-\infty,0))$ for each fixed $p\in\overline{\Delta}_{n}$ 
%and with \textit{extended final condition} $f$\sindex[not]{f@$f$}
\resp the \textit{stationary (extended) \KBE}
\begin{equation}\label{eq_back_n_stat_ext}
\begin{cases}
L^* U(p)=0 		&\text{in $\overline{\Delta}_n\smin\bd_0\D_n$}\\
U(p) = f(p)	&\text{in $\bd_0\D_n$}\\
\end{cases}
\end{equation}
for $U\in C_p^2\big(\cl{\D}_n\big)$
and in either case with
\begin{align}\label{eq_Ln*_def}
L^* u(p,t) \ce \half\sum_{i,j=1}^n\big(p^i(\delta^i_j-p^j)\big)\ddd{}{p^i}{p^j}u(p,t).
\end{align}
being the corresponding \textit{backward operator}\sindex[not]{Ln*@$L_n^*$}. % Further notation and a general introduction may be found in \cite{THJ5}; indicated references are also to \cite{THJ5}.

Emerging solutions of the backward Kolmogorov equation may be interpreted as the probability distribution over ancestral states yielding some given current state of allele frequencies with time running backward as indicated by the name. Such an ancestral state could have possessed more alleles than the current state, as on the path towards that latter state, some alleles that had been originally present in the population could have been lost. In analytical terms, one could assume that such a loss of allele event is continuous, in the sense that the relative frequency of the corresponding allele simply goes to~0. Geometrically, however, this means that the process  from the interior of a probability simplex enters to into some boundary stratum and henceforth stays there. Also, when two or more alleles got lost, they could have disappeared in different orders from the population. A corresponding global and hierarchical solution for the \KBE  that persists and stays regular across different such loss of allele events in the past was constructed in the preceding paper \cite{THJ5}, which was technically rather involved. This not been achieved before in the literature, but is indispensable for a complete understanding and a rigorous solution of the \KBE. This approach is now completed by also establishing the uniqueness for this class of solution in the stationary case, which likewise has not been considered before.  

The key is the degeneracy at the boundary of the Kolmogorov equations. While from an analytical perspective, this presents a profound difficulty for obtaining boundary regularity of the solutions of the equations, from a biological or geometric perspective, this is very natural because it corresponds to the loss of some alleles  from the population in finite time by random drift. And from a stochastic perspective, this has to happen almost surely. For this reason, the above equations are not accessible by standard theory, as perhaps the square root of the coefficients of the second order terms of $L^*$ is not Lipschitz continuous up to the boundary. As a consequence, in particular the uniqueness of solutions to the above {\KBE}s may not be derived from standard results. Instead, such degenerate equations arising from population biology have been anlyzed by Epstein and Mazzeo (\cf \cite{epstein1}, \cite{epstein2}) only recently. Contrasting their aim of developing a preferably general and broad theory, we rather focus on the some very specific aspect within this field, \ie the regularity \resp uniqueness of a certain class of functions, and use a strategy which is specifically adapted to the situation at hand.  These functions are the hierarchically extended solutions of the \KBE\index{Kolmob@\KBE} developed in \cite{THJ5}.

Our strategy here is aimed at gaining global regularity in the closure of the domain by resolving any incompatibilities between different boundary strata. 
%which are typical for the considered pathwise iteration of the extension\index{backward extension!iterated} of solutions in accordance with the extension constraints\index{extension constraints (n-dim)@extension constraints ($n$-dim)}~\ref{dfi_ext} in \cite{THJ5}.
%(The first extension step, however, does not yet cause incompatibilities.) 
That will be achieved by an appropriate transformation of the relevant part of the domain (\ie the simplex $\D_n$, \cf below) which transports the whole problem to the corresponding image domain of a product of a simplex and a cube. Simultaneously, the iteratively extended solutions\index{backward extension!iterated} 
%of proposition~\ref{prop_ext_iter} 
are turned into corresponding solutions of the transformed equation, which are then of sufficient global regularity, in particular are globally continuous. For generic iteratively extended solutions\index{backward extension!iterated}
%in accordance with the extension constraints~\ref{dfi_ext},
this does not yet yield a corresponding regularity, however, their transformation image may assumingly be extended that way, which is likewise reasonable in terms of the underlying model.

%Alternatively, such an assumption may also be based on certain reasonable properties of the underlying model as will be discussed in section~\ref{sec_comb_mod_blup}.

Then, in the stationary case 
%(corresponding to $k=0$ in proposition~\ref{prop_ext_iter})
such regularised solutions are uniquely defined by its values on the vertices of the domain (analogous to a globally continuous solution of the original problem in $\D_n$,  \cf section~10 %\ref{sec_long-term}
in \cite{THJ5}). It just needs to be shown that %the extension constraints~\ref{dfi_ext} 
there is sufficient (unique) boundary data.

\subsection*{Acknowledgements}
The research leading to these results has received funding from the European Research Council under the European Union's Seventh Framework Programme (FP7/2007-2013) / ERC grant agreement n$^\circ$~267087. J.\,H. and T.\,D.\,T. have also been supported by scholarships from the IMPRS ``Mathematics in the Sciences'' during earlier stages of this work.

\section{Notation}

\subsection{The simplex}\label{sec_simplex}

As we are considering frequencies (of alleles) in a population, this directly implies the probability simplex as the corresponding state space. In this subsection, we will recall the simplex notation from \cite{THJ4} as well as the appropriate function spaces. 

Let $p^0,p^1,\dots ,p^n$ denote the relative frequencies of alleles $0,1,\dots,n$. As we have $\sum_{j=0}^n p^j=1$ $\Leftrightarrow$ $p^0=1-\sum_{i=1}^n p^i$, this leads to an $n$-dimensional state space
\begin{align}
\D_n =\Bigset{{(p^0,\dotsc,p^n)\in\R^{n+1}\big\vert p^j > 0\text{ for }j=0,1,\dotsc,n \text{ and }\sum_{j=0}^n p^j=1}}
\end{align}
or equivalently 
\begin{align}\label{eq_stand_simpl}
\D_n\ce\Bigset{{(p^1,\dotsc,p^n)\in\R^n\big\vert p^i > 0\text{ for $i=1,\dotsc,n$ and }\sum_{i=1}^n p^i < 1}},
\end{align} which is the  (open) \textit{$n$-dimen\-sional standard orthogonal simplex}\sindex[not]{Dn@$\D_n$}.  

The closure of this simplex is 
\begin{align}\label{eq_simpl_I_n}
\cl{\Delta}_{n}=\bigset{{(p^1,\dotsc,p^n)\in\R^n\big\vert p^i \ge 0 \text{ for }i=1,\dotsc,n \text{ and }\sum_{i=1}^n p^i \le 1}}.
\end{align}
In order to include the time parameter $t\in (-\infty,0]$, we also write
\begin{equation*}
  (\D_n)_{-\infty}:=\D_n\times(-\infty,0). 
\end{equation*}

Considering the boundary of the  simplex  $\bd\D_n=\cl{\D}_n\smin\D_n$, all boundary strata, which are 
(sub-)simplices themselves, are called \textit{faces}, from the $(n-1)$-dimen\-sional \textit{facets} down to the 0-dimensional \textit{vertices}. Each subsimplex of dimension $k\leq n-1$ is isomorphic to the $k$-dimen\-sional standard orthogonal simplex $\D_k$. To denote a particular subsimplex, we introduce index sets $I_k=\{i_0,i_1,\dots ,i_k\}\subset \set{0,\dotsc,n}$ with $i_j\neq i_l$ for $j\neq l$
%\commentblue{Originally, $i_0\dotsc i_n$ was thought to correspond to an extinction path (as used for the backward extension), which may contradict an ascending ordering. Here, there seems to be no conflict, but the ordering might still be unnecessary.} 
and put 
\begin{align} 
\Delta_{k}^{(I_{k})}\ce\Bigset{{(p^1,\dotsc,p^n)\in{\overline{\Delta}_n}\big\vert p^i > 0\text{ for $i\in I_k$; }p^i=0\text{ for $i\in I_n\smin I_k$}}}.
\end{align}
The index set $I_n$ may be omitted, thus $\D_n =\Delta_{n}^{(I_{n})}$.

Each of the $\binom{n+1}{k+1}$ subsets $I_k$ of $I_n$  corresponds to a  boundary face $\D_k^{(I_k)}$ ($k\leq n-1$). The \textit{$k$-dimen\-sional part of the  boundary $\bd_k\D_n$ of $\D_n$}\sindex[not]{dkDn@$\bd_k\D_n$} is therefore
\begin{align}\label{eq_bd_k}
\bd_k\D_n^{(I_{n})}\ce \bigcup_{I_k\subset I_n}\Delta_k^{(I_k)}\subset \bd\D_n^{(I_{n})}\quad\text{for $0\leq k\leq n-1$}.
\end{align}
For notational consistency, we also put $\bd_n\D_n =\D_n$. This boundary concept can iteratively be  applied to simplices in the boundary of some $\D_l^{(I_l)}$, $I_l\subset I_n$ for $0\leq k < l\leq n$. We thus have
\begin{align}
\bd_k\D_l^{(I_l)}=\bigcup_{I_k\subset I_l}\Delta_k^{(I_k)}\subset \bd\D_l^{(I_l)}.
\end{align}

Regarding the \WF model, the simplex  $\D_k^{(\set{i_0,\dotsc,i_{k}})}$  corresponds to the state where the $k+1$  the alleles $i_0,\dotsc,i_{k}$ are  present in the population. The boundary $\bd_k\D_n$, \ie the union of all corresponding subsimplices, represents the state with any $k+1$ alleles. Specifically considering the set of alleles  $i_0,\dotsc,i_{k}$ corresponding to $\D_k^{(\set{i_0,\dotsc,i_{k}})}$, the elimination of one of the alleles corresponds to a transition to $\bd_{k-1}\D_k^{(\set{i_0,\dotsc,i_{k}})}$.

We also introduce spaces of square integrable functions for our subsequent integral products on $\D_n$ and its faces (which will mainly be used implicitly, for details \cf \cite{THJ2})\footnote{Here, $\leb_k$ stands for the $k$-dimensional Lebesgue measure, but when integrating over some ${\Delta_k^{(I_k)}}$ with $0\notin I_k$, the measure needs to be replaced with the one induced on ${\Delta_k^{(I_k)}}$ by the Lebesgue measure of the containing $\R^{k+1}$ -- this measure, however, will still be denoted by $\leb_k$\label{pag_leb_k} as it is clear from the domain of integration ${\Delta_k^{(I_k)}}$ with either $0\in I_k$ or $0\notin I_k$ which version is actually used.},
\begin{multline}\label{eq_dfi_L2_union}
%L^2(\cl{\D}_n)\equiv
L^2\Big(\bigcup_{k=0}^n\bd_k\D_n\Big)
\ce\Big\{f\co\cl{\D}_n\too\R\,\Big\vert\,\text{$f\vert_{\bd_k\D_n}$ is $\leb_k$-measurable and}\\\text{$\int_{\bd_k\D_n} \abs{f(p)}^2\,\leb_k(dp) < \infty$ for all $k=0,\dotsc,n$}\Big\}.
\end{multline}

In order to define an extended solution on $\D_n$ and its faces (indicated by a capitalised $U$), we shall in addition need appropriate spaces of pathwise regular functions.  Such a solution needs to be at least of class $C^2$  in every boundary instance (actually, a solution typically always is of class $C^\infty$, which likewise applies to each boundary instance). Moreover, it should stay regular at boundary transitions that reduce the  dimension by one, \ie for $\D_k^{(I_k)}$ and a boundary face $\D_{k-1}\subset\bd_{k-1}\D_k^{(I_k)}$. Globally, we may require that such a property applies to all possible boundary transitions within $\cl{\D}_n$ and define correspondingly for $l\in \N\cup \set{\infty}$ \sindex[not]{Cplcl@$C_p^l\big(\cl{\D}_n\big)$}
\begin{align}\label{eql_reg_pathwise}
U\in C_p^l\big(\cl{\D}_n\big)\,\ceq\,{U}\vert_{\D_d^{(I_d)}\cup \bd_{d-1}\D_d^{(I_{d})}} \in C^l(\D_d^{(I_d)} \cup \bd_{d-1}\D_d^{(I_d)})\quad
\text{for all $I_d\subset I_n$, $1\leq d \leq n$}
\end{align}
with respect to the spatial variables. Likewise, for ascending chains of (sub-)simplices with a more specific boundary condition, we put for index sets $I_k\subset\dotsc\subset I_n$ and again for $l\in \N\cup\set{\infty}$
\sindex[not]{Cplcup@$C_{p_0}^l\Big(\bigcup_{d=k}^n\D_d^{(I_d)}\Big)$}
\begin{multline}\label{eql_reg_ext}
U\in C_{p_0}^l\Big(\bigcup_{d=k}^n\D_d^{(I_d)}\Big) \,\ceq\, 
\begin{cases}
%u\vert_{\D_k^{(I_k)} } \in C_0^l\big(\D_k^{(I_k)}\big)\quad\text{if $k>0$};\\
U\vert_{\D_d^{(I_d)}}\text{ is extendable to }
\bar{U}\in C^l(\D_d^{(I_d)} \cup \bd_{d-1}\D_d^{(I_d)})\text{ with}\\
\bar{U}|_{\bd_{d-1}\D_d^{(I_d)}}=U\ind_{\D_{d-1}^{(I_{d-1})}}\ind_{\set{d>k}}\text{ for all $\max(1,k)\leq d \leq n$}
\end{cases}\hspace*{-16pt}
\end{multline}
with respect to the spatial variables.% We note that such a function may straightforwardly be completed into a function defined on the entire $\cl{\D}_n$ by putting $U\ce0$ on $\cl{\D}_n\smin\big(\bigcup_{d=k}^n\D_d^{(I_d)}\big)$ for corresponding $t$; however, such an extension is generally not of class $C_p^l\big(\cl{\D}_n\big)$ \wrt the spatial variables.

\subsection{The cube}

We furthermore introduce some notation for the appearing {cubes and their boundary instances}\index{cube}: In conjunction to the definitions for $\D_n$ in subsection~\ref{sec_simplex}, we define\sindex[not]{Sn@$\cube_n$} for $n\in\N$ an \textit{$n$-dimen\-sional cube}\index{cube}  $\cube_n$ as
\begin{align}\label{eq_def_cube}
\cube_n\ce\bigset{(p^{1},\dotsc,p^{n})\big\vert p^{i}\in(0,1)\text{ for $i=1,\dotsc,n$}}.
\end{align}
Analogous to $\D_n$, if we wish to denote the corresponding coordinate indices explicitly, this may be done by providing the coordinate index set $I_{n}^\prime\ce \set{i_1,\dotsc,i_{n}}\subset\set{1,\dotsc,n}$, $i_j\neq i_l$ for $j\neq l$ as upper index of $\cube_{n}$, thus 
\begin{align}\label{eq_cube_I_n}
\cube_{n}^{(I_n^\prime)}=\bigset{(p^{1},\dotsc,p^{n})\big\vert p^{i}\in(0,1)\text{ for $i\in I_n^\prime$}}.
\end{align}
% (the prime $\phantom{}^\prime$ indicates that the index $i_0\equiv 0$ is missing).
\sindex[not]{Inp@$I_n^\prime$}%
This is particularly useful for boundary instances of the cube (\cf below) or if for other  purposes a certain ordering $(i_j)_{j=0,\dotsc,n}$ of the coordinate indices is needed. For $\cube_n$ itself and if no ordering is needed, the index set may be omitted (in such a case it may be assumed $I_n^\prime\equiv\set{1,\dotsc,n}$ as in equation~\eqref{eq_def_cube}). % also, the dimension index~$n$ in $I_n^\prime$ may be suppressed.
Please note that a primed index set is always assumed to not contain index 0 (resp.\ $i_0=0$, which we usually stipulate in case of orderings) as the cube does not encompass a \zeroth coordinate. 

In the standard topology on $\R^n$, $\cube_{n}$ is open (which we always assume when writing~$\cube_{n}$), and its closure $\cl{\cube}_n$ is given by 
(again using the index set notation)
\begin{align}
\cl{\cube^{(I_n^\prime)}_n}=\bigset{(p^{1},\dotsc,p^{n})\big\vert p^{i}\in[0,1]\text{ for $i\in I_n^\prime$}}.
\end{align}

Similarly to the simplex, the boundary $\bd\cube_{n}$ of $\cube_{n}$ consists of various subcubes\index{cube!face} (faces) of descending dimensions, starting from the $(n-1)$-dimen\-sional {facets} down to the {vertices}\index{cube!vertex} (which represent 0-dimen\-sional cubes). All appearing subcubes of dimension $0\leq k\leq n-1$ are isomorphic to the $k$-dimen\-sional standard cube $\cube_{k}$ and hence will be denoted by $\cube_{k}$ if it is irrelevant or given by the context which subcube exactly shall be addressed. However, we may state $\cube_{k}^{(I_{k}^\prime)}$ with the index set  $I_{k}^\prime\ce \set{i_1,\dotsc,i_{k}}\subset I_n^\prime$, $i_j\neq i_l$ for $j\neq l$ stipulating that $I_{k}^\prime$ lists all~$k$ `free' coordinate indices, whereas the remaining coordinates are fixed at zero, \ie
\sindex[not]{SkIkp@$\cube_{k}^{(I_k^\prime)}$}
\begin{align} 
\cube_{k}^{(I_{k}^\prime)}\ce\bigset{(p^{1},\dotsc,p^{n})\big\vert p^i \in (0,1)\text{ for $i\in I_k^\prime$; }p^i=0\text{ for $i\in I_n^\prime\smin I_k^\prime$}}
\end{align}
down until $\cube_{0}^{(\varnothing)}\ce(0,\dotsc,0)$ for $k=0$. 

%$I_k^\prime^\prime$ determines the coordinates with positive value; $0\notin I_{k}^\prime$ signifies that the condition $\sum_{i=1}^n x^i=1$ applies. We also write $\cube_{k}^{(i_0,\dotsc,i_{k})}$ % or just  $\cube_{k}^{(i_{k})}$
For a given~$k$, there are of course $\binom{n}{k}$ different (unordered) subsets $I_k^\prime$ of $I_n^\prime$, each of which corresponds to a certain boundary face $\cube_{k}^{(I_k^\prime)}$. Moreover, for each subset $I_k^\prime$ with~$k$ elements, altogether $2^{(n-k)}$ subcubes of dimension~$k$ exist in $\bd\cube_n$, which are isomorphic to $\cube_{k}^{(I_{k}^\prime)}$ (including $\cube_{k}^{(I_{k}^\prime)}$) depending on the (respectively fixed) values of the coordinates with indices not in $I_k^\prime$. Thus, if necessary, we may rather state a certain boundary face $\cube_k$ of $\bd\cube_n$ for $0\leq k\leq n-1$ by only giving the values of the $n-k$ fixed coordinates, \ie with indices in $I_n^\prime\smin I_k^\prime$, which may be either 0 or 1, hence
%\begin{align}
% \cube_{k}^{(I_{k}^\prime)}\times\set{(0,1,\dotsc,1)}^{(I_{n}\smin I_{k})}.
%\end{align}
\begin{align}
\cube_{k}=\bigset{p^{j_1}=b_{1},\dotsc,p^{j_{n-k}}=b_{n-k}}
\end{align}
with $j_1,\dotsc,j_{n-k}\in I^\prime_n$, $i_r\neq i_s$ for $r\neq s$ and $b_{1},\dotsc,b_{n-k}\in\set{0,1}$ chosen accordingly. %As then $I_n^\prime$ is already given implicitly, we may abstain from denoting it. 
In particular for dimension $n-1$, it is noted that we have $n-1$ faces, which each appear twice; in zero dimension, there are $2^n$ vertices. If we wish to indicate the total \textit{$k$-dimen\-sional boundary of $\cube_n$}\index{cube!k-dimen\-sional boundary@$k$-dimen\-sional boundary}, \ie the union of all $k$-dimen\-sional faces belonging to $\cl{\cube}_n$, we may write $\bd_k\cube_n$ for $k=0,\dotsc,n$ with analogously $\bd_n\cube_n\ce\cube_n$.
\sindex[not]{dkS@$\bd_k\cube_n$}

Lastly, when writing products of simplex and cube which do not span all considered dimensions, we indicate the value of the missing coordinates by curly brackets marked with the corresponding coordinate index, \ie for $I_n=\set{i_0,i_1,\dotsc,i_n}$ and $I_k\subset I_n$ with $i_{k+1}\notin I_k$ we have \eg
\begin{align}\notag
&\D_k^{(I_k)}\times\set{1}^{(\set{i_{k+1}})}\times\cube_{n-k-1}^{(I_n^\prime\smin(I_k^\prime\cup\set{i_{k+1}}))}\\
&\hspace*{9pt}\ce\bigset{(p^{i_1},\dotsc,p^{i_n})\big\vert p^i>0\text{ for $i\in I_k$},p^{i_{k+1}}=1,p^j\in(0,1)\text{ for $j\in I_n^\prime\smin (I_k^\prime\cup\set{i_{k+1}})$}}
\end{align}
with $p^{i_0}=p^0=1-\sum_{j=1}^{k}p^{i_j}$. If coordinates are fixed at 0, the corresponding entry may be omitted, \eg we may just write $\D_k^{(I_k)}$ for $\D_k^{(I_k)}\times\set{0}^{(I_n\smin I_k)}$.

Furthermore, we also introduce a (closed) cube $\cl{\cube_k^{(I^\prime_k)}}$ with a removed base vertex $\cube_0^{(\varnothing)}$ somewhat inexactly denoted by $\cl{\cubex_k^{(I^\prime_k)}}$, \sindex[not]{Sx@$\cl{\cubex_k^{(I^\prime_k)}}$} \ie 
\begin{align}
\cl{\cubex_k^{(I^\prime_k)}}\ce\cl{\cube_k^{(I^\prime_k)}}\smin\cube_0^{(\varnothing)}=\Bigset{p^{i_1},\dotsc,p^{i_k}\in [0,1]\Big\vert\sum_{j=1}^k p^{i_j}>0}.
\end{align}

For functions defined on the cube, the pathwise smoothness required  for an application of the yet to be introduced corresponding Kolmogorov backward operator\index{backward operator} (\cf p.~\pageref{lem_blowup_trans_op}) may be defined as with the simplex in equality~\eqref{eql_reg_pathwise} in \cite{THJ5}; 
hence, we put \sindex[not]{Cplcl@$C_p^l(\cl{\cube}_n)$}
\begin{align}
\tilde{u}\in C_p^l(\cl{\cube}_n)\,\ceq\,{\tilde{u}}\vert_{\cube_d\cup \bd_{d-1}\cube_d} \in C^l(\cube_d \cup \bd_{d-1}\cube_d)\text{ for every $\cube_d\subset\cl{\cube}_n$}
\end{align}
with respect to the spatial variables, implying that the operator is continuous at all boundary transitions within $\cl{\cube}_n$. This concept likewise applies to subsets of $\cl{\cube}_n$ where needed.
%  
%  
% hence, for a function $\tilde{u}$ and ascending chains of index sets $I^\prime_k\subset\dotsc\subset I^\prime_n$ and $l\in \N\cup \set{\infty}$ we put
% \sindex[not]{Cplcups@$C_{p_0}^l\Big(\bigcup_{d=k}^n\cube_d^{(I^\prime_d)}\Big)$}
% \begin{multline}
% \tilde{u}\in C_{p_0}^l\Big(\bigcup_{d=k}^n\cube_d^{(I^\prime_d)}\Big) \,\ceq\, 
% \begin{cases}
% \tilde{u}\vert_{\cube_k^{(I^\prime_k)} } \in C_0^l\big(\cube_k^{(I^\prime_k)}\big)\quad\text{if $k>0$};\\
% \tilde{u}\vert_{\cube_d^{(I^\prime_d)}\cup \cube_{d-1}^{(I^\prime_{d-1})}}\text{ is smoothly extendable to }\\
% \quad\bar{\tilde{u}}\in C^l(\cube_d^{(I^\prime_d)} \cup \bd_{d-1}\cube_d^{(I^\prime_d)})\text{ for all $k+1\leq d \leq n$}
% \end{cases}
% \end{multline}
% with respect to the spatial variables implying that the operator is continuous at all boundary transitions along a nested chain of boundary instances. 

\section{Hierarchical extended solutions of the \KBE}

In this section, we will provide the principle results from~\cite{THJ5}; for details please see also there. The class of extensions in consideration is confined by the extension constraints (definition 6.1 in \cite{THJ4}):

\begin{dfi}[extension constraints]\label{dfi_ext}
Let $I_d$ be an index set with $\abs{I_d}=d+1\geq 2$, $0,s\in I_d$ and $\D_d^{(I_d)}=\set{(p^i)_{i\in I_d\setminus\set{0}}\vert p^i>0\text{ for $i\in I_d$}}$  with $p^0\ce 1-\sum_{i\in I_d\setminus\set{0}}p^i$. For $d\geq2$ and a solution $u\co \big({\D_{d-1}^{(I_d\setminus\set{s})}}\big)_{-\infty}\too \R$ of the correspondingly restricted \KBE~\eqref{eq_back_n_ext}, 
\ie ${u}(\fdt,t)\in C^\infty\big(\D_{d-1}^{(I_d\smin\set{s})}\big)$ for $t<0$,  $u(p,\,\cdot\,)\in C^\infty((-\infty,0))$ for $p\in\D_{d-1}^{(I_d\smin\set{s})}$ and
\begin{align}
-\dd{}{t}u= L^*u
\quad\text{in $\big({\D_{d-1}^{(I_d\smin\set{s})}}\big)_{-\infty}$},
\end{align}
a function $\bar{u}\co \big({\D_{d}^{(I_d)}}\big)_{-\infty}\too \R$ with $\bar{u}(\fdt,t)\in C^\infty\big(\D_d^{(I_d)}\big)$ for $t<0$ and $\bar{u}(p,\,\cdot\,)\in C^\infty((-\infty,0))$ for $p\in\D_{d}^{(I_d)}$
is said to be an extension of $u$ in accordance with the \textit{extension constraints} if
\begin{itemize}
 \item[(i)]{for~$t<0$ $\bar{u}(\fdt,t)$ is continuously extendable to the boundary $\bd_{d-1}\D_d^{(I_d)}$ such that it coincides with $u(\fdt,t)$ in $\D_{d-1}^{(I_d\setminus\set{s})}$ \resp vanishes on the remainder of $\bd_{d-1}\D_d^{(I_d)}$ and is of class $C^\infty$ with respect to the spatial variables in $\D_d^{(I_d)}\cup\bd_{d-1}\D_d^{(I_d)}$,}
 \item[(ii)]{it is a solution of the corresponding \KBE in $\big({\D_d^{(I_d)}}\big)_{\hspace*{-2pt}-\infty}$\hspace*{-2pt}, \ie $-\dd{}{t}\bar{u}= L^*\bar{u}$ in $\big({\D_d^{(I_d)}}\big)_{-\infty}$.}
\end{itemize}
%For $d=1$, this analogously applies with $L^*_0=0$ and consequently the equation in condition~(i) replaced with $L^*\bar{u}=0$.
For $d=1$, this analogously applies to functions $u$ with $-\dd{}{t}u=0$ (in accordance with  $L^*_0\equiv 0$), and consequently the equation in condition~(ii) is replaced with $L^*\bar{u}=0$. 
%If a solution is time-independent, all statements hold correspondingly with the $t$-derivative terms replaced with $0$. 
Furthermore, an extension which encompasses multiple extension steps is said to be in accordance with the {extension constraints}, if this holds for every extension step.
\end{dfi}

The presented extension scheme then first yields the extensions of single extensions of solutions from a boundary instance of the considered domain to the interior (proposition 6.4 in \cite{THJ5}), from which one can advance to the existence of pathwise extensions (proposition 8.1 in \cite{THJ5}):

\begin{prop}[pathwise extension of solutions]\label{prop_ext_iter}
Let $k,n\in\N$ with $0\leq k<n$, $\set{i_k,i_{k+1},\dotsc,i_n}\subset I_n\ce\set{0,1,\dotsc,n}$ with $i_i\neq i_j$ for $i\neq j$ and $I_k\ce I_n\setminus\set{i_{k+1},\dotsc,i_n}$, and let $u_{I_k}$ \sindex[not]{uik@$u_{I_k}$} be a proper solution of the \KBE \eqref{eq_back_n_ext} restricted to ${\D_k^{(I_k)}}$ for some final condition $ f\in \L^2\big({\D_k^{(I_k)}}\big)$. % as  in proposition~\ref{prop_sol_back_n}. %with $u(\fdt,t)\in C_0^\infty\big({\D_k^{(I_k)}}\big)$ for all $t<0$.
For $d=k+1,\dotsc,n$ and $I_{d}\ce I_k \cup \set{i_{k+1},\dotsc i_{d}}$, an extension of $\bar{u}_{I_k}^{i_k,\dotsc,i_{d-1}}$ in $\big(\D_{d-1}^{(I_{d-1})}\big)_{-\infty}$ to  $\bar{u}_{I_k}^{i_k,\dotsc,i_{d}}\ce{\big(\bar{u}_{I_k}^{i_k,\dotsc,i_{d-1}}\big)}^{i_{d-1},i_d}$ in $\big(\D_d^{(I_{d})}\big)_{-\infty}$ as by proposition 6.4 in \cite{THJ5} is in accordance with the extension constraints~\ref{dfi_ext} if (and for $d\geq k+2$ and $[f]\neq0$ in $L^2\big({\D_k^{(I_k)}}\big)$ also only if) putting $r(d)=i_{d-1}$ for the extension target face index, and we respectively have 
% \begin{prop}[pathwise extension]%\label{prop_ext_iter}
% Let $k,n\in\N$ with $0\leq k<n$, $\set{i_k,i_{k+1},\dotsc,i_n}\subset I_n\ce\set{0,1,\dotsc,n}$ with $i_i\neq i_j$ for $i\neq j$ and $I_k\ce\set{0,1,\dotsc,n}\setminus\set{i_{k+1},\dotsc,i_n}$ and let $u_{I_k}$ be a proper solution of the \KBE \eqref{eq_back_n_ext} in ${\D_k^{(I_k)}}$ for some final condition $ f\in \L^2\big({\D_k^{(I_k)}}\big)$ as  in proposition~\ref{prop_sol_back_n}. %with $u(\fdt,t)\in C_0^\infty\big({\D_k^{(I_k)}}\big)$ for all $t<0$.
% For $d=k+1,\dotsc,n$ and $I_{d}\ce I_k \cup \set{i_{k+1},\dotsc i_{d}}$ and for all $t\leq0$ let $\bar{u}_{I_k}^{i_k,\dotsc,i_{d}}(\fdt,t)$ be an extension via proposition 6.4 in \cite{THJ5} of $\bar{u}_{I_k}^{i_k,\dotsc,i_{d-1}}(\fdt,t)$ in $\D_{d-1}^{(I_{d-1})}$ to $\D_d^{(I_{d})}$ with 
% extension target face index $r(d)=i_{d-1}$. %and put $\bar{u}_{d}(\fdt,t)$.
%
%Then the extension in every extension step is in accordance with the extension constraints~\ref{dfi_ext} and we respectively have 
% \begin{align}\label{eq_iter_ext_bd}
% \bar{u}_{I_k}^{i_k,\dotsc,i_{d}}(\fdt,t)|_{\D_{d-1}^{(I_{d-1})}}=\bar{u}_{I_k}^{i_k,\dotsc,i_{d-1}}\quad\text{and}\quad
% \bar{u}_{I_k}^{i_k,\dotsc,i_{d}}|_{\bd_{d-1}\D_d^{(I_{d})}\smin\D_{d-1}^{(I_{d-1})}}=0
% \end{align}
% for all $t<0$ and is given by 
\sindex[not]{uikid@$\bar{u}_{I_k}^{i_k,\dotsc,i_{d}}$}\sindex[not]{pikid@$\pi^{i_k,\dotsc,i_d}$}
\begin{align}\label{eq_iter_ext}
\bar{u}_{I_k}^{i_k,\dotsc,i_{d}}(p,t)=
u_{I_k}(\pi^{i_k,\dotsc,i_d}(p),t)\prod_{j=k}^{d-1} \frac{p^{i_j}}{\sum_{l=j}^d p^{i_l}},\quad (p,t)\in\big(\D_d^{(I_d)}\big)_{-\infty}
\end{align}
with $p^0=1-\sum_{i\in I_d\setminus\set{0}}p^i$ and $\pi^{i_k,\dotsc,i_d}(p)=(\tilde{p}^1,\dotsc,\tilde{p}^n)$
such that $\tilde{p}^{i_k}=p^{i_k}+\dotso+p^{i_d}$, $\tilde{p}^{i_{k+1}}=\dotso=\tilde{p}^{i_{d}}=0$ and $\tilde{p}^j=p^j$ for $j\in I_d\smin\set{i_k,\dotsc,i_d}$.

%additionally, all spatial derivatives in $\bd_{d-1}{\D_d^{(I_{d})}}$ are continuously matched by $\bar{u}_{I_k}^{i_k,\dotsc,i_{d}}$.

Correspondingly, the resulting assembling of all extensions to a function  $\bar{U}_{I_k}^{i_k,\dotsc,i_{n}}$ in $\Big(\bigcup_{ k \leq d\leq n}\D_d^{(I_d)}\Big)_{-\infty}$
%is defined on the entire ${\D_n^{(I_n)}}$ and the entire
by putting\sindex[not]{Uikid@${U}_{I_k}^{i_k,\dotsc,i_{n}}$}
\begin{multline}\label{eq_path_ext}
\bar{U}_{I_k}^{i_k,\dotsc,i_{n}}(p,t) \ce u_{I_k}(p,t)\ind_{\D_k^{(I_k)}}(p)+\sum_{ k+1\leq d\leq n}\bar{u}_{I_k}^{i_k,\dotsc,i_{d}}(p,t)\ind_{\D_d^{(I_d)}}(p)\\
=u_{I_k}(p,t)\ind_{\D_k^{(I_k)}}(p)+\sum_{ k+1\leq d\leq n} u_{I_k}(\pi^{i_k,\dotsc,i_d}(p),t)\prod_{j=k}^{d-1} \frac{p^{i_j}}{\sum_{l=j}^d p^{i_l}} \ind_{\D_d^{(I_d)}}(p)%\quad\text{in $\Big(\bigcup_{\substack{I_d\subset I_n\\ k\leq d\leq n}}\D_d^{(I_d)}\Big)_{-\infty}$}
\end{multline}
with $p^0=1-\sum_{i\in I_n\setminus\set{0}}p^i$ is in $C_{p_0}^\infty\Big(\bigcup_{ k \leq d\leq n}\D_d^{(I_d)}\Big)$ with respect to the spatial variables for $t<0$ % and for every $d=k,\dotsc,n$
as well as in $C^\infty((-\infty,0))$ with respect to $t$, and we have
\begin{align}
\begin{cases}\label{eq_iter_ext_sol}
L^*\bar{U}_{I_k}^{i_k,\dotsc,i_{n}}=-\dd{}{t}\bar{U}_{I_k}^{i_k,\dotsc,i_{n}}
&\text{in $\Big(\bigcup_{ k \leq d\leq n}\D_d^{(I_d)}\Big)_{-\infty}$}\\
\bar{U}_{I_k}^{i_k,\dotsc,i_{n}}(\fdt,0)=\bar{F}_{I_k}^{i_k,\dotsc,i_{n}}
&\text{in $\bigcup_{ k \leq d\leq n}\D_d^{(I_d)}$}
\end{cases}
\end{align}
with $\bar{F}_{I_k}^{i_k,\dotsc,i_{n}}\in \L^2\Big(\bigcup_{ k \leq d\leq n}\D_d^{(I_d)}\Big)$ being an analogous extension of the final condition $f=f_{I_k}$ in $\D_k^{(I_k)}$; in particular, we have $\bar{U}_{I_k}^{i_k,\dotsc,i_{n}}\big|_{{\D_k^{(I_k)}}}(\fdt,0)=f$ in~${{\D_k^{(I_k)}}}$.
\end{prop}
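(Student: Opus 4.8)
The plan is to prove the proposition by induction on $d$, reducing the whole statement to the single-extension-step result (proposition~6.4 in \cite{THJ5}) together with one elementary composition identity for the projections $\pi^{i_k,\dotsc,i_d}$. For $d=k$ there is nothing to show ($\bar u_{I_k}^{i_k}\ce u_{I_k}$), and for $d=k+1$ the claimed formula~\eqref{eq_iter_ext} reads $\bar u_{I_k}^{i_k,i_{k+1}}(p,t)=u_{I_k}(\pi^{i_k,i_{k+1}}(p),t)\,p^{i_k}/(p^{i_k}+p^{i_{k+1}})$, which is precisely the extension produced by proposition~6.4 in \cite{THJ5}; that proposition already guarantees accordance with the extension constraints~\ref{dfi_ext} and, when $[f]\neq0$, that this is the only admissible extension. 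I would then take as inductive hypothesis that for some level $d-1\geq k+1$ the function $\bar u_{I_k}^{i_k,\dotsc,i_{d-1}}$ is an admissible (extension-constraint-compatible) solution of the \KBE on $\D_{d-1}^{(I_{d-1})}$ given by the product formula~\eqref{eq_iter_ext}.

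\noindent\textbf{The inductive step.} Let $\sigma_d\co\D_d^{(I_d)}\too\D_{d-1}^{(I_{d-1})}$ denote the single-step projection collapsing the coordinate $i_d$ onto $r(d)=i_{d-1}$, \ie $\sigma_d(p)^{i_{d-1}}=p^{i_{d-1}}+p^{i_d}$, $\sigma_d(p)^{i_d}=0$, and $\sigma_d(p)^j=p^j$ otherwise. Proposition~6.4 in \cite{THJ5}, applied to $\bar u_{I_k}^{i_k,\dotsc,i_{d-1}}$ with extension target index $i_{d-1}$, produces
\[
\bar u_{I_k}^{i_k,\dotsc,i_d}(p,t)=\bar u_{I_k}^{i_k,\dotsc,i_{d-1}}\big(\sigma_d(p),t\big)\cdot\frac{p^{i_{d-1}}}{p^{i_{d-1}}+p^{i_d}},\qquad(p,t)\in\big(\D_d^{(I_d)}\big)_{-\infty},
\]
and asserts that this step respects the extension constraints. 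Since $\sigma_d(p)^{i_j}=p^{i_j}$ for $k\leq j\leq d-2$, one checks directly that $\pi^{i_k,\dotsc,i_{d-1}}\circ\sigma_d=\pi^{i_k,\dotsc,i_d}$ and that $\sum_{l=j}^{d-1}\sigma_d(p)^{i_l}=\sum_{l=j}^{d}p^{i_l}$ for every $k\leq j\leq d-1$. Substituting the inductive formula for $\bar u_{I_k}^{i_k,\dotsc,i_{d-1}}$ and using these two identities turns the product $\prod_{j=k}^{d-2}p^{i_j}\big/\sum_{l=j}^{d-1}p^{i_l}$ into $\prod_{j=k}^{d-2}p^{i_j}\big/\sum_{l=j}^{d}p^{i_l}$, and the extra factor equals $p^{i_{d-1}}\big/\sum_{l=d-1}^{d}p^{i_l}$, so that altogether~\eqref{eq_iter_ext} is recovered at level $d$. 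Hence~\eqref{eq_iter_ext} at level $d$ is exactly the proposition-6.4 extension of~\eqref{eq_iter_ext} at level $d-1$, and by the last sentence of definition~\ref{dfi_ext} the multi-step extension~\eqref{eq_iter_ext} is in accordance with the extension constraints -- this is the ``if''. For the ``only if'' when $d\geq k+2$ and $[f]\neq0$ I would invoke the uniqueness assertion of proposition~6.4 in \cite{THJ5}: as $[f]\neq0$ the function $\bar u_{I_k}^{i_k,\dotsc,i_{d-1}}$ being extended does not vanish identically (its product factors are strictly positive on the open simplex), so the admissible extension of it is unique and must coincide with~\eqref{eq_iter_ext}.

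\noindent\textbf{Assembling the global solution.} By construction $\bar U_{I_k}^{i_k,\dotsc,i_n}$ of~\eqref{eq_path_ext} restricts to $\bar u_{I_k}^{i_k,\dotsc,i_d}$ on each stratum $\D_d^{(I_d)}$, so I would read off its properties stratum by stratum. Condition~(i) of the extension constraints~\ref{dfi_ext}, imposed at every level $k+1\leq d\leq n$, states exactly that $\bar u_{I_k}^{i_k,\dotsc,i_d}$ extends $C^\infty$ in the spatial variables to $\D_d^{(I_d)}\cup\bd_{d-1}\D_d^{(I_d)}$, agreeing there with $\bar U_{I_k}^{i_k,\dotsc,i_n}\big|_{\D_{d-1}^{(I_{d-1})}}$ on the face $\D_{d-1}^{(I_{d-1})}$ where $p^{i_d}=0$ and vanishing on the remaining faces of $\bd_{d-1}\D_d^{(I_d)}$; this is exactly the defining condition~\eqref{eql_reg_ext} of $C_{p_0}^\infty\big(\bigcup_{k\leq d\leq n}\D_d^{(I_d)}\big)$, while $C^\infty$-dependence on $t\in(-\infty,0)$ is inherited pointwise. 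Condition~(ii) gives $L^*\bar u_{I_k}^{i_k,\dotsc,i_d}=-\dd{}{t}\bar u_{I_k}^{i_k,\dotsc,i_d}$ on each $\big(\D_d^{(I_d)}\big)_{-\infty}$; since $L^*$ and $\dd{}{t}$ are local and the strata are disjoint, \eqref{eq_iter_ext_sol} follows on the whole union. Finally I would set $\bar F_{I_k}^{i_k,\dotsc,i_n}\ce\lim_{t\uparrow0}\bar U_{I_k}^{i_k,\dotsc,i_n}(\fdt,t)$; on $\D_k^{(I_k)}$ it equals $u_{I_k}(\fdt,0)=f$, and on each $\D_d^{(I_d)}$ the product factors are bounded by $1$, so $\big|\bar F_{I_k}^{i_k,\dotsc,i_n}\big|\leq\big|f\circ\pi^{i_k,\dotsc,i_d}\big|$, and a Fubini argument over the uniformly bounded-measure fibres of $\pi^{i_k,\dotsc,i_d}$ shows $\bar F_{I_k}^{i_k,\dotsc,i_n}\in\L^2\big(\bigcup_{k\leq d\leq n}\D_d^{(I_d)}\big)$ whenever $f\in\L^2\big(\D_k^{(I_k)}\big)$.

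\noindent\textbf{Main obstacle.} The genuinely hard analytic work -- constructing a single boundary-to-interior extension, proving its $C^\infty$-regularity up to the codimension-one boundary, and establishing uniqueness -- is already packaged in proposition~6.4 of \cite{THJ5}, so the present proof is essentially induction plus bookkeeping. Accordingly the parts that need real care are: (a) the combinatorics showing that iterating the single-step formula $d-k$ times collapses to the closed product form~\eqref{eq_iter_ext} with the correctly nested denominators and the composed projection $\pi^{i_k,\dotsc,i_d}$ -- the two identities $\pi^{i_k,\dotsc,i_{d-1}}\circ\sigma_d=\pi^{i_k,\dotsc,i_d}$ and $\sum_{l=j}^{d-1}\sigma_d(p)^{i_l}=\sum_{l=j}^{d}p^{i_l}$ being the crux; and (b) verifying membership in the \emph{global} class $C_{p_0}^\infty$, which forces one to check the extension-constraint compatibility -- including the smooth vanishing on the non-principal faces -- at every transition $\D_d^{(I_d)}\to\bd_{d-1}\D_d^{(I_d)}$ in the chain, not only at the top level, together with the $\L^2$-bound for the assembled final datum.
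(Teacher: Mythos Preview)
The paper does not actually prove this proposition. Section~3 explicitly states that it is ``providing the principal results from \cite{THJ5}; for details please see also there'', and this proposition is labelled as proposition~8.1 in \cite{THJ5} and stated without proof. So there is no proof in the present paper to compare against.

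That said, your inductive argument is the natural one, and its structure is already dictated by the statement itself: the very definition $\bar u_{I_k}^{i_k,\dotsc,i_d}\ce(\bar u_{I_k}^{i_k,\dotsc,i_{d-1}})^{i_{d-1},i_d}$ via proposition~6.4 of \cite{THJ5} forces the induction, and your two composition identities $\pi^{i_k,\dotsc,i_{d-1}}\circ\sigma_d=\pi^{i_k,\dotsc,i_d}$ and $\sum_{l=j}^{d-1}\sigma_d(p)^{i_l}=\sum_{l=j}^{d}p^{i_l}$ are exactly what is needed to collapse the iterated single-step formula into the closed product~\eqref{eq_iter_ext}. The assembly into $\bar U_{I_k}^{i_k,\dotsc,i_n}$ and the verification of $C_{p_0}^\infty$-regularity via condition~(i) of definition~\ref{dfi_ext} at each level is correct.

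One point that could be sharpened: for the ``only if'' direction you invoke uniqueness of the admissible extension from proposition~6.4, but the statement here is specifically about the \emph{choice of target index} $r(d)=i_{d-1}$. The argument you need is that any other choice $r(d)=i_m$ with $m<d-1$ would, by the single-step formula, produce an extension whose continuous boundary limit on the face $\{p^{i_d}=0\}=\D_{d-1}^{(I_{d-1})}$ fails to match $\bar u_{I_k}^{i_k,\dotsc,i_{d-1}}$ (or fails to vanish on the appropriate off-path faces), violating condition~(i) of definition~\ref{dfi_ext}; the hypothesis $[f]\neq0$ is what prevents the trivial escape route where everything vanishes. Your remark that $\bar u_{I_k}^{i_k,\dotsc,i_{d-1}}\not\equiv0$ because the product factors are strictly positive on the open simplex is the right observation to make this work, but the connection to the target-index choice should be made explicit rather than folded into a general appeal to uniqueness.
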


From this, a corresponding existence result by Littler (\cf \cite{littler-good_ages}) may be reconstructed. Assembling all pathwise extensions eventually yields the existence of the global extensions (proposition 8.4 in \cite{THJ5}). At last, a solution scheme for the extended \KBE based on the global iterative extensions is presented, yielding the following existence result (theorem 9.1 in \cite{THJ5}):
\begin{thm}\label{thm_sol_back_n_ext}
For a given final condition $f\in\L^2\big(\bigcup_{d=0}^n\bd_d\D_n\big)$,  the extended \KBE\eqref{eq_back_n_ext} corresponding to the $n$-dimen\-sional \WF model  in diffusion approximation  always allows a solution 
$\bar{U}\co{\big(\overline{\Delta}_{n}\big)}_{-\infty}\map\R$ with $\bar{U}(\,\cdot\,,t)\in C_p^\infty\big(\cl{\D}_n\big)$ for each fixed $t\in(-\infty,0)$  
%$u(\,\cdot\,,t)\vert_{\bd_d\D_n}\in C^\infty(\bd_d\D_n)$ %\cap C^0\big(\overline{\Delta}_{1}\big)$  for all $d=0,\dotsc,n$ 
and $\bar{U}(p,\,\cdot\,)\in C^\infty((-\infty,0))$ for each fixed $p\in\overline{\Delta}_{n}$.
\end{thm}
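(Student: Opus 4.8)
The plan is to construct $\bar{U}$ stratum by stratum, from the vertices of $\cl{\D}_n$ upward, combining the single-face solvability of the restricted \KBE (\cf\cite{THJ2}) with the explicit pathwise extension formula of Proposition~\ref{prop_ext_iter}. Write $\cl{\D}_n$ as the disjoint union $\bigsqcup_{k=0}^{n}\bigsqcup_{I_k\subset I_n}\D_k^{(I_k)}$ of its open faces and decompose $f=\sum_{k,I_k} f\ind_{\D_k^{(I_k)}}$ accordingly. On each face $\D_k^{(I_k)}$ the time-$0$ trace of $\bar{U}$ should equal $f\vert_{\D_k^{(I_k)}}$; however, the extensions emanating from the subfaces $\D_j^{(I_j)}$ with $I_j\subsetneq I_k$ already contribute a (generically nonzero) trace there -- by~\eqref{eq_iter_ext} the value is $u_{I_j}$ evaluated at a linear projection, multiplied by bounded weight factors -- so the final datum on each face must first be corrected by subtracting those contributions.

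Concretely, I would argue by induction on $k$. For $k=0$ and a vertex $\D_0^{(\set{i})}$, since $L^*_0\equiv0$ the solution $u_{\set{i}}$ is the constant value of $f$ at that vertex. Assume solutions $u_{I_j}$ and their pathwise extensions have been built on all faces of dimension $j<k$. On a $k$-face $\D_k^{(I_k)}$ set $f_{I_k}\ce f\vert_{\D_k^{(I_k)}}$ minus the sum of the time-$0$ traces on $\D_k^{(I_k)}$ of the already-constructed extensions of the $u_{I_j}$, $I_j\subsetneq I_k$; a change-of-variables estimate (the projection $\pi^{i_j,\dotsc}$ is linear, the product weights in~\eqref{eq_iter_ext} are bounded) shows $f_{I_k}\in\L^2\big(\D_k^{(I_k)}\big)$, whence a proper solution $u_{I_k}$ of the \KBE restricted to $\D_k^{(I_k)}$ with final datum $f_{I_k}$ exists, and is $C^\infty$ in $\D_k^{(I_k)}$ and in $t$ by interior parabolic regularity of the restricted equation. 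Applying Proposition~\ref{prop_ext_iter} to each $u_{I_k}$ and assembling the resulting pathwise extensions $\bar{U}_{I_k}^{i_k,\dotsc,i_n}$ over all pairs $(k,I_k)$ as in proposition~8.4 and theorem~9.1 of~\cite{THJ5} yields the candidate $\bar{U}$ on $\big(\cl{\D}_n\big)_{-\infty}$.

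It then remains to verify the asserted properties. On an open face $\D_d^{(I_d)}$ one has $\bar{U}\vert_{\D_d^{(I_d)}}=u_{I_d}$ plus the extension contributions of the $u_{I_j}$ with $I_j\subsetneq I_d$, each summand solving the restricted \KBE there by~\eqref{eq_iter_ext_sol}; by linearity $L^*\bar{U}=-\dd{}{t}\bar{U}$ holds on every stratum. For the global pathwise regularity one uses that each individual extension $\bar{u}_{I_k}^{i_k,\dotsc,i_d}$ extends $C^\infty$-smoothly to $\bd_{d-1}\D_d^{(I_d)}$ (Proposition~\ref{prop_ext_iter}) and vanishes there except on $\D_{d-1}^{(I_{d-1})}$, where it matches the preceding extension step, so the summands glue along every dimension-dropping boundary transition; hence $\bar{U}(\fdt,t)\in C_p^\infty\big(\cl{\D}_n\big)$ for $t<0$, while $\bar{U}(p,\fdt)\in C^\infty((-\infty,0))$ follows from the corresponding property of each $u_{I_k}$ and of the weight factors. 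Finally, the recursive choice of the $f_{I_k}$ makes the time-$0$ traces telescope, so that on each $\D_d^{(I_d)}$ the trace of $\bar{U}$ equals $f\vert_{\D_d^{(I_d)}}$, i.e.\ $\bar{U}(\fdt,0)=f$ on $\bigcup_{d=0}^{n}\bd_d\D_n$.

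The main obstacle is this assembly step together with its bookkeeping: one must check that the recursively corrected data $f_{I_k}$ really stay in $\L^2$, and -- more seriously -- that the order-dependent contributions produced by Proposition~\ref{prop_ext_iter} on the intermediate strata combine so that the \KBE, the pathwise $C^\infty$-gluing across \emph{all} dimension drops, and the prescribed final trace hold simultaneously. This is precisely the content of the global-extension construction of~\cite{THJ5} (propositions~8.1 and~8.4) and is the technically involved part referred to in the introduction; the remaining ingredients are just the explicit formula~\eqref{eq_iter_ext}, linearity, and interior regularity for the restricted equation.
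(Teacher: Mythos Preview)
The paper does not prove this theorem here; it is quoted verbatim as theorem~9.1 of \cite{THJ5}, and section~3 merely summarises the construction from that reference. Your sketch reproduces exactly that construction: solve the restricted \KBE on each open face with recursively corrected final data, extend pathwise via Proposition~\ref{prop_ext_iter}, and assemble via the global extension of proposition~8.4 in \cite{THJ5}. So you are aligned with the paper's (outsourced) proof, and you have correctly identified the assembly and bookkeeping of propositions~8.1 and~8.4 in \cite{THJ5} as the substantive step.

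One caution on your bookkeeping outline: a single pathwise extension $\bar{U}_{I_j}^{i_j,\dotsc,i_n}$ from Proposition~\ref{prop_ext_iter} lives only on the chain $\D_j^{(I_j)}\subset\D_{j+1}^{(I_{j+1})}\subset\dotsb\subset\D_n^{(I_n)}$ determined by the chosen ordering, not on every face containing $I_j$. Consequently your sentence ``$\bar{U}\vert_{\D_d^{(I_d)}}=u_{I_d}$ plus the extension contributions of the $u_{I_j}$ with $I_j\subsetneq I_d$'' is only correct once one passes to the \emph{global} extension $\bar{U}_{I_j}$ of proposition~8.4 in \cite{THJ5}, which sums the pathwise extensions over all admissible orderings and thereby reaches every superface of $\D_j^{(I_j)}$. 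The telescoping of the final data and the $C_p^\infty$-gluing likewise need this global object, not the single-path one; your last paragraph already acknowledges this, but the middle of the argument should be phrased in terms of $\bar{U}_{I_j}$ rather than $\bar{U}_{I_j}^{i_j,\dotsc,i_n}$.
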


\section{Motivation}

To illustrate the motivation for the regularisation scheme, we use the example of $\bar{U}_{I_k}^{i_k,\dotsc,i_{n}}$ in $\cl{\D_n^{(I_{n})}}$ as in  equation~\eqref{eq_path_ext}: An assessment of the geometrical situation of the respective incompatibilities shows that for every $t<0$ the critical area for the top-dimen\-sional component $\bar{u}_{I_k}^{i_k,\dotsc,i_{n}}$ \resp its continuous extension actually only consists of the domain where we have $p^{i_n}+p^{i_{n-1}}=0$, hence $\cl{\D_{n-2}^{(I_{n-2})}}$. On all other boundary instances of arbitrary dimension, $\bar{u}_{I_k}^{i_k,\dotsc,i_{n}}$ as in equation~\eqref{eq_iter_ext} is continuously extendable and of class $C^\infty$ with respect to the spatial variables there. Thus, at first there is only one connected component of the boundary gap which needs to be addressed. 

However, as will turn out, the full hierarchical solution $\bar{U}_{I_k}^{i_k,\dotsc,i_{n}}$ actually comprises a nested incompatibility in $\cl{\D_{n-2}^{(I_{n-2})}}$ in the sense that also $\bar{u}_{I_k}^{i_k,\dotsc,i_{n-1}}$ does not extend continuously to $\cl{\D_{n-3}^{(I_{n-3})}}$ and so forth until $\bar{u}_{I_k}^{i_k,i_{k+1},i_{k+2}}$ not extending continuously to $\cl{\D_{k}^{(I_{k})}}$. This implies that the desired transformation needs to affect all relevant dimensions, which will be accomplished by an iterative advancement: In each step, one dimension from the simplex is removed and converted into a dimension of the corresponding cube component, \ie the corresponding coordinate is released from the simplex property $\sum_i p^i\leq1$. In doing so, the solution gains the required regularity at the corresponding level with each iteration, \ie eventually each of its components is transformed such that it extends smoothly to the boundary. Thus, after $n-k-1$ of these steps, the relevant component of  $\cl{\D_n^{(I_n)}}$ is converted into a cube of dimension $n-k-1$, and the correspondingly transformed solution %$\tilde{U}_{I_k}^{i_k,i_{k+1};i_{k+2},\dotsc,i_{n}}$
is sufficiently regularised, in particular meaning that it now smoothly extends to the full boundary, as will be shown.

\section{The blow-up transformation and its iteration}

Now, we will present the \txind{blow-up transformation} in full detail and state all necessary results. We start with the findings for the basic transformation and  advance to the results for a suitably iterated application of the blow-up transformation later:  

\begin{lem}[blow-up transformation]\label{lem_blowup_trans}
Let $I_d=\set{0,1,\dotsc,d}$. A blow-up transformation $\Phi^\si_\ri$ with $\si,\ri\in I_d\smin\set{0}$ mapping
\begin{align}
\cl{\D_d^{(I_d)}}\smin\cl{\D_{d-2}^{(I_d\smin\set{\si,\ri})}}
=\bigset{(p^1,\dotsc,p^d)\big\vert p^i\geq0\text{ for $i\in I_d$},p^\si+p^\ri>0}
\end{align}
with ${p}^0\ce1-\sum_{i\in I_d\smin\set{0}} {p}^i$ $C^\infty$-diffeomorphically onto
\begin{multline}
\Big(\cl{\D_{d-1}^{(I_d\smin\set{\ri})}}\smin\cl{\D_{d-2}^{(I_d\smin\set{\si,\ri})}}\Big)\times\cl{\cube_{1}^{(\set{\ri})}}\\
%=\bigset{(\tilde{p}^1,\dotsc,\tilde{p}^d)\big\vert \tilde{p}^i\geq0\text{ for $i\in I_d\smin\set{0,\si}$},\tilde{p}^\si>0,\sum_{i\deq t} \tilde{p}^i\leq 1,\tilde{p}^\ri\leq 1}
=\bigset{(\tilde{p}^1,\dotsc,\tilde{p}^d)\big\vert\tilde{p}^i\geq0\text{ for $i\in I_d\smin\set{\ri}$},\tilde{p}^\si>0;\tilde{p}^\ri\in[0,1]}
\end{multline}
with $\tilde{p}^0\ce1-\sum_{i\in I_d\smin\set{0,\ri}} \tilde{p}^i$
and altogether
\begin{align}
 \cl{\D_d^{(I_d)}} \longmapsto \Big( \cl{\D_{d-1}^{(I_d\smin\set{\ri})}}\times\cl{\cube_{1}^{(\set{\ri})}}\Big)\smin N_{\si}
\end{align}
with\sindex[not]{Nr@$N_\si$}
\begin{align}
N_{\si}\ce \cl{\D_{d-2}^{(I_d\smin\set{\si,\ri})}}\times\set{0}^{(\set{\si})}\times\cl{\cubex_1^{(\set{\ri})}},
\end{align}
appearing as an additional $(d-1)$-dimen\-sional face \index{cube!additional face} of $\cl{\D_{d-1}^{(I_d\smin\set{\ri})}}\times\cl{\cube_{1}^{(\set{\ri})}}$,
is given by
\begin{align}\label{eq_blowup_trans_i}
\tilde{p}^i&\ce\hspace*{0.9em} p^i\hspace*{2.4em}\text{for $i\neq \si,\ri$},\\
\label{eq_blowup_trans_si}
\tilde{p}^\si&\ce\hspace*{0.9em} p^\si + p^\ri,\\\label{eq_blowup_trans_ri}
\tilde{p}^\ri&\ce
\begin{cases}
 \frac{p^\ri}{p^\si + p^\ri}&\text{for $p^\si+p^\ri>0$}\\
 0 	 &\text{for $p^\si+p^\ri=0$}.
\end{cases}
\end{align}
% By additionally putting
% $\tilde{p}^\ri\ce 0 \text{ for $p^\si,p^\ri=0$}$,
% the transformation $\Phi^\si_\ri$ also maps
%and distorting 
%$\cl{\D_{d-2}^{(I_d\smin\set{\si,\ri})}}$%=\cl{\D_2^{(\set{\si,\ri,0})}}\cap\set{p^\si+p^\ri=0}$
%into 
%$\cl{\D_{d-2}^{(I_d\smin\set{\si,\ri})}}\times\cl{\cube_{1}^{(\set{\ri})}}$
%$\set{(0,\tilde{p}^\ri)}\subset\cube_{2}^{(\set{0,\si,\ri})}$
%$\bigset{(\tilde{p}^\si,\tilde{p}^\ri)\big\vert\tilde{p}^\si=0, -1\leq \tilde{p}^\ri\leq 1}$.
%as an additional $(d-1)$-dimen\-sional face

%Correspondingly, a function $u(p)=\dfrac{p^\ri}{p^\si+p^\ri}$ on $\cl{\D_d^{(I_d)}}\smin\cl{\D_{d-2}^{(I_d\smin\set{\si,\ri})}}$ is transformed into $\tilde{u}(\tilde{p})=1-\tilde{p}^\ri$ on $\Big(\cl{\D_{d-1}^{(I_d\smin\set{\ri})}}\smin\cl{\D_{d-2}^{(I_d\smin\set{\si,\ri})}}\Big)\times\cl{\cube_{1}^{(\set{\ri})}}$ with $\tilde{u}$ extending continuously to $\D_{d-2}^{(I_d\smin\set{\si,\ri})}\times\cl{\cube_{1}^{(\set{\ri})}}$. %hier kein \cl
\end{lem}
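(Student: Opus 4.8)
The transformation $\Phi^\si_\ri$ fixes every coordinate except $p^\si$ and $p^\ri$, on which it acts by the blow-up substitution $(p^\si,p^\ri)\longmapsto\big(p^\si+p^\ri,\ p^\ri/(p^\si+p^\ri)\big)$, separating the total mass carried by alleles $\si$ and $\ri$ from the proportion in which that mass is split. The plan is to reduce the essential content to this two-dimensional picture in four steps: (i) verify that the image of the domain lies in the asserted target and avoids $N_\si$; (ii) exhibit an explicit, in fact polynomial, candidate inverse and check the two composition identities; (iii) confirm $C^\infty$-regularity up to the stratified boundary together with nonvanishing of the Jacobian on the punctured domain; (iv) track the boundary faces and thereby identify $N_\si$ as the single additional face produced.

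For (i) and (ii): one first records that $\tilde p^0=1-\sum_{i\in I_d\smin\set{0,\ri}}\tilde p^i=1-\sum_{i\in I_d\smin\set{0,\si,\ri}}p^i-(p^\si+p^\ri)=p^0$, so the constraint $\tilde p^0\ge0$ (equivalently $\sum\tilde p^i\le1$) and the nonnegativity of the unaffected coordinates are inherited directly; on the domain $p^\si+p^\ri>0$, whence $\tilde p^\si=p^\si+p^\ri>0$ and $\tilde p^\ri=p^\ri/(p^\si+p^\ri)\in[0,1]$, so the image lies in $\big(\cl{\D_{d-1}^{(I_d\smin\set\ri)}}\smin\cl{\D_{d-2}^{(I_d\smin\set{\si,\ri})}}\big)\times\cl{\cube_1^{(\set\ri)}}$, and strict positivity of $\tilde p^\si$ shows it never meets $N_\si\subset\set{\tilde p^\si=0}$. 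As inverse I would take $p^i=\tilde p^i$ for $i\neq\si,\ri$ together with
\[
p^\si=\tilde p^\si\,(1-\tilde p^\ri),\qquad p^\ri=\tilde p^\si\,\tilde p^\ri,
\]
which is polynomial, hence $C^\infty$ on $\R^d$; direct substitution (using $p^\si+p^\ri=\tilde p^\si$ in the forward direction) then gives both composition identities, and one checks that this map carries the target back into $\cl{\D_d^{(I_d)}}$ with $p^\si+p^\ri=\tilde p^\si>0$, i.e.\ into the punctured domain.

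For (iii) and (iv): $\Phi^\si_\ri$ is a quotient of smooth functions whose denominator $p^\si+p^\ri$ is strictly positive on the domain, and its inverse is polynomial, so both extend to smooth maps on open neighbourhoods in $\R^d$ and the bijection is a $C^\infty$-diffeomorphism up to the boundary in the manifold-with-corners sense. As a cross-check, away from the $(\si,\ri)$-block the Jacobian is the identity, while on that block its determinant equals $1/(p^\si+p^\ri)>0$ (the Jacobian of the inverse having determinant $\tilde p^\si$), confirming local invertibility. The facets finally correspond under $\Phi^\si_\ri$ as $\set{p^i=0}\leftrightarrow\set{\tilde p^i=0}$ for $i\in I_d\smin\set{0,\si,\ri}$, $\set{p^0=0}\leftrightarrow\set{\tilde p^0=0}$, $\set{p^\ri=0}\leftrightarrow\set{\tilde p^\ri=0}$, and $\set{p^\si=0}\leftrightarrow\set{\tilde p^\ri=1}$; the only facet of $\cl{\D_{d-1}^{(I_d\smin\set\ri)}}\times\cl{\cube_1^{(\set\ri)}}$ not obtained this way is $\set{\tilde p^\si=0}$, which is $N_\si$, its base vertex $\set{\tilde p^\si=\tilde p^\ri=0}$ being the image of the excised stratum $\cl{\D_{d-2}^{(I_d\smin\set{\si,\ri})}}$ — this accounts both for the $\cl{\cubex_1^{(\set\ri)}}$ in the definition of $N_\si$ and for the ``altogether'' claim, once $\Phi^\si_\ri$ is extended to all of $\cl{\D_d^{(I_d)}}$ by the branch $\tilde p^\ri:=0$ on that stratum.

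The routine verifications aside, the step needing genuine care will be the boundary bookkeeping: showing that $\tilde p^\ri=p^\ri/(p^\si+p^\ri)$ extends $C^\infty$ up to the retained facets $\set{p^\si=0}$ and $\set{p^\ri=0}$ — where numerator and denominator degenerate simultaneously, yet $p^\si+p^\ri$ stays positive because the stratum $\cl{\D_{d-2}^{(I_d\smin\set{\si,\ri})}}$ has been excised — and making precise the set-theoretic identities between the product domain, that removed sub-simplex, and the $\cubex$-face $N_\si$ across the stratified boundary of $\cl{\D_d^{(I_d)}}$.
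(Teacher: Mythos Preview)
Your proposal is correct and follows essentially the same approach as the paper: exhibit the explicit polynomial inverse $p^\si=\tilde p^\si(1-\tilde p^\ri)$, $p^\ri=\tilde p^\si\tilde p^\ri$, $p^i=\tilde p^i$ otherwise, and observe that both maps are smooth on the punctured domain because the forward map is a quotient with strictly positive denominator $p^\si+p^\ri$ while the inverse is polynomial. Your treatment is in fact more detailed than the paper's --- the Jacobian computation and the explicit facet correspondence are extras (the latter essentially anticipates the content of the subsequent corollary), while the paper contents itself with noting $0\le p^\ri/(p^\si+p^\ri)\le 1$ and writing down the inverse.
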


\begin{cor}\label{cor_blowup_trans}
While we obtain $N_r=\cl{\D_{d-2}^{(I_d\smin\set{\si,\ri})}}\times\cl{\cubex_{1}^{(\set{\ri})}}$ as an additional $(d-1)$-dimen\-sional face with $\Phi^\si_\ri$, the existing $(d-1)$-dimen\-sional faces of $\cl{\D_d^{(I_d)}}$ including their boundaries are mapped as follows:
\begin{align}
\cl{\D_{d-1}^{(I_d\smin\set{\ri})}}&\longmapsto \cl{\D_{d-1}^{(I_d\smin\set{\ri})}}\times\set{0}^{(\set{\ri})},\\
\cl{\D_{d-1}^{(I_d\smin\set{\si})}}\smin\cl{\D_{d-2}^{(I_d\smin\set{\si,\ri})}}
&\longmapsto\Big(\cl{\D_{d-1}^{(I_d\smin\set{\ri})}}\smin\cl{\D_{d-2}^{(I_d\smin\set{\si,\ri})}}\Big)\times\set{1}^{(\set{\ri})}
%\\\cl{\D_{d-1}^{(I_d\smin\set{\ri})}}\smin\cl{\D_{d-2}^{(I_d\smin\set{\si,\ri})}}
%&\longmapsto\Big(\cl{\D_{d-1}^{(I_d\smin\set{\ri})}}\smin\cl{\D_{d-2}^{(I_d\smin\set{\si,\ri})}}\Big)\times\set{0}^{(\set{\ri})}
\end{align}
and
\begin{align}
\cl{\D_{d-1}^{(I_d\smin\set{i})}}\smin\cl{\D_{d-3}^{(I_d\smin\set{i,\si,\ri})}}
&\longmapsto\Big(\cl{\D_{d-2}^{(I_d\smin\set{i,\ri})}}\smin\cl{\D_{d-3}^{(I_d\smin\set{i,\si,\ri})}}\Big)\times\cl{\cube_{1}^{(\set{\ri})}}
\quad\text{for $i\in I_d\smin\set{\si,\ri}$}.
\end{align}
\end{cor}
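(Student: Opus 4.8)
The corollary is an unravelling of the explicit coordinate description of $\Phi^\si_\ri$ in \eqref{eq_blowup_trans_i}--\eqref{eq_blowup_trans_ri} combined with the diffeomorphism statement of Lemma~\ref{lem_blowup_trans}; I would simply substitute the defining equation of each facet of $\cl{\D_d^{(I_d)}}$ into these formulas and read off the image. The $(d-1)$-dimen\-sional faces of $\cl{\D_d^{(I_d)}}$ are the sets $\cl{\D_{d-1}^{(I_d\smin\set{j})}}=\set{p^j=0}$ with $j\in I_d$, and there are three kinds depending on whether $j=\ri$, $j=\si$, or $j=i\in I_d\smin\set{\si,\ri}$; as $I_d\smin\set{\si,\ri}$ has $d-1$ elements, this exhausts all $d+1$ facets.

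For $j=\ri$ one has $p^\ri=0$, hence $\tilde{p}^\ri=0$ by either branch of \eqref{eq_blowup_trans_ri}, while $\tilde{p}^\si=p^\si$ and $\tilde{p}^i=p^i$ for $i\ne\si,\ri$; so $\Phi^\si_\ri$ restricts to the identity in the remaining $d-1$ coordinates and maps $\cl{\D_{d-1}^{(I_d\smin\set{\ri})}}$ bijectively onto $\cl{\D_{d-1}^{(I_d\smin\set{\ri})}}\times\set{0}^{(\set{\ri})}$, compatibly with all sub-faces. For $j=\si$, on $\cl{\D_{d-1}^{(I_d\smin\set{\si})}}\smin\cl{\D_{d-2}^{(I_d\smin\set{\si,\ri})}}$ one has $p^\si=0$ and $p^\ri>0$, so \eqref{eq_blowup_trans_ri} gives $\tilde{p}^\ri=p^\ri/p^\ri=1$, \eqref{eq_blowup_trans_si} gives $\tilde{p}^\si=p^\ri>0$, and the other coordinates are unchanged (one checks $\tilde{p}^0=p^0$ from the two normalisations); since $p^\ri$ ranges over the admissible positive values, $\tilde{p}^\si$ ranges over the strictly positive part of $\cl{\D_{d-1}^{(I_d\smin\set{\ri})}}$, giving the image $\big(\cl{\D_{d-1}^{(I_d\smin\set{\ri})}}\smin\cl{\D_{d-2}^{(I_d\smin\set{\si,\ri})}}\big)\times\set{1}^{(\set{\ri})}$. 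Here the removed stratum $\cl{\D_{d-2}^{(I_d\smin\set{\si,\ri})}}=\set{p^\si=p^\ri=0}$ is precisely the locus excluded from the domain of the diffeomorphism in Lemma~\ref{lem_blowup_trans}, on which $\Phi^\si_\ri$ collapses $\tilde{p}^\ri$ to $0$.

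For $j=i\in I_d\smin\set{\si,\ri}$, restrict $\Phi^\si_\ri$ to $\cl{\D_{d-1}^{(I_d\smin\set{i})}}\smin\cl{\D_{d-3}^{(I_d\smin\set{i,\si,\ri})}}=\set{p^i=0,\ p^\si+p^\ri>0}$. There $\tilde{p}^i=0$ is preserved, the coordinates indexed by $I_d\smin\set{i,\si,\ri}$ are unchanged, and $(p^\si,p^\ri)\mapsto(\tilde{p}^\si,\tilde{p}^\ri)=\big(p^\si+p^\ri,\ p^\ri/(p^\si+p^\ri)\big)$ is exactly the two-dimen\-sional core of the transformation treated in Lemma~\ref{lem_blowup_trans}, a $C^\infty$-diffeomorphism of $\set{p^\si,p^\ri\ge0,\ p^\si+p^\ri>0}$ onto $\set{\tilde{p}^\si>0,\ \tilde{p}^\ri\in[0,1]}$. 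Rewriting the image in the index-set notation for products of a simplex with a cube introduced in Section~\ref{sec_simplex} gives $\big(\cl{\D_{d-2}^{(I_d\smin\set{i,\ri})}}\smin\cl{\D_{d-3}^{(I_d\smin\set{i,\si,\ri})}}\big)\times\cl{\cube_1^{(\set{\ri})}}$, and both sides have dimension $d-1$. The statement about $N_\si$ merely repeats the corresponding clause of Lemma~\ref{lem_blowup_trans} (with the factor $\set{0}^{(\set{\si})}$ suppressed by the convention that zero-fixed coordinates may be omitted), noting that $N_\si$ is \emph{glued on} as a new facet and is not the image of any face of $\cl{\D_d^{(I_d)}}$.

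I do not expect any genuine analytic difficulty here: the only thing that requires care is the bookkeeping of closures and of the sub-strata that must be removed — in particular the role of the locus $\set{p^\si=p^\ri=0}$ where $\Phi^\si_\ri$ is not a diffeomorphism, the distinction between $\cl{\cube_1^{(\set{\ri})}}=[0,1]$ and $\cl{\cubex_1^{(\set{\ri})}}=(0,1]$ entering $N_\si$, and the matching of the coordinate descriptions with the simplex-times-cube index notation. Once these are organised, each asserted correspondence (including its restriction to the lower-dimen\-sional boundary strata) follows directly from the substitutions above together with the restriction of the diffeomorphism of Lemma~\ref{lem_blowup_trans} to the relevant face.
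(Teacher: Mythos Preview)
Your proposal is correct and matches the paper's approach: the paper does not give a separate proof of this corollary at all, treating it as an immediate consequence of the explicit formulas \eqref{eq_blowup_trans_i}--\eqref{eq_blowup_trans_ri} and the inverse \eqref{eq_blowup_inverse} established in Lemma~\ref{lem_blowup_trans}. Your case-by-case substitution of the facet equations $p^j=0$ into the transformation (distinguishing $j=\ri$, $j=\si$, and $j\in I_d\smin\set{\si,\ri}$) is exactly the computation the paper leaves to the reader, and your bookkeeping of the excluded stratum $\set{p^\si+p^\ri=0}$ is the only point requiring care.
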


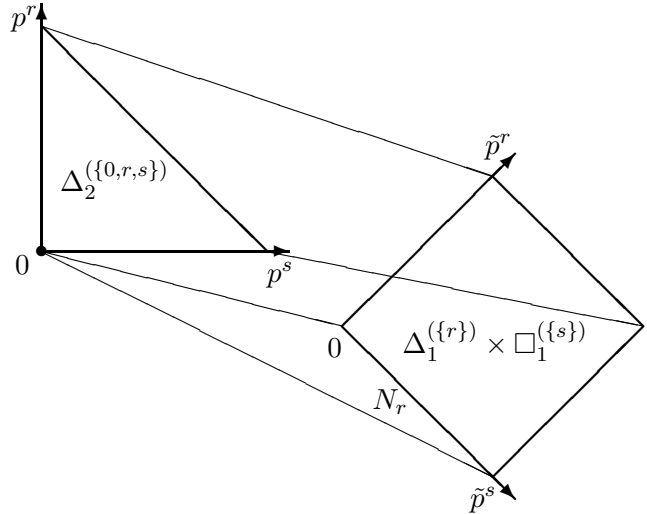
\begin{figure}[ht]\centering
\setlength{\unitlength}{1cm}
\begin{picture}(8,7)(-0.3,-0.5)

%simplex
\thicklines
\put(0,3){
\put(0,0){\vector(0,1){3.3}}
\put(0,0){\vector(1,0){3.3}}
\put(0,0){\circle*{0.15}}
\put(0,3){\line(1,-1){3}}

\put(-0.4,3){$p^\si$}
\put(3,-0.4){$p^\ri$}
\put(-0.35,-0.3){$0$}
\put(0.25,0.8){$\D_2^{(\set{0,\si,\ri})}$}
}

%cube
\thicklines
\put(4,2){
\put(0,0){\vector(1,1){2.3}}
\put(0,0){\vector(1,-1){2.3}}
\put(2,2){\line(1,-1){2}}
\put(2,-2){\line(1,1){2}}

\put(1.9,2.3){$\tilde{p}^\si$}
\put(1.7,-2.4){$\tilde{p}^\ri$}
\put(-0.2,-0.4){$0$}
\put(0.8,-0.35){$\D_1^{(\set{\si})}\times\cube_1^{(\set{\ri})}$}
\put(0.4,-1.1){$N_\si$}
}

\thinlines
\put(0,6){\line(3,-1){6}}
\put(3,3){\line(5,-1){5}}
%\put(0,3){\line(3,-1){6}}

\thinlines
\put(0,3){\line(4,-1){4}}
%\put(0,3){\line(3,-1){6}}
\put(0,3){\line(2,-1){6}}
\end{picture}
\caption{An illustration of the blow-up transformation for $d=2$}%\label{fig_blow-up}
\end{figure}

\begin{rmk}\label{rmk_blowup_trans}
If the $\tilde{p}^\ri$ in lemma~\ref{lem_blowup_trans} is chosen differently with
\begin{align}
{\tilde{p}^\ri}&\ce\frac{p^\si}{p^\si + p^\ri},
\end{align}
this flips the orientation of the $\tilde{p}^\ri$-coordinate in $\cube_{1}^{(\set{\ri})}$ as all appearances of $\tilde{p}^\ri$ now need to be replaced by $1-\tilde{p}^{\ri}$. This, however, does not affect the statements of lemma~\ref{lem_blowup_trans}, whereas in corollary~\ref{cor_blowup_trans} the images of $\cl{\D_{d-1}^{(I_d\smin\set{\si})}}\smin\cl{\D_{d-2}^{(I_d\smin\set{\si,\ri})}}$ and $\cl{\D_{d-1}^{(I_d\smin\set{\ri})}}\smin\cl{\D_{d-2}^{(I_d\smin\set{\si,\ri})}}$ are interchanged. Thus, unless stated differently, in the following we will always assume that the $\tilde{p}^\ri$-coordinate is chosen with an orientation as given in lemma~\ref{lem_blowup_trans}.
\end{rmk}

\begin{proof}[Proof of lemma~\ref{lem_blowup_trans}]
The transformation corresponds geometrically to %a rotation by $\frac{\pi}2$ around the origin and subsequently 
a scaling of the domain into $\tilde{p}^\ri$-direction with the scaling factor given by $\frac1{\tilde{p}^{\si}}$. % (after rotating appropriately). 
The assertion about the transformation domains is straightforward since we have $0\leq\frac{p^\ri}{p^\si + p^\ri}\leq1$ on $\cl{\D_d^{(I_d)}}\smin\cl{\D_{d-2}^{(I_d\smin\set{\si,\ri})}}$. Likewise, the $C^\infty$-diffeomorphism property follows from $\Phi^\si_\ri$ being smoothly differentiable as long as $\tilde{p}^\si= p^\si + p^\ri>0$ and the smoothness of the inverse transformation $(\Phi^\si_\ri)^{-1}$,  given by
\begin{align}\label{eq_blowup_inverse}
{p^\si}&=\tilde{p}^\si(1-\tilde{p}^\ri),\\
{p^\ri}&=\tilde{p}^\si\tilde{p}^\ri,\\
{p}^i&=\tilde{p}^i\quad\text{for $i\neq \si,\ri$}.
\end{align}
By this, it also becomes obvious that $(\Phi^\si_\ri)^{-1}$ maps $\Big(\cl{\D_{d-1}^{(I_d\smin\set{\ri})}}\smin\cl{\D_{d-2}^{(I_d\smin\set{\si,\ri})}}\Big)\times\cl{\cube_{1}^{(\set{\ri})}}$ onto $\cl{\D_d^{(I_d)}}\smin\cl{\D_{d-2}^{(I_d\smin\set{\si,\ri})}}$. % as well as~$u$'s transformation behaviour on $\cl{\D_d^{(I_d)}}\smin\cl{\D_{d-2}^{(I_d\smin\set{\si,\ri})}}$. Furthermore, the form of $\tilde{u}$ directly implies the existence of limits as $\tilde{p}^\si\to 0$.
\end{proof}

The next lemma is concerned with the transformation behaviour of the operator $L^*_n$; all considerations apply to $L^*_n$ in its domain $\D_n$ as well as  -- considering the restrictability of $L_n^*$ (\cf %lemma~\ref{lem_restr_bwd} in 
\cite{THJ5}) -- in the closure $\cl{\D}_n$ \resp to the transformed operator $\tilde{L}_n^*$ in the subsequent transformation images of the domain (the domain in question may not be stated explicitly -- this will be done in proposition~\ref{prop_blowup_trans_total}):

\begin{lem}\label{lem_blowup_trans_op}
Let $I_n^\prime\ce\set{1,\dotsc,n}$ be an index set with $\si,\ri\in I_n^\prime$ and let $\set{i_{1},\dotsc,i_{n}}$ be an ordering of $I_n^\prime$ such that $\si,\ri\in\set{i_{1},\dotsc,i_{m}}$ for some $m\leq n$. When changing coordinates $(p^i)_{i\in I_n^\prime}\mapsto(\tilde{p}^i)_{i\in I_n^\prime}$
%\sth coordinates $p^\si,p^\ri$ are transformed according to $\Phi$ whereas all coordinates with different index remain unchanged,
by $\Phi^\si_\ri$, the operator
\begin{align}
L_n^*=\half\sum_{i,j=1}^n a^{ij}(p)\dd{}{p^i}\dd{}{p^j}
\end{align}
with $a^{ij}(p)=p^i(\delta^i_j-p^j)$ for $i,j\in\set{i_{1},\dotsc,i_{m}}$, $a^{ij}=0$ else for $i\neq j$ is transformed into
\begin{align}
\tilde{L}_n^*=\half\sum_{k,l=1}^k\tilde{a}^{kl}(\tilde{p})\dd{}{\tilde{p}^k}\dd{}{\tilde{p}^l}
\end{align}
with $\tilde{a}^{kl}(\tilde{p})=\tilde{p}^k(\delta^k_l-\tilde{p}^l)$ for $k,l\in\set{i_{1},\dotsc,i_{m}}\smin\set{\ri}$, $\tilde{a}^{\ri\ri}(\tilde{p})=\frac{\tilde{p}^{\ri}(1-\tilde{p}^\ri)}{\tilde{p}^\si}$, $\tilde{a}^{\ri l}=\tilde{a}^{l\ri}=0$ for $l\neq \ri$ and $\tilde{a}^{kl}(\tilde{p})=a^{kl}(p)$ (with the coordinates yet to be replaced) for all remaining indices. This also holds if the $\tilde{p}^\ri$-coordinate is chosen with opposite orientation (\cf remark~\ref{rmk_blowup_trans}).
\end{lem}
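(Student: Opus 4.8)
The plan is to compute the transformation of $L_n^*$ under $\Phi^\si_\ri$ directly via the chain rule, exploiting that $\Phi^\si_\ri$ acts nontrivially only on the two coordinates $p^\si, p^\ri$ and leaves all others fixed (equation~\eqref{eq_blowup_trans_i}). So the first step is to record the partial derivatives of the new coordinates with respect to the old: from \eqref{eq_blowup_trans_si}--\eqref{eq_blowup_trans_ri} one has $\dd{\tilde p^\si}{p^\si}=\dd{\tilde p^\si}{p^\ri}=1$, while $\dd{\tilde p^\ri}{p^\si}=-\tilde p^\ri/\tilde p^\si$ and $\dd{\tilde p^\ri}{p^\ri}=(1-\tilde p^\ri)/\tilde p^\si$ (using $\tilde p^\si=p^\si+p^\ri$), and $\dd{\tilde p^i}{p^j}=\delta^i_j$ for the remaining indices. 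It is cleaner to work the other direction, i.e. to express $\dd{}{p^i}$ in terms of $\dd{}{\tilde p^k}$, which gives $\dd{}{p^i}=\dd{}{\tilde p^i}$ for $i\neq\si,\ri$, and
\begin{align*}
\dd{}{p^\si}=\dd{}{\tilde p^\si}-\frac{\tilde p^\ri}{\tilde p^\si}\dd{}{\tilde p^\ri},\qquad
\dd{}{p^\ri}=\dd{}{\tilde p^\si}+\frac{1-\tilde p^\ri}{\tilde p^\si}\dd{}{\tilde p^\ri}.
\end{align*}
Since $\Phi^\si_\ri$ is affine-linear in each fiber $\tilde p^\si=\text{const}$ and the coefficient functions $\tilde p^\ri/\tilde p^\si$ etc.\ are smooth where $\tilde p^\si>0$, these first-order operators can be composed to obtain the second-order operator without curvature-type correction terms beyond what the product rule produces; one just has to be careful because the coefficients $\tilde p^\ri/\tilde p^\si$ do depend on $\tilde p^\si$ and $\tilde p^\ri$, so differentiating them contributes.

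Next I would substitute these expressions into $L_n^*=\half\sum a^{ij}\dd{}{p^i}\dd{}{p^j}$ and collect terms. The terms with $i,j\notin\set{\si,\ri}$ are untouched and give $\half\sum\tilde a^{kl}\dd{}{\tilde p^k}\dd{}{\tilde p^l}$ with $\tilde a^{kl}=a^{kl}$, as claimed. The mixed terms $a^{i\si}\dd{}{p^i}\dd{}{p^\si}$ and $a^{i\ri}\dd{}{p^i}\dd{}{p^\ri}$ (for $i\notin\set{\si,\ri}$) combine: since $a^{i\si}=-p^ip^\si$ and $a^{i\ri}=-p^ip^\ri$, and $p^\si+p^\ri=\tilde p^\si$, the sum $a^{i\si}\dd{}{p^\si}+a^{i\ri}\dd{}{p^\ri}$ reduces — the $\dd{}{\tilde p^\ri}$ contributions cancel by design of the weights $-\tilde p^\ri/\tilde p^\si$ and $(1-\tilde p^\ri)/\tilde p^\si$ against the factors $p^\si,p^\ri$ — leaving exactly $-p^i\tilde p^\si\dd{}{\tilde p^\si}=-\tilde p^i\tilde p^\si\dd{}{\tilde p^\si}$, which is $a^{i\si}(\tilde p)$ in the new variables with no $\ri$-dependence, matching $\tilde a^{i\si}(\tilde p)=\tilde p^i(\delta^i_\si-\tilde p^\si)$ and $\tilde a^{i\ri}=0$. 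The crucial computation is the coefficient of the diagonal $(\dd{}{\tilde p^\ri})^2$ term: it receives contributions only from the block $a^{\si\si}(\dd{}{p^\si})^2+2a^{\si\ri}\dd{}{p^\si}\dd{}{p^\ri}+a^{\ri\ri}(\dd{}{p^\ri})^2$ with $a^{\si\si}=p^\si(1-p^\si)$, $a^{\si\ri}=-p^\si p^\ri$, $a^{\ri\ri}=p^\ri(1-p^\ri)$. Extracting the $(\dd{}{\tilde p^\ri})^2$ coefficient from this quadratic form one gets $\frac{1}{(\tilde p^\si)^2}\big[a^{\si\si}(\tilde p^\ri)^2 - 2a^{\si\ri}\tilde p^\ri(1-\tilde p^\ri)+a^{\ri\ri}(1-\tilde p^\ri)^2\big]$, and substituting $p^\si=\tilde p^\si(1-\tilde p^\ri)$, $p^\ri=\tilde p^\si\tilde p^\ri$ and simplifying — the $(\tilde p^\si)^2$-terms inside the bracket cancel, leaving $\tilde p^\si\tilde p^\ri(1-\tilde p^\ri)$ — one obtains $\tilde a^{\ri\ri}=\tilde p^\ri(1-\tilde p^\ri)/\tilde p^\si$. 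Finally I must check that all the genuinely first-order terms produced by differentiating the coefficients $\pm\tilde p^\ri/\tilde p^\si$ (and $(1-\tilde p^\ri)/\tilde p^\si$) cancel, so that $\tilde L_n^*$ really has no first-order part; this is where the specific Wright--Fisher coefficient structure $a^{ij}=p^i(\delta^i_j-p^j)$ is used, and it is essentially the same cancellation that makes $\tilde a^{\ri l}=0$ for $l\neq\ri$.

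The main obstacle is precisely this bookkeeping of first-order and off-diagonal $\ri$-terms: one must verify that after substituting the inverse transformation \eqref{eq_blowup_inverse}, every term of the form $\dd{}{\tilde p^\ri}$ alone (no second derivative) and every mixed term $\dd{}{\tilde p^\ri}\dd{}{\tilde p^k}$ with $k\neq\ri$ vanishes identically. The off-diagonal cancellation $\tilde a^{\ri k}=0$ follows from the mixed-block computation above once one notes that the $\dd{}{\tilde p^\ri}$ weight attached to $\dd{}{p^\si}$ is $-\tilde p^\ri/\tilde p^\si$ and to $\dd{}{p^\ri}$ is $(1-\tilde p^\ri)/\tilde p^\si$, whose ratio $-\tilde p^\ri:(1-\tilde p^\ri)$ is exactly the negative ratio $p^\ri:p^\si$ on the nose when $\tilde p^\si>0$ — this is the geometric content of the ``scaling in $\tilde p^\ri$-direction'' remark in the proof of Lemma~\ref{lem_blowup_trans}. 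The first-order terms require writing out $\half a^{\si\si}\dd{}{p^\si}(\dd{}{p^\si}) = \half a^{\si\si}(\dd{}{\tilde p^\si}-\tfrac{\tilde p^\ri}{\tilde p^\si}\dd{}{\tilde p^\ri})(\dd{}{\tilde p^\si}-\tfrac{\tilde p^\ri}{\tilde p^\si}\dd{}{\tilde p^\ri})$ and similarly for the other three terms, tracking the derivatives falling on $\tilde p^\ri/\tilde p^\si$; summing the four contributions with the coefficients $a^{\si\si},a^{\si\ri},a^{\ri\ri}$ expressed via \eqref{eq_blowup_inverse}, the resulting $\dd{}{\tilde p^\ri}$-terms cancel. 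The ``opposite orientation'' case (Remark~\ref{rmk_blowup_trans}) is handled by the substitution $\tilde p^\ri\mapsto 1-\tilde p^\ri$, under which $\tilde a^{\ri\ri}=\tilde p^\ri(1-\tilde p^\ri)/\tilde p^\si$ is manifestly invariant and the first-order and off-diagonal cancellations are preserved, so the conclusion is unchanged.
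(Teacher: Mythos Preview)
Your proposal is correct and follows essentially the same route as the paper: a direct chain-rule computation using the Jacobian of $\Phi^\si_\ri$, followed by case-by-case identification of the new coefficients $\tilde a^{kl}$ and verification that no first-order terms survive. The only cosmetic difference is that the paper organises the computation via the covariant transformation formula $\tilde a^{kl}=\sum_{i,j}a^{ij}\,\dd{\tilde p^k}{p^i}\,\dd{\tilde p^l}{p^j}$ together with a separate check that $\sum_{i,j}a^{ij}\,\ddd{\tilde p^k}{p^i}{p^j}=0$, whereas you substitute $\dd{}{p^i}=\sum_k\dd{\tilde p^k}{p^i}\,\dd{}{\tilde p^k}$ directly into the operator and collect; these are two presentations of the same calculation and lead to the same cancellations.
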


\begin{proof}
Under a change of coordinates $(p^i)\mapsto(\tilde{p}^i)$, the coefficients of the \second order derivatives $a^{ij}$ transform as
\begin{align}
\tilde{a}^{kl}=\sum_{i,j}a^{ij}\dd{\tilde{p}^k}{p^i}\dd{\tilde{p}^l}{p^j},%+\sum_{i,j}a^{ij}\ddd{\tilde{p}^k}{p^i}{p^j}
\end{align}
while we may get additional first order derivatives with coefficients $\sum_{i,j}a^{ij}\ddd{\tilde{p}^k}{p^i}{p^j}$.

For the transformation at hand, we have (\cf equations~\eqref{eq_blowup_trans_si} and~\eqref{eq_blowup_trans_i})
\begin{align}
\dd{\tilde{p}^k}{p^i}=\delta^k_i+\delta^k_\si\delta^\ri_i\quad\text{for $k\neq \ri$}
\end{align}
and (\cf equation~\eqref{eq_blowup_trans_ri})
\begin{align}\label{eq_blowup_trans_t_derived}
\dd{\tilde{p}^\ri}{p^i}
=\frac{p^\si}{(p^\si+p^\ri)^2}\delta^\ri_i-\frac{p^\ri}{(p^\si+p^\ri)^2}\delta^\si_i
=\frac{1-\tilde{p}^\ri}{\tilde{p}^\si}\delta^\ri_i-\frac{\tilde{p}^\ri}{\tilde{p}^\si}\delta^\si_i.
\end{align}
Utilising this, we obtain
\begin{align}
\tilde{a}^{kl}(\tilde{p})
&=\sum_{i,j}a^{ij}(p)(\delta^k_i+\delta^k_\si\delta^\ri_i)(\delta^l_j+\delta^l_\si\delta^\ri_j)
\end{align}
for $k,l\neq \ri$, yielding
\begin{align}\notag
\tilde{a}^{kl}(\tilde{p})
&=a^{kl}(p)+a^{kt}(p)\delta^l_\si+a^{\ri l}(p)\delta^k_\si+a^{\ri\ri}(p)\delta^k_\si\delta^l_\si\\\notag
&=p^k(\delta^k_l-p^l)-p^k p^\ri\delta^l_\si-p^\ri p^l\delta^k_\si+p^\ri(1-p^\ri)\delta^k_\si\delta^l_\si\\
&=\tilde{p}^k(\delta^k_l-\tilde{p}^l)%\qquad\text{for $k,l\in\set{j_1,\dotsc,j_m}\smin\set{\ri}$},
\end{align}
for $k,l\in\set{i_{1},\dotsc,i_{m}}\smin\set{\ri}$ using the given form of the $a^{ij}$, whereas for all other index pairs not containing the index~$t$, we always have
\begin{align}
a^{kt}(p)\delta^l_\si=a^{\ri l}(p)\delta^k_\si=a^{\ri\ri}(p)\delta^k_\si\delta^l_\si=0
\end{align}
and hence
\begin{align}
\tilde{a}^{kl}(\tilde{p})=\sum_{i,j}a^{ij}(p)\delta^k_i\delta^l_j=a^{kl}(p),%\quad\text{for $k,l\neq \si,\ri$},
\end{align}
thus proving the last statement. Furthermore,  we have for arbitrary $l\neq \ri$
\begin{align}\notag
\tilde{a}^{\ri l}(\tilde{p})&=\sum_{i,j}a^{ij}(p)\bigg(\frac{1-\tilde{p}^\ri}{\tilde{p}^\si}\delta^\ri_i-\frac{\tilde{p}^\ri}{\tilde{p}^\si}\delta^\si_i\bigg)(\delta^l_j+\delta^l_\si\delta^\ri_j)\\\notag
&=\frac{1-\tilde{p}^\ri}{\tilde{p}^\si}(a^{\ri l}(p)+a^{\ri\ri}(p)\delta^l_\si )-\frac{\tilde{p}^\ri}{\tilde{p}^\si}(a^{\si l}(p)+a^{\si t}(p)\delta^l_\si )\\\notag
&=\Big(-\frac{1-\tilde{p}^\ri}{\tilde{p}^\si}\tilde{p}^\si\tilde{p}^\ri \tilde{p}^l+\frac{\tilde{p}^\ri}{\tilde{p}^\si}(1-\tilde{p}^\ri)\tilde{p}^\si \tilde{p}^l\Big)\ind_{\set{i_{1},\dotsc,i_{m}}}(l)\\ 
&\quad-\frac{\tilde{p}^\ri}{\tilde{p}^\si}\tilde{p}^\si(1-\tilde{p}^\ri)\delta^l_\si+ \Big(\frac{1-\tilde{p}^\ri}{\tilde{p}^\si}\tilde{p}^\si\tilde{p}^\ri(1-\tilde{p}^\si\tilde{p}^\ri)
+\frac{\tilde{p}^\ri}{\tilde{p}^\si}\tilde{p}^\si(1-\tilde{p}^\ri)\tilde{p}^\si\tilde{p}^\ri\Big)\delta^l_\si=0%
%\quad\text{for $l\neq \ri$}
\end{align}
as well as $\tilde{a}^{l\ri}=0$ ($l\neq \ri$) by symmetry and eventually
\begin{align}\notag
\tilde{a}^{\ri\ri}(\tilde{p})
&=\sum_{i,j}a^{ij}(p)\bigg(\frac{1-\tilde{p}^\ri}{\tilde{p}^\si}\delta^\ri_i-\frac{\tilde{p}^\ri}{\tilde{p}^\si}\delta^\si_i\bigg)\bigg(\frac{1-\tilde{p}^\ri}{\tilde{p}^\si}\delta^\ri_j-\frac{\tilde{p}^\ri}{\tilde{p}^\si}\delta^\si_j\bigg)\\\notag
&=a^{\ri\ri}(p)\bigg(\frac{1-\tilde{p}^\ri}{\tilde{p}^\si}\bigg)^2
+a^{\si \si}(p)\bigg(\frac{\tilde{p}^\ri}{\tilde{p}^\si}\bigg)^2
-2a^{\ri\si}(p)\frac{\tilde{p}^\ri(1-\tilde{p}^\ri)}{{(\tilde{p}^\si )}^2}\\\notag
&=\tilde{p}^\ri(1-\tilde{p}^\si\tilde{p}^\ri)\frac{(1-\tilde{p}^\ri)^2}{\tilde{p}^\si}
+(1-\tilde{p}^\ri)(1-\tilde{p}^\si+\tilde{p}^\si\tilde{p}^\ri)\frac{(\tilde{p}^\ri)^2}{\tilde{p}^\si}\\&\quad%
-2\tilde{p}^\si\tilde{p}^\ri(1-\tilde{p}^\ri)\frac{\tilde{p}^\ri(1-\tilde{p}^\ri)}{\tilde{p}^\si}
=\frac{\tilde{p}^\ri(1-\tilde{p}^\ri)}{\tilde{p}^\si},
\end{align}
by which the form of all $\tilde{a}^{kl}$ is shown.

When checking for possible additional first order derivatives, it is obvious that the second order coordinate derivatives do not vanish at first glance only for $\tilde{p}^\ri$. But we have (\cf equation~\eqref{eq_blowup_trans_t_derived})
\begin{align}
\dd{}{p^j}\dd{}{p^i}\tilde{p}^\ri
&=\frac2{(p^\si+p^\ri)^3}(p^\ri\delta^\si_i-p^\si\delta^\ri_i)(\delta^\si_j+\delta^\ri_j)
+\frac1{(p^\si+p^\ri)^2}(\delta^\ri_i\delta^\si_j-\delta^\si_i\delta^\ri_j)
\end{align}
and subsequently
\begin{align}\notag
\sum_{i,j}a^{ij}\dd{}{p^i}\dd{}{p^j}\tilde{p}^\ri
&=\frac{2}{(p^\si+p^\ri)^3}\big(p^\ri(a^{\si \si}+ a^{\si \ri})-p^\si (a^{\ri\si}+a^{\ri\ri})\big)+\frac{1}{(p^\si+p^\ri)^2}(a^{\ri\si}-a^{\si \ri})\\
&=\frac{2}{(p^\si+p^\ri)^3}\big(p^\ri p^\si(1-p^\si-p^\ri)+p^\si p^\ri(p^\si-1+p^\ri)\big)=0,
\end{align}
for which again the specified form of the appearing $a^{ij}$ is needed.

If $\tilde{p}^\ri$ is chosen with different orientation as in remark~\ref{rmk_blowup_trans}, instead of equation~\eqref{eq_blowup_trans_t_derived} we then have
\begin{align}
\dd{\tilde{p}^\ri}{p^i}
=\frac{\tilde{p}^\ri}{\tilde{p}^\si}\delta^\ri_i-\frac{1-\tilde{p}^\ri}{\tilde{p}^\si}\delta^\si_i,
\end{align}
signifying that in the respective formulae the indices $\si$ and $\ri$ are swapped, which in turn is matched by the corresponding inverse transformation now yielding $p^\si=\tilde{p}^\si\tilde{p}^\ri$ and $p^\ri=\tilde{p}^\si(1-\tilde{p}^\ri)$.
\end{proof}

Combining the preceding results, we obtain for an iterated application of the blow-up transformation\index{blow-up transformation!iterated}:

\begin{prop}\label{prop_blowup_trans_total}
Let $k,n\in\N$ with $0\leq k\leq n-2$, $\set{i_k,i_{k+1},\dotsc,i_n}\subset I_n\ce\set{0,1,\dotsc,n}$ with $i_i\neq i_j$ for $i\neq j$ and $I_d\ce I_n \smin\set{i_{d+1},\dotsc,i_n}$ for $d=k,\dotsc,n-1$.
%Let $I_n\ce\set{0,1,\dotsc,n}$ with $n\in\N$ be an index set and let $\set{i_0,\dotsc,i_n}$ be an ordering of $I_n$ with \wlg $i_0=0$.
A repeated blow-up transformation $\Phi^{\si_{n-k-1}}_{\ri_{n-k-1}}\circ\dotso\circ\Phi^{\si_1}_{\ri_{1}}$ with $\Phi^{\si_\m}_{\ri_\m}$ as in lemma~\ref{lem_blowup_trans} with $\si_\m=i_{n-\m}$ and $\ri_\m=i_{n-\m+1}$ for $\m=1,\dotsc,n-k-1$ maps $\cl{\D_{k+1}^{(I_{k+1})}}$ onto itself and
\begin{align}\label{eq_diffeom_dimwise}
\D_d^{(I_d)}\longmapsto\D_{k+1}^{(I_{k+1})}\times\cube_{d-k-1}^{(I_d\smin I_{k+1})}\quad\text{for $d=k+2,\dotsc,n$}
\end{align}
and altogether
\begin{align}\label{eq_diffeom_eventually}
\cl{\D_n^{(I_n)}}
\longmapsto
\Big(\cl{\D_{k+1}^{(I_{k+1})}}\times\cl{\cube_{n-k-1}^{(I_{n}\smin I_{k+1})}}\Big)\smin\bigcup_{j=k+1}^{n-1}N_{j}.
\end{align}
%converts $\cl{\D_n^{(I_n)}}$ into $\cl{\D_{k+1}^{(I_{k+1})}}\times\cl{\cube_{n-k-1}^{(I_{n}\smin I_{k+1})}}$
%\begin{align}
%\cl{\D_1^{(\set{i_0,i_n})}}\times\cl{\cube_{n-1}^{(I_n\smin\set{i_0,i_n})}}
%\equiv\cl{\cube_{n\phantom{1}}^{(I^\prime_n)}}
%\end{align}
%with $\Phi^{\si_{n-k-1}}_{\ri_{n-k-1}}\circ\dotso\circ\Phi^{\si_1}_{n_1}$ mapping
The $n-k-1$ additional $(n-1)$-dimen\-sional faces\index{cube!additional face}\sindex[not]{Nr@$N_\ri$} $N_{k+1},\dotsc,N_{n-1}$ of $\cl{\D_{k+1}^{(I_{k+1})}}\times\cl{\cube_{n-k-1}^{(I_{n}\smin I_{k+1})}}$ are given by
% {muss noch angepasst werden: $(0,1]^{I_{j-1}\smin I_{k+1}}\times\set{0}\times\sum<1$}
% \begin{align}
%  N_{k+1}={\D_k^{(I_{k})}}\times\set{0}^{(\set{i_{k+1}})}\times\cl{\boxtimes}_{n-k-1}^{(I_{n}\smin I_{k+1})}
% \end{align}
\begin{gather}\label{eq_new_face_2}
 N_{k+1}=\cl{\D_k^{(I_{k})}}\times\set{0}^{(\set{i_{k+1}})}\times\cl{\cubex_{n-k-1}^{(I_{n}\smin I_{k+1})}}\\
\intertext{and}
\label{eq_new_face_1}
 N_{j}=\cl{\D_{k+1}^{(I_{k+1})}}\times\cl{\cube_{j-k-2}^{(I_{j-1}\smin I_{k+1})}}\times\set{0}^{(\set{i_{j}})}
\times\cl{\cubex_{n-j}^{(I_{n}\smin I_{j})}}%\quad\text{for $j=k+2,\dotsc,n-1$}
\end{gather}
for $j=k+2,\dotsc,n-1$. 
% \begin{align}\label{eq_new_face_3}
% & N_{n-1}=\cl{\D_{k+1}^{(I_{k+1})}}\times\cl{\cube_{n-k-3}^{(I_{n-2}\smin I_{k+1})}}\times\set{0}^{(\set{i_{n-1}})}
% \times{\hcube_{1}^{(\set{i_n})}}.
% \end{align}
% \begin{align}\label{eq_new_face_2}
%  N_{1}&=\set{0}^{(\set{i_n})}\times\cl{\cube_{n-1\phantom{1}}^{(I_n\smin\set{i_0,i_n})}}
% \end{align}
Simultaneously, 
%a function $U\in C_p^\infty\Big(\bigcup_{ k \leq d\leq n}\D_d^{(I_d)}\Big)$ is converted into a function $\tilde{U}\in C_p^\infty\Big(\D_k^{(I_k)}\cup\bigcup_{k+1 \leq d\leq n}\big(\D_{k+1}^{(I_{k+1})}\times\cube_{d-k-1}^{(I^\prime_{d-k-1})}\big)\Big)$, whereas
the operator $L^*=\sum p^i(\delta^i_j-p^j)\dd{}{p^i}\dd{}{p^j}$ in $\cl{\D_n^{(I_n)}}$ is transformed into%
\footnote{%
Please note that on boundary instances of $\cube_{n-k-1}^{(I_{n}\smin I_{k+1})}$, \ie $\tilde{p}^{i_l}=0$ for some $l\in I_n\smin I_{k+1}$, the corresponding summands are assumed not to appear in the right sum in equation~\eqref{eq_L_cube}, which may be interpreted as a result of a successive restriction. The given domain is the maximal domain for the operator as it is not defined on the exception set $\bigcup_{j=k+1}^{n-1}N_{j}$ (however, \cf also lemma~\ref{lem_stem_cube} for the stationary case).
}%
\begin{align}\label{eq_L_cube}
\tilde{L}^*=\half\sum_{j,l=1}^{k+1}{\tilde{p}}^{i_j}(\delta^j_l-{\tilde{p}}^{i_l})\dd{}{{\tilde{p}}^{i_j}}\dd{}{{\tilde{p}}^{i_l}}
+\half\sum_{j=k+2}^{n} \frac{\tilde{p}^{i_j}(1-\tilde{p}^{i_j})}{\prod_{l=k+1}^{j-1} \tilde{p}^{i_l}}\ddsq{}{\tilde{p}^{i_j}}
\end{align}
in $\Big(\cl{\D_{k+1}^{(I_{k+1})}}\times\cl{\cube_{n-k-1}^{(I_{n}\smin I_{k+1})}}\Big)\smin\bigcup_{j=k+1}^{n-1}N_{j}$.%

If in any step the coordinate $\tilde{p}^{\ri_j}$ is chosen with alternative orientation (\cf remark~\ref{rmk_blowup_trans}), all appearances of $\tilde{p}^{\ri_j}$ in the above formulae are replaced by $(1-\tilde{p}^{\ri_j})$.
\end{prop}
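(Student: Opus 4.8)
The plan is to argue by induction on the number $N\ce n-k-1$ of blow-up steps, peeling off one transformation at a time; a single step is governed completely by Lemma~\ref{lem_blowup_trans} (image of the domain and of its interior), Corollary~\ref{cor_blowup_trans} (images of the lower faces) and Lemma~\ref{lem_blowup_trans_op} (the operator), so the real substance is the bookkeeping of index sets, cube factors and the successively created exceptional faces $N_j$. One first notes that Lemmata~\ref{lem_blowup_trans} and~\ref{lem_blowup_trans_op}, though phrased for $I_d=\set{0,\dotsc,d}$, apply verbatim to an arbitrary ordered index set, this being merely a relabelling of coordinates. For the base case $N=1$ (so $k=n-2$) the composition is the single map $\Phi^{\si_1}_{\ri_1}=\Phi^{i_{n-1}}_{i_n}$: Lemma~\ref{lem_blowup_trans} with $d=n$, $\si=i_{n-1}$, $\ri=i_n$ gives~\eqref{eq_diffeom_eventually} and its gap, which --- after the identifications $I_{n-1}=I_n\smin\set{i_n}$, $I_{n-2}=I_n\smin\set{i_{n-1},i_n}$ and $\cl{\cube_1^{(I_n\smin I_{n-1})}}=\cl{\cube_1^{(\set{i_n})}}$ --- is exactly $N_{k+1}$ of~\eqref{eq_new_face_2}; Corollary~\ref{cor_blowup_trans} gives the face images, hence~\eqref{eq_diffeom_dimwise} and that $\cl{\D_{k+1}^{(I_{k+1})}}$ is fixed; and Lemma~\ref{lem_blowup_trans_op} with $m=n$ yields~\eqref{eq_L_cube}, reading $\prod_{l=n-1}^{n-1}\tilde p^{i_l}=\tilde p^{i_{n-1}}$.

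For the inductive step ($N\geq2$) I would split the composition as
\[
 \Phi^{\si_{N}}_{\ri_{N}}\circ\dotsb\circ\Phi^{\si_1}_{\ri_1}
 =\Phi^{i_{k+1}}_{i_{k+2}}\circ\bigl(\Phi^{i_{k+2}}_{i_{k+3}}\circ\dotsb\circ\Phi^{i_{n-1}}_{i_n}\bigr),
\]
where the inner composition is precisely the map of the proposition with $k$ replaced by $k+1$ (same $n$, same ordering). By the induction hypothesis it sends $\cl{\D_n^{(I_n)}}$ onto $\bigl(\cl{\D_{k+2}^{(I_{k+2})}}\times\cl{\cube_{n-k-2}^{(I_n\smin I_{k+2})}}\bigr)\smin\bigcup_{j=k+2}^{n-1}N_j$, realises the diffeomorphisms $\D_d^{(I_d)}\to\D_{k+2}^{(I_{k+2})}\times\cube_{d-k-2}^{(I_d\smin I_{k+2})}$ for $d\geq k+2$, fixes $\cl{\D_{k+2}^{(I_{k+2})}}$, and transforms $L^*$ into~\eqref{eq_L_cube} with $k+1$ in place of $k$. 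It then remains to apply $\Phi^{i_{k+1}}_{i_{k+2}}$, which moves only the coordinates $p^{i_{k+1}},p^{i_{k+2}}$ --- both simplex coordinates of the factor $\cl{\D_{k+2}^{(I_{k+2})}}$ --- and acts as the identity on the cube factor.

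For the operator, Lemma~\ref{lem_blowup_trans_op} applies with $\si=i_{k+1}$, $\ri=i_{k+2}$ and ``nice'' block $\set{i_1,\dotsc,i_{k+2}}$ (which the inductive form of~\eqref{eq_L_cube} supplies); it turns the coefficient of $\ddsqb{}{\tilde p^{i_{k+2}}}$ from $\tilde p^{i_{k+2}}(1-\tilde p^{i_{k+2}})$ into $\frac{\tilde p^{i_{k+2}}(1-\tilde p^{i_{k+2}})}{\tilde p^{i_{k+1}}}$, leaves the $\set{i_1,\dotsc,i_{k+1}}$-block in standard simplex form, produces no first-order terms, and in every remaining (cube) coefficient $\frac{\tilde p^{i_j}(1-\tilde p^{i_j})}{\prod_{l=k+2}^{j-1}\tilde p^{i_l}}$ ($j\geq k+3$) the inverse substitution $p^{i_{k+2}}=\tilde p^{i_{k+1}}\tilde p^{i_{k+2}}$ upgrades $\prod_{l=k+2}^{j-1}$ to $\prod_{l=k+1}^{j-1}$; reassembling the summands is precisely~\eqref{eq_L_cube}. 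For the domains, $\cl{\D_{k+1}^{(I_{k+1})}}$ lies in the common zero set of the coordinates indexed by $i_{k+2},\dotsc,i_n$, which are precisely the target indices $\ri_1,\dotsc,\ri_N$ of the $N$ maps; on that set every $\Phi^\si_\ri$ --- with the convention $\tilde p^\ri\ce0$ for $p^\si+p^\ri=0$ --- reduces to the identity, so $\cl{\D_{k+1}^{(I_{k+1})}}$ is fixed. Assertion~\eqref{eq_diffeom_dimwise} then follows by composing the interior diffeomorphism $\D_{k+2}^{(I_{k+2})}\to\D_{k+1}^{(I_{k+1})}\times\cube_1^{(\set{i_{k+2}})}$ of Lemma~\ref{lem_blowup_trans} with the inherited cube factor, using $I_d\smin I_{k+1}=\set{i_{k+2}}\cup(I_d\smin I_{k+2})$. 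Finally, for~\eqref{eq_diffeom_eventually}, Corollary~\ref{cor_blowup_trans} applied to $\cl{\D_{k+2}^{(I_{k+2})}}$ opens the gap $N_{i_{k+1}}=\cl{\D_k^{(I_k)}}\times\set{0}^{(\set{i_{k+1}})}\times\cl{\cubex_1^{(\set{i_{k+2}})}}$, whose simplex face stays fixed; enlarging it by the locus where~\eqref{eq_L_cube} acquires a pole --- namely $\tilde p^{i_{k+1}}=0$ with some later cube coordinate positive, a successive restriction as in the footnote to~\eqref{eq_L_cube} --- completes the cube exponent $\cl{\cubex_1^{(\set{i_{k+2}})}}$ to $\cl{\cubex_{n-k-1}^{(I_n\smin I_{k+1})}}$, giving $N_{k+1}$ of~\eqref{eq_new_face_2}; and each inherited gap $N_j$ ($k+2\leq j\leq n-1$), carrying the simplex factor $\cl{\D_{j-1}^{(I_{j-1})}}$, has that factor blown up by $\Phi^{i_{k+1}}_{i_{k+2}}$ (again Corollary~\ref{cor_blowup_trans}) to $\cl{\D_{k+1}^{(I_{k+1})}}\times\cl{\cube_{j-k-2}^{(I_{j-1}\smin I_{k+1})}}$, while $\set{0}^{(\set{i_j})}$ and the trailing punctured cube $\cl{\cubex_{n-j}^{(I_n\smin I_j)}}$ survive, yielding~\eqref{eq_new_face_1}.

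The operator computation is the routine part --- one invocation of Lemma~\ref{lem_blowup_trans_op} per step, the only points being that the nice-form hypothesis persists on the current simplex block and that no first-order terms ever arise. The main obstacle is the geometry behind~\eqref{eq_diffeom_eventually}: one has to track each of the $N$ successively opened gaps through all later blow-ups and, in particular, reconcile the ``thin'' gap $\cl{\cubex_1^{(\set{\ri})}}\times\cl{\cube_{\bullet}}$ produced by a single transformation with the ``full'' punctured cube $\cl{\cubex_{\bullet}}$ that appears in~\eqref{eq_new_face_2}--\eqref{eq_new_face_1}, verifying that the difference is precisely what the poles of $\tilde L^*$ expel from its maximal domain, so that the complement of the image of $\cl{\D_n^{(I_n)}}$ together with all the transported gaps equals exactly $\bigcup_{j=k+1}^{n-1}N_j$. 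The statement about choosing some $\tilde p^{\ri_j}$ with the opposite orientation then follows by carrying the corresponding clauses of Remark~\ref{rmk_blowup_trans} and of Lemma~\ref{lem_blowup_trans_op} through every step of the induction.
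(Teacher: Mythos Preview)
Your strategy is essentially the paper's: an induction on the number of blow-up steps, each step controlled by Lemma~\ref{lem_blowup_trans}/Corollary~\ref{cor_blowup_trans} for the domain and Lemma~\ref{lem_blowup_trans_op} for the operator; the only cosmetic difference is that the paper builds forward (from the state after $m$ steps to the state after $m+1$, with $k,n$ fixed throughout) whereas you peel off the outermost map and invoke the proposition for $k+1$ in place of $k$.

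One bookkeeping slip to fix: by your own induction hypothesis (the proposition with $k$ replaced by $k+1$), the inherited gap $N_j$ for $j\ge k+3$ carries the simplex factor $\cl{\D_{k+2}^{(I_{k+2})}}$ (and $N_{k+2}$ carries $\cl{\D_{k+1}^{(I_{k+1})}}$), not $\cl{\D_{j-1}^{(I_{j-1})}}$. A single application of $\Phi^{i_{k+1}}_{i_{k+2}}$ cannot take $\cl{\D_{j-1}}$ to $\cl{\D_{k+1}}\times\cl{\cube_{j-k-2}}$; what actually happens is that $\cl{\D_{k+2}^{(I_{k+2})}}$ is sent to $\cl{\D_{k+1}^{(I_{k+1})}}\times\cl{\cube_1^{(\set{i_{k+2}})}}$ (minus a piece that is absorbed into $N_{k+1}$), and this extra $\cl{\cube_1^{(\set{i_{k+2}})}}$ then merges with the existing $\cl{\cube_{j-k-3}^{(I_{j-1}\smin I_{k+2})}}$ to give the $\cl{\cube_{j-k-2}^{(I_{j-1}\smin I_{k+1})}}$ in~\eqref{eq_new_face_1}. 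Your conclusion is correct; only the intermediate description needs adjusting.
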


Thus, the iterated blow-up\index{blow-up transformation!iterated} translates the (extended) \index{Kolmob@\KBE!extended ($n$-dim)}\KBE in $\cl{\D}_n$ into a corresponding differential equation in  $\Big(\cl{\D_{k+1}^{(I_{k+1})}}\times\cl{\cube_{n-k-1}^{(I_{n}\smin I_{k+1})}}\Big)\smin\bigcup_{j=k+1}^{n-1}N_{j}$. For the successively extended solutions of the \KBE introduced in the preceding chapter, the transformation behaviour is as follows:

\begin{prop}\label{prop_blowup_solution}
Let $k,n\in\N$ with $0\leq k\leq n-2$, $\set{i_k,i_{k+1},\dotsc,i_n}\subset I_n\ce\set{0,1,\dotsc,n}$ with $i_i\neq i_j$ for $i\neq j$  and $I_d\ce I_n \smin\set{i_{d+1},\dotsc,i_n}$ for $d=k,\dotsc,n-1$, and let $u_{I_k}$ in $\big({\D_k^{(I_k)}}\big)_{-\infty}$  and $\bar{U}_{I_k}^{i_k,\dotsc,i_{n}}$ in $\Big(\bigcup_{ k \leq d\leq n}\D_d^{(I_d)}\Big)_{-\infty}$ as in proposition~\ref{prop_ext_iter}. Then a repeated {blow-up transformation}\index{blow-up transformation!iterated} $\Phi^{\si_{n-k-1}}_{\ri_{n-k-1}}\circ\dotso\circ\Phi^{\si_1}_{\ri_1}$ with $\Phi^{\si_\m}_{\ri_\m}$ as in lemma~\ref{lem_blowup_trans} with $\si_\m=i_{n-\m}$ and $\ri_\m=i_{n-\m+1}$ for $\m=1,\dotsc,n-k-1$ converts %both%the solution
\begin{align}\notag
\bar{U}_{I_k}^{i_k,\dotsc,i_{n}}(p,t) &\ce u_{I_k}(p,t)\ind_{\D_k^{(I_k)}}(p)+\sum_{ k+1\leq d\leq n}\bar{u}_{I_k}^{i_k,\dotsc,i_{d}}(p,t)\ind_{\D_d^{(I_d)}}(p)\\
&=u_{I_k}(p,t)\ind_{\D_k^{(I_k)}}(p)+\sum_{ k+1\leq d\leq n} u_{I_k}(\pi^{i_k,\dotsc,i_d}(p),t)\prod_{j=k}^{d-1} \frac{p^{i_j}}{\sum_{l=j}^d p^{i_l}} \ind_{\D_d^{(I_d)}}(p)%\quad\text{in $\Big(\bigcup_{\substack{I_d\subset I_n\\ k\leq d\leq n}}\D_d^{(I_d)}\Big)_{-\infty}$}
\end{align}
% 
% \begin{align}
% \bar{U}_{\set{i_0}}^{i_0,i_1\dotsc,i_{n-2},{i_{n-1}},i_n}(p)
% &=u_{\set{i_0}}(p)\ind_{\D_0^{(\set{i_0})}}(p)+\sum_{0+1\leq d\leq n}\bar{u}_{\set{i_0}}^{i_0,\dotsc,i_{d}}(p,t)\ind_{\D_d^{(I_d)}}(p)\\
% &=u_{\set{i_0}}(p)\ind_{\D_0^{({\set{i_0}})}}(p)+\sum_{1\leq d\leq n} u_{\set{i_0}}(\pi^{i_,\dotsc,i_d}(p),t)\prod_{j=k}^{d-1} \frac{p^{i_j}}{\sum_{l=j}^d p^{i_l}} \ind_{\D_d^{(I_d)}}(p)
% \end{align}
\text{on $\Big(\bigcup_{ k \leq d\leq n}\D_d^{(I_d)}\Big)_{-\infty}$} into
\begin{multline}
\tilde{U}_{I_k}^{i_k,i_{k+1};i_{k+2},\dotsc,i_{n}}(\tilde{p},t)\ce%&
u_{I_k}(\tilde{p},t)\ind_{\D_k^{(I_k)}}(\tilde{p})\\
%+\bar{u}^{i_k,i_{k+1}}_{I_{k}}(\tilde{p})\ind_{\D_{k+1}^{(I_{k+1})}}(\tilde{p},t)
+\sum_{k+1\leq d\leq n}\tilde{u}_{I_k}^{i_k,i_{k+1};i_{k+2},\dotsc,i_{d}}(\tilde{p},t)\ind_{{\D_{k+1}^{(I_{k+1})}}\times\cube^{(I_d\smin I_{k+1})}_{d-k-1}}(\tilde{p})
\end{multline}
\text{on $\Big(\bigcup_{ k \leq d\leq n}{\D_{k+1}^{(I_{k+1})}}\times{\cube_{n-k-1}^{(I_{n}\smin I_{k+1})}}\Big)_{-\infty}$} with
\begin{align}
%\bar{u}^{i_k,i_{k+1}}_{I_{k}}(\tilde{p})\ind_{\D_{k+1}^{(I_{k+1})}}(\tilde{p},t)%
%u_{I_k}(\tilde{p},t)\ind_{\D_k^{(I_k)}}(\tilde{p})
\tilde{u}_{I_k}^{i_k,i_{k+1};i_{k+2},\dotsc,i_{d}}(\tilde{p},t)
\ce\bar{u}^{i_k,i_{k+1}}_{I_k}(\tilde\pi^{i_{k+1}}(\tilde{p}),t)\prod_{j=k+2}^{d} (1-\tilde{p}^{i_j})\quad\text{for $d=k+2,\dotsc,n$}
% \ind_{{\D_{k+1}^{(I_{k+1})}}\times\cube^{(I_d\smin I_{k+1})}_{d-k-1}}(\tilde{p})
\end{align}
with $\tilde{\pi}^{i_{k+1}}(\tilde{p}^{i_j})\ce\tilde{p}^{i_j}$ for $i_j\in I_{k+1}$, $\tilde{\pi}^{i_{k-1}}(\tilde{p}^{i_j})\ce 0$ else. The transformed functions
$\tilde{u}_{I_k}^{i_k,i_{k+1};i_{k+2},\dotsc,i_{d}}$ smoothly extend to $\Big(\cl{\D_{k+1}^{(I_{k+1})}}\times\cl{\cube^{(I_d\smin I_{k+1})}_{d-k-1}}\Big)_{-\infty}$ respectively; consequently  also $\tilde{U}_{I_k}^{i_k,i_{k+1};i_{k+2},\dotsc,i_{n}}$ smoothly extends to $\Big(\cl{\D_{k+1}^{(I_{k+1})}}\times\cl{\cube_{n-k-1}^{(I_{n}\smin I_{k+1})}}\Big)_{-\infty}$. % and is of class $C^\infty$ with respect to the spatial variables there. 
Furthermore, it may be simplified to
\begin{align}\label{eq_blowup_sol_sim}
\tilde{U}_{I_k}^{i_k,i_{k+1};i_{k+2},\dotsc,i_{n}}(\tilde{p},t)\equiv%&
%u_{I_k}(\tilde{p},t)\ind_{\D_k^{(I_k)}}(\tilde{p})
%+\bar{u}^{i_k,i_{k+1}}_{I_{k}}(\tilde{p})\ind_{\D_{k+1}^{(I_{k+1})}}(\tilde{p},t)
\tilde{u}_{I_k}^{i_k,i_{k+1};i_{k+2},\dotsc,i_{n}}(\tilde{p},t)%\ind_{\cl{\D_{k+1}^{(I_{k+1})}}\times\cl{\cube^{(I_n\smin I_{k+1})}_{n-k-1}}}(\tilde{p})
\quad\text{in $\Big({\cl{\D_{k+1}^{(I_{k+1})}}\times\cl{\cube^{(I_n\smin I_{k+1})}_{n-k-1}}}\Big)_{-\infty}$}.
\end{align}
If in any step the coordinate $\tilde{p}^{\ri_j}$ is chosen with alternative orientation (\cf remark~\ref{rmk_blowup_trans}), all appearances of $\tilde{p}^{\ri_j}$ in the above formulae need to be replaced with $(1-\tilde{p}^{\ri_j})$.
\end{prop}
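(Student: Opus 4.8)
The plan is to deduce Proposition~\ref{prop_blowup_solution} essentially by tracking how the explicit formula \eqref{eq_path_ext} behaves under the coordinate change $\Phi^{\si_{n-k-1}}_{\ri_{n-k-1}}\circ\dotso\circ\Phi^{\si_1}_{\ri_1}$, exploiting the geometric mapping properties already established in Proposition~\ref{prop_blowup_trans_total} and Corollary~\ref{cor_blowup_trans}. First I would fix a component $\bar{u}_{I_k}^{i_k,\dotsc,i_d}$ of $\bar{U}_{I_k}^{i_k,\dotsc,i_n}$, living on $\D_d^{(I_d)}$, and observe from \eqref{eq_diffeom_dimwise} that the iterated blow-up carries $\D_d^{(I_d)}$ diffeomorphically onto $\D_{k+1}^{(I_{k+1})}\times\cube_{d-k-1}^{(I_d\smin I_{k+1})}$; the first $d-k-1$ of the transformations $\Phi^{\si_\m}_{\ri_\m}$ (those with $\m\le d-k-1$) act nontrivially on this stratum, while the later ones fix it pointwise. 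So the claim reduces to computing, for each fixed $d$, the pullback of the single term $u_{I_k}(\pi^{i_k,\dotsc,i_d}(p),t)\prod_{j=k}^{d-1}\frac{p^{i_j}}{\sum_{l=j}^d p^{i_l}}$ under $(\Phi^{\si_1}_{\ri_1})^{-1}\circ\dotso$ restricted to that stratum.

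The core computation is then purely algebraic. Using the inverse formulae \eqref{eq_blowup_inverse}, one has at each step $\m$ (with $\si_\m=i_{n-\m}$, $\ri_\m=i_{n-\m+1}$) that $p^{\si_\m}=\tilde{p}^{\si_\m}(1-\tilde{p}^{\ri_\m})$ and $p^{\ri_\m}=\tilde{p}^{\si_\m}\tilde{p}^{\ri_\m}$, so the partial sums $\sum_{l=j}^d p^{i_l}$ telescope nicely: each such sum equals a product of the $\tilde{p}^{\si_\m}$-type variables, and the ratios $\frac{p^{i_j}}{\sum_{l=j}^d p^{i_l}}$ collapse to the simple factors $(1-\tilde{p}^{i_j})$ appearing in the asserted formula $\tilde{u}_{I_k}^{i_k,i_{k+1};i_{k+2},\dotsc,i_d}=\bar{u}^{i_k,i_{k+1}}_{I_k}(\tilde\pi^{i_{k+1}}(\tilde{p}),t)\prod_{j=k+2}^d(1-\tilde{p}^{i_j})$. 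Concurrently, the projection $\pi^{i_k,\dotsc,i_d}$, which sends $p^{i_k}+\dotso+p^{i_d}$ to the $i_k$-slot and zeroes the others, is transformed into the projection $\tilde\pi^{i_{k+1}}$ keeping only the $I_{k+1}$-coordinates — this is exactly the statement that after the blow-ups the only surviving simplex directions beyond $i_k$ are those in $I_{k+1}$, i.e.\ $i_{k+1}$. Here one also uses that $u_{I_k}$ depends only on the coordinates indexed by $I_k$, which the transformations $\Phi^{\si_\m}_{\ri_\m}$ with $\si_\m,\ri_\m\notin I_k$ leave untouched except through the single combined variable.

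The smooth extendability to $\cl{\D_{k+1}^{(I_{k+1})}}\times\cl{\cube_{d-k-1}^{(I_d\smin I_{k+1})}}$ is where the whole scheme pays off: in the original coordinates $\bar{u}_{I_k}^{i_k,\dotsc,i_d}$ fails to be continuous at $\cl{\D_{d-2}^{(I_{d-2})}}$ precisely because the denominators $\sum_{l=j}^d p^{i_l}$ vanish there, but after the transformation those denominators have been divided out and the formula $\bar{u}^{i_k,i_{k+1}}_{I_k}(\tilde\pi^{i_{k+1}}(\tilde{p}),t)\prod_{j=k+2}^d(1-\tilde{p}^{i_j})$ is manifestly $C^\infty$ up to the closed product domain — its first factor is a pulled-back smooth function of the $\D_{k+1}$-variables only (smoothness of $\bar u^{i_k,i_{k+1}}_{I_k}$ up to $\bd_{k}\D_{k+1}^{(I_{k+1})}$ being part of Proposition~\ref{prop_ext_iter}), and the second factor is a polynomial in the cube coordinates. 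The gluing of the $d$-indexed pieces into one smooth function $\tilde{U}_{I_k}^{i_k,\dotsc,i_n}$ then follows from Corollary~\ref{cor_blowup_trans}, which identifies the image of each old face $\D_d^{(I_d)}$ with a boundary face $\tilde{p}^{i_{d+1}}=\dotso=\tilde{p}^{i_n}=0$ of the big product, together with the matching relation $\bar u^{\dots}_{I_k}|_{\bd}=u^{\dots}_{I_k}$ from the extension constraints; evaluating the top-dimensional formula at $\tilde p^{i_{d+1}}=\dotso=0$ reproduces exactly the $d$-level formula, which gives the simplification \eqref{eq_blowup_sol_sim}. Finally the alternative-orientation remark is immediate from Remark~\ref{rmk_blowup_trans}, which just swaps $\tilde p^{\ri_j}\leftrightarrow 1-\tilde p^{\ri_j}$ throughout.

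The main obstacle I expect is bookkeeping: keeping the nested index sets $I_d$, the ordering $i_k,\dotsc,i_n$, and the order in which the $\Phi^{\si_\m}_{\ri_\m}$ act all consistent, and making sure that the telescoping of the products $\prod_{j=k}^{d-1}\frac{p^{i_j}}{\sum_{l=j}^d p^{i_l}}$ under successive substitutions is carried out in the right order (the blow-ups peel off coordinates from the high end $i_n$ downward, while the product in \eqref{eq_path_ext} is indexed from $i_k$ upward, so one should probably induct on $n-d$ or reorganize the product before substituting). A clean way to manage this is to prove the single-step transformation law first — i.e.\ how one application of $\Phi^{\si}_{\ri}$ turns $\bar{u}_{I_k}^{i_k,\dotsc,i_d}$ into the corresponding partially-blown-up object — and then iterate, so that each step only involves the two-variable identities of Lemma~\ref{lem_blowup_trans}; the operator side is already handled by Proposition~\ref{prop_blowup_trans_total}, so nothing new is needed there.
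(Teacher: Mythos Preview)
Your proposal is correct and matches the paper's approach: the paper proves Propositions~\ref{prop_blowup_trans_total} and~\ref{prop_blowup_solution} together by exactly the single-step-then-iterate scheme you outline at the end, tracking the partially transformed object $\tilde{U}_{I_k}^{i_k,\dotsc,i_{n-\m};i_{n-\m+1},\dotsc,i_n}$ after $\m$ blow-ups and verifying the inductive form of the function, the domain, and the operator in parallel. One small slip to fix when you write it out: on the stratum $\D_d^{(I_d)}$ it is the \emph{last} $d-k-1$ transformations (those with $\m\ge n-d+1$) that act nontrivially, not the first --- the early blow-ups $\Phi^{\si_\m}_{\ri_\m}$ with $\m\le n-d$ involve only the coordinates $p^{i_d},\dotsc,p^{i_n}$, which are already zero (or sent to zero) on $\D_d^{(I_d)}$, and hence fix that stratum pointwise; this is precisely the bookkeeping reversal you flagged yourself.
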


For the stationary components, we have in particular:
\begin{cor}\label{cor_blowup_solution}
For $k=0$ and \wlg $i_0=0$, the transformed function of proposition~\ref{prop_blowup_solution} in equation~\eqref{eq_blowup_sol_sim} simplifies to 
 \begin{align}\label{eq_blowup_sol_sim_0}
&\tilde{U}_{\set{i_0}}^{i_0,i_{1};i_{2},\dotsc,i_{n}}(\tilde{p})=
u_{\set{i_0}}(1)\cdot
\prod_{j=1}^{n} (1-\tilde{p}^{i_j})\quad\text{in $\cl{\cube_{n\phantom{1}}^{(I_n^\prime)}}$,}
\end{align}
while in accordance with proposition~\ref{prop_blowup_trans_total} 
% 
% Let $k,n\in\N$ with $0\leq k\leq n-1$, $\set{i_k,i_{k+1},\dotsc,i_n}\subset I_n\ce\set{0,1,\dotsc,n}$ with $i_i\neq i_j$ for $i\neq j$ and $I_d\ce I_n \smin\set{i_{d+1},\dotsc,i_n}$ for $d=k,\dotsc,n-1$ and let $u_{I_k}$ in $\big({\D_k^{(I_k)}}\big)_{-\infty}$ and $\bar{U}_{I_k}^{i_k,\dotsc,i_{n}}$ in $\Big(\bigcup_{ k \leq d\leq n}\D_d^{(I_d)}\Big)_{-\infty}$ as in proposition~\ref{prop_ext_iter}.
% Let $I_n\ce\set{0,1,\dotsc,n}$ with $n\in\N$ be an index set and let $\set{i_0,\dotsc,i_n}$ be an ordering of $I_n$ with \wlg $i_0=0$. A repeated blow-up transformation $\Phi^{\si_{n-1}}_{t_{n-1}}\circ\dotso\circ\Phi^{\si_1}_{n_1}$ with $\Phi^{\si_\m}_{t_\m}$ as in lemma~\ref{lem_blowup_trans} with $\si_\m=i_{n-\m}$ and $t_\m=i_{n-\m+1}$ for $\m=1,\dotsc,n-k-1$ 
the domain is mapped 
\begin{align}\label{eq_diffeom_dimwise_0}
\D_d^{(I_d)}\longmapsto\cube_{d}^{(I^\prime_d)}\quad\text{for $d=0,\dotsc,n$}
\end{align}
and altogether % $\cl{\D_n^{(I_n)}}$ is converted into $\cl{\cube_{n\phantom{1}}^{(I_n^\prime)}}$
%\begin{align}
%\cl{\D_1^{(\set{i_0,i_n})}}\times\cl{\cube_{n-1}^{(I_n\smin\set{i_0,i_n})}}
%\equiv\cl{\cube_{n\phantom{1}}^{(I_n^\prime)}}
%\end{align}
%with $\Phi^{\si_{n-1}}_{t_{n-1}}\circ\dotso\circ\Phi^{\si_1}_{n_1}$ mapping
\begin{align}\label{eq_diffeom_eventually_0}
\cl{\D_n^{(I_n)}}%\smin\cl{\D_{n-2}^{(I_{n-2})}}
\longmapsto
\cl{\cube_{n\phantom{1}}^{(I_n^\prime)}}\smin\bigcup_{j=1}^{n-1}N_{j}.
\end{align}
The $n-1$ additional $(n-1)$-dimen\-sional faces $N_{1},\dotsc,N_{n-1}$ of $\bd{\cube_{n\phantom{1}}^{(I_n^\prime)}}$ are given by
\begin{gather}\label{eq_new_face_1_0}		
N_{1}=\set{0}^{(\set{i_{1}})}
\times\cl{\cubex_{n-1}^{(I_{n}^\prime\smin I^\prime_{1})}}\\
\intertext{and}
N_{j}=\cl{\cube_{j-1}^{(I^\prime_{j-1})}}\times\set{0}^{(\set{i_{j}})}
\times\cl{\cubex_{n-j}^{(I_{n}^\prime\smin I^\prime_{j})}}
%\quad\text{for $j=1,\dotsc,k-2$}\\
\end{gather}
for $j=2,\dotsc,n-1$, %and
% \begin{align}
% & N_{n-1}=\cl{\cube_{n-2}^{(I_{n-2}\smin \set{i_0})}}\times\set{0}^{(\set{i_{n-1}})}
% \times{\hcube_{1}^{(\set{i_n})}},
% \end{align}
% \begin{align}\label{eq_new_face_2}
%  N_{1}&=\set{0}^{(\set{i_n})}\times\cl{\cube_{n-1\phantom{1}}^{(I_n\smin\set{i_0,i_n})}}
% \end{align}
whereas the operator $L^*=\sum p^i(\delta^i_j-p^j)\dd{}{p^i}\dd{}{p^j}$ in $\cl{\D_n^{(I_n)}}$
 is transformed into
\begin{align}\label{eq_L_cube_0}
%L^*_{\cube_n}\ce
\tilde{L}^*=\half\sum_{j=1}^{n} \frac{\tilde{p}^{i_j}(1-\tilde{p}^{i_j})}{\prod_{l=1}^{j-1} \tilde{p}^{i_l}}\ddsq{}{\tilde{p}^{i_j}}
\quad\text{in $\cl{\cube_{n\phantom{1}}^{(I_n^\prime)}}\smin\bigcup_{j=1}^{n-1}N_{j}$.}
\end{align}
%If in any step the coordinate $\tilde{p}^{t_j}$ is chosen with alternative orientation (\cf remark~\ref{rmk_blowup_trans}) all appearances of $\tilde{p}^{t_j}$ in the above formulae are replaced by $(1-\tilde{p}^{t_j})$.
\end{cor}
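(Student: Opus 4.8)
The plan is to derive Corollary~\ref{cor_blowup_solution} as the special case $k=0$, $i_0=0$ of Propositions~\ref{prop_blowup_trans_total} and~\ref{prop_blowup_solution}, checking that the various index-set objects collapse to the claimed cube expressions. First I would substitute $k=0$ into the domain statement~\eqref{eq_diffeom_eventually}: since $\cl{\D_{k+1}^{(I_{k+1})}}=\cl{\D_1^{(\set{0,i_1})}}$ and the blow-up $\Phi^{\si_{n-1}}_{\ri_{n-1}}$ with $\si_{n-1}=i_1$, $\ri_{n-1}=i_0=0$ turns this last simplex factor into a $\cube_1$-direction (this is exactly one more application of Lemma~\ref{lem_blowup_trans}, now legitimate because in the stationary case $L^*_0\equiv0$ on $\cl{\D}_1$, so the extra degeneracy introduced on $\D_0$ is harmless — see the footnote to~\eqref{eq_L_cube} and Lemma~\ref{lem_stem_cube}), one gets $\cl{\D_1^{(\set{0,i_1})}}\times\cl{\cube_{n-1}^{(I_n\smin I_1)}}\mapsto \cl{\cube_n^{(I_n^\prime)}}$, hence~\eqref{eq_diffeom_eventually_0}. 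The dimension-wise statement~\eqref{eq_diffeom_dimwise_0} follows the same way from~\eqref{eq_diffeom_dimwise}, using that $\D_d^{(I_d)}\mapsto\D_1^{(I_1)}\times\cube_{d-1}^{(I_d\smin I_1)}$ and then the last blow-up converts the remaining $1$-simplex into $\cube_1^{(\set{i_1})}$, giving $\cube_d^{(I_d^\prime)}$. For the additional faces $N_j$, I would take~\eqref{eq_new_face_2} and~\eqref{eq_new_face_1} at $k=0$: $N_1=\cl{\D_0^{(\set{i_0})}}\times\set{0}^{(\set{i_1})}\times\cl{\cubex_{n-1}^{(I_n\smin I_1)}}$, and since $\cl{\D_0^{(\set{i_0})}}$ is a point that becomes the base vertex of the cube, it is absorbed into the $\set{0}^{(\set{i_1})}\times\cl{\cubex_{n-1}}$ part, yielding~\eqref{eq_new_face_1_0}; the $N_j$ for $j\ge2$ carry over verbatim with $\cl{\D_1^{(I_1)}}$ replaced by $\cl{\cube_1^{(I_1^\prime)}}$, i.e.\ $\cl{\cube_{j-1}^{(I^\prime_{j-1})}}$ after merging with the $\cl{\cube_{j-k-2}}$ factor.

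Next I would obtain the transformed operator~\eqref{eq_L_cube_0}. Setting $k=0$ in~\eqref{eq_L_cube}, the first (simplex) sum $\half\sum_{j,l=1}^{k+1}\tilde{p}^{i_j}(\delta^j_l-\tilde{p}^{i_l})\partial^2/\partial\tilde{p}^{i_j}\partial\tilde{p}^{i_l}$ reduces to the single term $\half\,\tilde{p}^{i_1}(1-\tilde{p}^{i_1})\,\partial^2/(\partial\tilde{p}^{i_1})^2$, which is precisely $L_1^*$ on $\cl{\D_1^{(\set{0,i_1})}}$; applying the final blow-up $\Phi^{i_1}_{i_0}$ to this one-dimensional operator produces, by the $\tilde{a}^{\ri\ri}$ formula of Lemma~\ref{lem_blowup_trans_op} (with the roles adjusted since here the ``$\si$''-coordinate is $i_1$ and the ``$\ri$''-coordinate is $i_0=0$, which is eliminated), the term $\half\,\tilde{p}^{i_1}(1-\tilde{p}^{i_1})\,\partial^2/(\partial\tilde{p}^{i_1})^2$ — i.e.\ the $j=1$ summand of~\eqref{eq_L_cube_0} with the empty product $\prod_{l=1}^{0}\tilde{p}^{i_l}=1$ in the denominator. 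The second sum in~\eqref{eq_L_cube} already has the right shape: at $k=0$ it is $\half\sum_{j=2}^{n}\frac{\tilde{p}^{i_j}(1-\tilde{p}^{i_j})}{\prod_{l=1}^{j-1}\tilde{p}^{i_l}}\,\partial^2/(\partial\tilde{p}^{i_j})^2$, which together with the $j=1$ term is exactly~\eqref{eq_L_cube_0}.

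Finally, for the solution formula~\eqref{eq_blowup_sol_sim_0}, I would specialise~\eqref{eq_blowup_sol_sim} to $k=0$. The component $u_{I_k}=u_{\set{i_0}}$ lives on $\D_0^{(\set{i_0})}$, a single point, so it is a constant, and since $p^0=1$ there (the state where only allele $i_0$ is present), it equals $u_{\set{i_0}}(1)$. The base extension $\bar{u}^{i_0,i_1}_{I_k}$ on $\cl{\D_1^{(\set{0,i_1})}}$ is, by~\eqref{eq_iter_ext} with $d=1$, just $u_{\set{i_0}}(\pi^{i_0,i_1}(p))\cdot\frac{p^{i_0}}{p^{i_0}+p^{i_1}}=u_{\set{i_0}}(1)\,(1-p^{i_1})$ (the projection lands on the vertex $i_0$, so the argument is always $1$), and after the last blow-up this $(1-p^{i_1})$ becomes the $(1-\tilde{p}^{i_1})$ factor; together with the product $\prod_{j=2}^{n}(1-\tilde{p}^{i_j})$ from $\tilde{u}_{I_k}^{i_0,i_1;i_2,\dots,i_n}$ in Proposition~\ref{prop_blowup_solution} this gives $u_{\set{i_0}}(1)\prod_{j=1}^{n}(1-\tilde{p}^{i_j})$, and by~\eqref{eq_blowup_sol_sim} this single expression is valid on all of $\cl{\cube_n^{(I_n^\prime)}}$. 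I expect the main obstacle to be the bookkeeping of that \emph{extra}, $(n-k-1+1)$-st blow-up step that is only used in the stationary $k=0$ case — justifying that $\Phi^{i_1}_{i_0}$ legitimately acts on the leftover $\cl{\D_1^{(\set{0,i_1})}}$ factor despite $L^*$ now becoming degenerate on the resulting $\D_0$, i.e.\ that the exception set $N_1$ acquires the claimed form and that the operator formula remains valid off $N_1$; this relies on $L^*_0\equiv0$ and on the restrictability arguments referenced in the footnote and in Lemma~\ref{lem_stem_cube}, and is exactly the point where the stationary hypothesis is essential (a time-dependent solution would not extend across $N_1$).
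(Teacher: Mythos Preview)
Your computation of the solution formula and the operator is essentially fine, but the overall framing introduces a step that is both unnecessary and ill-defined. You posit an ``extra'' blow-up $\Phi^{i_1}_{i_0}$ with $\ri_{n-1}=i_0=0$ to convert the remaining $1$-simplex factor $\cl{\D_1^{(\{0,i_1\})}}$ into a cube direction. But Lemma~\ref{lem_blowup_trans} explicitly requires $\si,\ri\in I_d\smin\{0\}$, so $\ri=0$ is not allowed; and if you wrote out the formulae anyway you would get $\tilde p^{\si}=p^{i_1}+p^{0}\equiv1$, i.e.\ the map would collapse the domain rather than parametrise it. The iterated blow-up of Proposition~\ref{prop_blowup_trans_total} for $k=0$ consists of exactly $n-1$ steps, the last being $\Phi^{i_1}_{i_2}$, not $\Phi^{i_1}_{i_0}$; there is no $(n{-}k)$-th step.

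The paper's argument is simpler and avoids this entirely: once you set $k=0$ in Propositions~\ref{prop_blowup_trans_total} and~\ref{prop_blowup_solution}, the remaining simplex factor is $\cl{\D_1^{(\{0,i_1\})}}=\{p^{i_1}\in[0,1]\}$, which is \emph{literally the same set} as $\cl{\cube_1^{(\{i_1\})}}$. No further transformation is needed. With this identification, \eqref{eq_diffeom_dimwise_0}, \eqref{eq_diffeom_eventually_0} and the $N_j$ formulae are immediate specialisations of \eqref{eq_diffeom_dimwise}--\eqref{eq_new_face_1}; the operator \eqref{eq_L_cube_0} is just \eqref{eq_L_cube} with the single $j=l=1$ term of the first sum recognised as the $j=1$ summand of the second (empty product in the denominator); and the solution formula follows from the one-line computation
\[
\bar u_{\{i_0\}}^{\,i_0,i_1}(\tilde p)=u_{\{i_0\}}(\tilde p^{i_0}+\tilde p^{i_1})\,\frac{\tilde p^{i_0}}{\tilde p^{i_0}+\tilde p^{i_1}}=u_{\{i_0\}}(1)\,(1-\tilde p^{i_1}),
\]
which you in fact carried out correctly. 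The elaborate justification you give for why the ``extra'' step is legitimate in the stationary case (invoking $L_0^*\equiv0$, the footnote to~\eqref{eq_L_cube}, and Lemma~\ref{lem_stem_cube}) is thus solving a non-problem; drop that step and your proof goes through.
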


\begin{proof}[Proof of propositions~\ref{prop_blowup_trans_total} and \ref{prop_blowup_solution}]

%Let $\set{i_0,\dotsc,i_n}$ be an ordering of $I_n$ with $i_n=k$ and let $\bar{u}_n^{i_0,\dotsc,i_{n-1},i_n}$ be a solution of type~\eqref{eq_iter_ext} as given in proposition~\ref{prop_extension_iterated}.

We prove the assertions of both propositions in parallel: Aiming to transform $\bar{U}_{I_k}^{i_k,\dotsc,i_{n}}$ into a function that does not feature any incompatibilities and hence is of sufficient regularity with respect to the entire closure of the (transformed) domain, we show that the full blow-up via a repeated application of the coordinate transformation $\Phi^\si_\ri$ of lemma~\ref{lem_blowup_trans} with the indices $\si$ and $\ri$ to be picked as shown in each step yields the desired result for $\tilde{U}_{I_k}^{i_k,i_{k+1};i_{k+2},\dotsc,i_{n}}$, 
%whereas the domain and the operator transform as stated in proposition~\ref{prop_blowup_trans_total}. 
whereas the transformation behaviour of the domain and the operator %and the regularity of generic functions 
is as stated in proposition~\ref{prop_blowup_trans_total}. 
Note that in the designation of any domains, we will usually suppress the $t$-component throughout this proof, \eg write $\D_n^{(I_n)}$ instead of $\big(\D_n^{(I_n)}\big)_{-\infty}$, for notational simplicity.

Starting with the top-dimen\-sional component of $\bar{U}_{I_k}^{i_k,\dotsc,i_{n}}$, which is
\begin{align}\notag
 \bar{u}_{I_k}^{i_k,\dotsc,i_{n}}(p,t)
&=\bar{u}_{I_k}^{i_k,\dotsc,i_{n-1}}(\pi^{i_{n-1},i_n}(p),t)\cdot \frac{p^{i_{n-1}}}{p^{i_{n-1}}+ p^{i_n}}\\
&=u_{I_k}(\pi^{i_k,\dotsc,i_{n-1}}(\pi^{i_{n-1},i_n}(p)),t)\prod_{j=k}^{n-2} \frac{p^{i_j}}{\sum_{l=j}^n p^{i_l}}\cdot \frac{p^{i_{n-1}}}{p^{i_{n-1}}+ p^{i_n}}\quad\text{in $\D_n^{(I_n)}$}
\end{align}
with $p^{i_0}\equiv p^0=1-\sum_{j=1}^np^{i_j}$ (if $i_0\neq 0$, one may change the coordinates, \ie permute the vertices correspondingly),
we initially put%
\footnote{%
Alternatively, one could also put $\si_1\ce i_{n}$ and $\ri_1\ce i_{n-1}$, which would correspond to inverting the orientation of the $\tilde{p}^{\ri_1}$-coordinate in accordance with remark~\ref{rmk_blowup_trans} (\cf also below) plus subsequently swapping the coordinate indices $i_n$ and $i_{n-1}$, thus $\tilde{p}^{i_{n}}$ would get replaced with $1-\tilde{p}^{i_{n-1}}$ and $\tilde{p}^{i_{n-1}}$ with~$\tilde{p}^{i_{n}}$.}\label{footnote_swap_ind} %Also in the further course, the index $i_{n-1}$ would play the role of $i_{n-1}$.
 $\si_1\ce i_{n-1}$ and $\ri_1\ce i_{n}$. 
Changing coordinates $(p^i)\mapsto(\tilde{p}^i)$ by $\Phi^{\si_1}_{\ri_1}$ maps $\D_n^{(I_n)}$ onto $\D_{n-1}^{(I_{n-1})}\times{\cube_{1}^{(\set{i_{n}})}}$ and  $\cl{\D_{n-1}^{(I_{n-1})}}$ onto $\cl{\D_{n-1}^{(I_{n-1})}}\times\set{0}^{(\set{i_{n}})}$, whereas the entire domain 
$\cl{\D_n^{(I_n)}}$ is transformed into $\Big(\cl{\D_{n-1}^{(I_{n-1})}}\times\cl{\cube_{1}^{(\set{i_{n}})}}\Big)\smin N_{n-1}$ with %the conflictual boundary part $\cl{\D_{n-2}^{(I_{n-2})}}$ being inflated to the additional $(n-1)$-dimen\-sional face 
\begin{align}
& N_{n-1}\ce\cl{\D_{n-2}^{(I_{n-2})}}\times\set{0}^{(\set{i_{n-1}})}\times\cl{\cubex_1^{(\set{i_{n}})}}
\end{align}
being an additional $(n-1)$-dimen\-sional face of $\cl{\D_{n-1}^{(I_{n-1})}}\times\cl{\cube_{1}^{(\set{i_{n}})}}$ (\cf lemma~\ref{lem_blowup_trans}).
Simultaneously, the $(n-2)$-dimen\-sional incompatibility at $\cl{\D_{n-2}^{(I_{n-2})}}$ of the continuous extension of $\bar{u}_{I_k}^{i_k,\dotsc,i_{n}}$ to $\bd_{n-1}{\D_n^{(I_{n})}}$ is removed as the transformation yields
\begin{multline}
\begin{aligned}
\tilde{u}_{I_k}^{i_k,\dotsc,i_{n-1};i_{n}}(\tilde{p},t)
&\ce\bar{u}_{I_k}^{i_k,\dotsc,i_{n-1}}(\tilde{\pi}^{i_{n-1}}(\tilde{p}),t)\cdot(1-\tilde{p}^{i_{n}})\\
&=u_{I_k}(\pi^{i_k,\dotsc,i_{n-1}}(\tilde{\pi}^{i_{n-1}}(\tilde{p})),t)\prod_{j=k}^{n-2} \frac{\tilde{p}^{i_j}}{\sum_{l=j}^n \tilde{p}^{i_l}}
\cdot(1-\tilde{p}^{i_{n}})
\end{aligned}\\
\text{in $\D_{n-1}^{(I_{n-1})}\times{\cube_{1}^{(\set{i_{n}})}}$}
\end{multline}
%with $\tilde{\pi}^{i_{n-1}}(\tilde{p}^{i_j})\ce\tilde{p}^{i_j}$ for $i_j\neq i_n$ and $\tilde{\pi}^{i_{n-1}}(\tilde{p}^{i_n})\ce0$.
%Consequently
%\begin{align}
by equation~\eqref{eq_blowup_inverse} et seq.\ (note $\tilde{\pi}^{i_{n-1}}(\tilde{p})=\pi^{i_{n-1},i_n}(p)$).
%\end{align}
Hence, the complete function $\bar{U}_{I_k}^{i_k,\dotsc,i_{n}}$ is transformed into
\begin{multline}
\tilde{U}_{I_k}^{i_k,\dotsc,i_{n-1};i_{n}}(p,t)
%&\ce u_{I_k}(p,t)\ind_{\D_k^{(I_k)}}(p)\\
\ce\sum_{k\leq d\leq n-1}\bar{u}_{I_k}^{i_k,\dotsc,i_{d}}(p,t)\ind_{\D_d^{(I_d)}}(p)\\
+\tilde{u}_{I_k}^{i_k,\dotsc,i_{n-1};i_{n}}(p,t)\ind_{{\D_{n-1}^{(I_{n-1})}}\times\cube^{(I_n\smin I_{n-1})}_{1}}(p)
%&=u_{I_k}(p,t)\ind_{\D_k^{(I_k)}}(p)\\
\end{multline}
with the transformed top-dimen\-sional component $\tilde{u}_{I_k}^{i_k,\dotsc,i_{n-1};i_{n}}(\tilde{p},t)$ smoothly extending to $\D_{n-1}^{(I_{n-1})}\times\cl{\cube_{1}^{(\set{i_{n}})}}$ with
\begin{align} 
 \tilde{u}_{I_k}^{i_k,\dotsc,i_{n-1};i_{n}}(\tilde{p},t)\big|_{\D_{n-1}^{(I_{n-1})}\times{\set{0}^{(\set{i_{n}})}}}\equiv\bar{u}_{I_k}^{i_k,\dotsc,i_{n-1}}(\tilde{p},t)\quad\text{in $\D_{n-1}^{(I_{n-1})}\times{\set{0}^{(\set{i_{n}})}}$}. 
\end{align}
As $\bar{u}_{I_k}^{i_k,\dotsc,i_{n-1}}$ itself smoothly extends to $\bd_{n-2}\D_{n-1}^{(I_{n-1})}$, thus $\tilde{u}_{I_k}^{i_k,\dotsc,i_{n-1};i_{n}}$ now smoothly extends to the entire  
$(\bd_{n-2}\D_{n-1}^{(I_{n-1})})\times\cl{\cube_{1}^{(\set{i_{n}})}}$, in particular to $\D_{n-2}^{(I_{n-2})}\times\cl{\cube_{1}^{(\set{i_{n}})}}\subset \cl{N}_{n-1}$ (however, $\bar{u}_{I_k}^{i_k,\dotsc,i_{n-1}}$ \resp its continuous extension to $\bd_{n-2}\D_{n-1}^{(I_{n-1})}$ still has an incompatibility at $\cl{\D_{n-3}^{(I_{n-3})}}$). 

% For generic functions~$U$ and their (preliminary) transformation image $\tilde{U}$, it is noted that by lemma~\ref{lem_blowup_trans} $\D_n^{(I_n)}\cup\bd_{n-1}\D_n^{(I_n)} \longmapsto(\D_{n-1}^{(I_{n-1})}\times{\cube_{1}^{(\set{i_{n}})}})\cup\bd_{n-1}(\D_{n-1}^{(I_{n-1})}\times{\cube_{1}^{(\set{i_{n}})}})$ is mapped diffeomorphically (\cf equation~\eqref{eq_diffeom_dimwise_0}); thus, since by assumption $U|_{\D_n^{(I_n)}}$ smoothly connects with $U|_{\D_{n-1}^{(I_{n-1})}}$, so do $\tilde{U}|_{\D_{n-1}^{(I_{n-1})}\times{\cube_{1}^{(\set{i_{n}})}}}$ and $\tilde{U}|_{\D_{n-1}^{(I_{n-1})}}$.

The operator $L^*=\half\sum_{i,j=1}^n p^i(\delta^i_j-p^j)\dd{}{p^i}\dd{}{p^j}$ in $\cl{\D_n^{(I_n)}}$ transforms into (\cf lemma~\ref{lem_blowup_trans_op})
\begin{align}
\tilde{L}^*=\half\sum_{j,l\neq {n}} \tilde{p}^{i_j}(\delta^j_l-\tilde{p}^{i_l})\dd{}{\tilde{p}^{i_j}}\dd{}{\tilde{p}^{i_l}}
+\half\frac{\tilde{p}^{i_{n}}(1-\tilde{p}^{i_{n}})}{\tilde{p}^{i_{n-1}}}\dd{}{\tilde{p}^{i_{n}}}\dd{}{\tilde{p}^{i_{n}}}
\end{align}
on $\Big({\cl{\D_{n-1}^{(I_{n-1})}}}\times\cl{\cube_{1}^{(\set{i_{n}})}}\Big)\smin N_{n-1}$ since we have $\tilde{a}^{kl}(\tilde{p})=p^k(\delta^k_l-p^l)=\tilde{p}^k(\delta^k_l-\tilde{p}^l)$ for $k,l\neq i_{n-1},i_{n}$. 
%with $\tilde{L}_n^*\big|_{\cl{\D_{n-1}^{(I_{n-1})}}\times{\set{0}^{(\set{i_{n}})}}}=L^*_{n-1}$.
If $\tilde{p}^{i_{n}}$ is chosen with alternative orientation (\cf remark~\ref{rmk_blowup_trans}), then $\tilde{p}^{i_{n}}$ needs to be replaced by $(1-\tilde{p}^{i_{n}})$ everywhere.

\bigskip

As already indicated, the transformed solution is still not smoothly extendable to the full boundary of the transformed domain: Its $(n-2)$-dimen\-sional incompatibility is resolved, but its lower-dimen\-sional incompatibilities persist. Thus, the highest-dimen\-sional incompatibility now is of dimension $n-3$%(by inflation again of dimension $n-2$)
, and hence the situation is ready for another application of the blow-up transformation\index{blow-up transformation!iterated}, yielding a corresponding situation afterwards. %with all remaining incompatibilities lifted by one dimension.

Thus, an iterative advancement is necessary to resolve all incompatibilities. For this purpose, we assume that after the $\m$\ord step ($\m=1,\dotsc,n-k-2$) an already transformed function $\tilde{U}_{I_k}^{i_k,\dotsc,i_{n-\m};i_{n-\m+1},\dotsc,i_n}$ with (note that we again associate coordinates~$p$ resp.\ $\tilde{p}$
%,functions $\bar{U},\bar{u}$ resp.\ $\tilde{U},\tilde{u}$ 
etc.\ to the domain before/after the $(\m+1)$\ord transition; furthermore, we will use the convention $\bar{u}^{i_k}_{I_k}\equiv u_{I_k} $ to simplify the notation)
%
% \begin{align}
% \bar{U}_{I_k}^{i_k,\dotsc,i_{n}}(p,t)=\Big(1-\sum_{j=1}^{n-\m} {p}^{i_j}\Big)\cdot\frac{p^{i_1}}{\sum_{j=1}^{n-\m+1} p^{i_j}}\cdot\dotso
% \cdot\frac{p^{i_{n-\m-1}}}{p^{i_{n-\m-1}}+p^{i_{n-\m}}}
% \cdot(1-p^{i_{n-\m+1}})\dotsm(1-p^{i_{n}})
% \end{align}
%
\begin{multline}
\tilde{U}_{I_k}^{i_k,\dotsc,i_{n-\m};i_{n-\m+1},\dotsc,i_n}(p,t)
%&\ce u_{I_k}(p,t)\ind_{\D_k^{(I_k)}}(p)\\
=\sum_{k\leq d\leq n-\m}\bar{u}_{I_k}^{i_k,\dotsc,i_{d}}(p,t)\ind_{\D_d^{(I_d)}}(p)\\
+\sum_{n-\m+1\leq d\leq n}\tilde{u}_{I_k}^{i_k,\dotsc,i_{n-\m};i_{n-\m+1},\dotsc,i_d}(p,t)\ind_{{\D_{n-\m}^{(I_{n-\m})}}\times\cube^{(I_d\smin I_{n-\m})}_{d-n+\m}}(p)%\\
%&=u_{I_k}(p,t)\ind_{\D_k^{(I_k)}}(p)\\
\end{multline}%
% %
% {\color{gray}%
% \begin{align}
% &=\sum_{ k\leq d\leq n-\m} u_{I_k}(\pi^{i_k,\dotsc,i_d}(p),t)\prod_{j=k}^{d-1} \frac{p^{i_j}}{\sum_{l=j}^d p^{i_l}} \ind_{\D_d^{(I_d)}}(p)\\&
% +\sum_{n-\m+1\leq d\leq n}\bar{u}_{I_k}^{i_k,\dotsc,i_{n-\m}}(\tilde{\pi}^{i_{n-\m}}(p),t)\prod_{j=n-\m+1}^{d} (1-p^{i_j}) \ind_{{\D_{n-\m}^{(I_{n-\m})}}\times\cube^{(I_d\smin I_{n-\m})}_{d-n+\m}}(p)
% \end{align}
% }%
with
\begin{align}
\tilde{u}_{I_k}^{i_k,\dotsc,i_{n-\m};i_{n-\m+1},\dotsc,i_d}(p,t)=\bar{u}_{I_k}^{i_k,\dotsc,i_{n-\m}}(\tilde{\pi}^{i_{n-\m}}(p),t)\prod_{j=n-\m+1}^{d} (1-p^{i_j})%\ind_{{\D_{n-\m}^{(I_{n-\m})}}\times\cube^{(I_d\smin I_{n-\m})}_{\m}}(p)
\end{align}
for $d=n-\m+1,\dotsc,n$ and
\begin{align}\notag
\bar{u}_{I_k}^{i_k,\dotsc,i_{n-\m}}(p,t)
&=\bar{u}_{I_k}^{i_k,\dotsc,i_{n-\m-1}}(\pi^{i_{n-\m-1},i_{n-\m}}(p),t)\cdot \frac{p^{i_{n-\m-1}}}{p^{i_{n-\m-1}}+ p^{i_{n-\m}}}\\
&=u_{I_k}(\pi^{i_k,\dotsc,i_{n-\m-1}}(\pi^{i_{n-\m-1},i_{n-\m}}(p)),t)\hspace*{-1pt}\prod_{j=k}^{n-\m-2} \frac{p^{i_j}}{\sum_{l=j}^n p^{i_l}}\cdot \frac{p^{i_{n-\m-1}}}{p^{i_{n-\m-1}}+ p^{i_{n-\m}}}%\quad\text{in $\D_{n-\m}^{(I_{n-\m})}$}.
\end{align}
in $\D_{n-\m}^{(I_{n-\m})}$. The corresponding total domain as an image of $\cl{\D_n^{(I_n)}}$ is given by 
\begin{align}\label{eq_blowup_dom_prev}
\big(\cl{\D_{n-\m}^{(I_{n-\m})}}\times\cl{\cube_{\m}^{(I_{n}\smin I_{n-\m})}}\big)\smin\bigcup_{j=n-\m}^{n-1}N_{j}
\end{align}
with previously additional $(n-1)$-dimen\-sional faces %$N_{n-k+1},\dotsc,N_{n-1}$ with
\begin{gather}
 N_{n-\m}=\cl{\D_{n-\m-1}^{(I_{n-\m-1})}}\times\set{0}^{(\set{i_{n-\m}})}\times\cl{\cubex_{\m}^{(I_{n}\smin I_{n-\m})}}\\
 \intertext{and}
 N_{j}=\cl{\D_{n-\m}^{(I_{n-\m})}}\times\cl{\cube_{j-n+\m-1}^{(I_{j-1}\smin I_{n-\m})}}\times\set{0}^{(\set{i_{j}})}
\times\cl{\cubex_{n-j}^{(I_{n}\smin I_{j})}}
\end{gather}
for $j=n-\m+1,\dotsc,n-1$. %and
% \begin{align}
% & N_{n-1}=\cl{\D_{n-\m}^{(I_{n-\m})}}\times\cl{\cube_{\m-2}^{(I_{n-2}\smin I_{n-\m})}}\times\set{0}^{(\set{i_{n-1}})}
% \times{\hcube_{1}^{(\set{i_n})}}.
% \end{align}
%For the additional faces, a somewhat sloppy notation is used since we assume them not to overlap: Whenever two of the additional faces share parts of their boundaries, than this intersection is assumed to only belong to the newer, \ie the more recently produced one of the two faces; correspondingly, the other face is assumed to comprise the indicated set reduced by the shared boundary part (\cf also corollary~\ref{cor_blowup_trans}).

The functions $\tilde{u}_{I_k}^{i_k,\dotsc,i_{n-\m};i_{n-\m+1},\dotsc,i_d}$ smoothly extend each to ${\D_{n-\m}^{(I_{n-\m})}}\times\cl{\cube_{d-n+\m}^{(I_d\smin I_{n-\m})}}$,  
and we have
\begin{align}
\tilde{u}_{I_k}^{i_k,\dotsc,i_{n-\m};i_{n-\m+1},\dotsc,i_d}|_{{{\D_{n-\m}^{(I_{n-\m})}}\times\cube^{(I_{d-1}\smin I_{n-\m})}_{\m}}}=
\tilde{u}_{I_k}^{i_k,\dotsc,i_{n-\m};i_{n-\m+1},\dotsc,i_{d-1}}%\quad\text{for $d\geq n-\m$.}
\end{align}
for $d=n-\m+2,\dotsc,n$ and 
\begin{align}
\tilde{u}_{I_k}^{i_k,\dotsc,i_{n-\m};i_{n-\m+1}}|_{{{\D_{n-\m}^{(I_{n-\m})}}}}=
\bar{u}_{I_k}^{i_k,\dotsc,i_{n-\m}}.%\quad\text{for $d\geq n-\m$.}
\end{align}
With $\bar{u}_{I_k}^{i_k,\dotsc,i_{n-\m}}$ being smoothly extendable to $\bd_{n-\m-1}\D_{n-\m}^{(I_{n-\m})}$, also the functions $\tilde{u}_{I_k}^{i_k,\dotsc,i_{n-\m};i_{n-\m+1},\dotsc,i_d}$ smoothly extend to  $\Big(\bd_{n-\m-1}{\D_{n-\m}^{(I_{n-\m})}}\Big)\times\cl{\cube_{d-n+\m}^{(I_d\smin I_{n-\m})}}$, in particular all additional faces\index{cube!additional face} are covered. 

Furthermore, we assume the operator $L^*$ to be of the corresponding form
\begin{align}\label{eq_L_previous}
{L}^*=\half\sum_{j,l=1}^{n-\m}{p}^{i_j}(\delta^j_l-{p}^{i_l})\dd{}{{p}^{i_j}}\dd{}{{p}^{i_l}}
+\half\sum_{j=n-\m+1}^{n} \frac{p^{i_j}(1-p^{i_j})}{\prod_{l=n-\m}^{j-1} p^{i_l}}\ddsq{}{p^{i_j}}
\end{align}
on $\Big(\cl{\D_{n-\m}^{(I_{n-\m})}}\times\cl{\cube_{\m}^{(I_{n}\smin I_{n-\m})}}\Big)\smin\bigcup_{j=n-\m}^{n-1}N_{j}$.% with the product equalling 1 for $n-\m+1>j-1$.

%(however, $\bar{u}_{I_k}^{i_k,\dotsc,i_{n-\m}}$ \resp its continuous extension to $\bd_{n-\m-1}\D_{n-\m}^{(I_{n-\m})}$ still has an incompatibility at $\cl{\D_{n-\m-2}^{(I_{n-\m-2})}}$).

For the $(\m+1)$\ord blow-up step going to be applied now, we first notice that $\bar{u}_{I_k}^{i_k,\dotsc,i_{n-\m}}$ \resp its continuous extension to $\bd_{n-\m-1}\D_{n-\m}^{(I_{n-\m})}$ still has an incompatibility at $\cl{\D_{n-\m-2}^{(I_{n-\m-2})}}\subset\D_{n-\m}^{(I_{n-\m})}$, corresponding to $p^{i_{n-\m}}+p^{i_{n-\m-1}}=0$. 
%Thus, when ignoring the factors $(1-p^{i_{n-\m+1}})\dotsm(1-p^{i_{n}})$, $u$ can be interpreted as a function on ${\D_{n-\m}^{(I_{n-\m})}}$ with the incompatibility occurring on the $(n-\m-2)$-dimen\-sional boundary part $\cl{\D_{n-\m-2}^{(I_{n-\m-2})}}$. 
Consequently, this may be resolved by a blow-up transformation $\Phi^{\si_{\m+1}}_{\ri_{\m+1}}$ with $\si_{\m+1}=i_{n-\m-1}$ and $\ri_{\m+1}=i_{n-\m}$ (note that, due to the stipulation $i_0=0$, we always have $\si_{\m+1},\ri_{\m+1}\neq 0$), mapping the simplex part of the domain (\cf lemma~\ref{lem_blowup_trans})
\begin{align}\label{eq_blowup_inner_dom}
{\D_{n-\m}^{(I_{n-\m})}}
\longmapsto{\D_{n-\m-1}^{(I_{n-\m-1})}}\times{\cube_1^{(\set{i_{n-\m}})}}
\end{align}
\resp
\begin{align}
\cl{\D_{n-\m-1}^{(I_{n-\m-1})}}
\longmapsto\cl{\D_{n-\m-1}^{(I_{n-\m-1})}}\times{\set{0}^{(\set{i_{n-\m}})}}
\end{align}
and altogether
\begin{align}\label{eq_blowup_outer_dom}
\cl{\D_{n-\m}^{(I_{n-\m})}}\longmapsto\cl{\D_{n-\m-1}^{(I_{n-\m-1})}}\times\cl{\cube_1^{(\set{i_{n-\m}})}}\smin N_{n-\m-1}
\end{align}
with
% \begin{align}\notag
% \cl{\D_{n-\m}^{(I_{n-\m})}}\smin\cl{\D_{n-\m-2}^{(I_{n-\m-2})}}
% \longmapsto\Big(\cl{\D_{n-\m-1}^{(I_{n-\m-1})}}\smin\cl{\D_{n-\m-2}^{(I_{n-\m-2})}}\Big)\times\cl{\cube_1^{(\set{i_{n-\m}})}},
% \end{align}
%whereas the entire $\cl{\D_{n-\m}^{(I_{n-\m})}}$ is mapped onto $\cl{\D_{n-\m-1}^{(I_{n-\m-1})}}\times\cl{\cube_1^{(\set{i_{n-\m}})}}\smin N_{n-\m-1}$ with 
\begin{align}\label{eq_new_face_before}
N_{n-\m-1}\ce\cl{\D_{n-\m-2}^{(I_{n-\m-2})}}\times\set{0}^{(\set{i_{n-\m-1}})}\times \cl{\cubex_1^{(\set{i_{n-\m}})}}
\end{align}
being an additional $(n-\m-1)$-dimen\-sional face of $\cl{\D_{n-\m-1}^{(I_{n-\m-1})}}\times\cl{\cube_1^{(\set{i_{n-\m}})}}$. 

From this -- when gradually adding the cubic part ${\cube_{\m}^{(I_{n}\smin I_{n-\m})}}$ with coordinates $p^{i_{n-\m+1}},\dotsc,p^{i_{n}}$ -- equation~\eqref{eq_blowup_inner_dom} turns into
\begin{align}
{\D_{n-\m}^{(I_{n-\m})}}\times\cube_{d-n+\m}^{(I_{d}\smin I_{n-\m})}
\longmapsto{\D_{n-\m-1}^{(I_{n-\m-1})}}\times\cube_{d-n+\m+1}^{(I_{d}\smin I_{n-\m-1})}\quad\text{for $d\geq n-\m$},
\end{align}
and by applying equation~\eqref{eq_blowup_outer_dom} to the previous image of the initial domain $\cl{\D_n^{(I_n)}}$ in equation~\eqref{eq_blowup_dom_prev}, we obtain for the transformed total domain
\begin{align}\label{eq_blowup_dom_after}
\big(\cl{\D_{n-\m-1}^{(I_{n-\m-1})}}\times\cl{\cube_{\m+1}^{(I_{n}\smin I_{n-\m-1})}}\big)\smin\bigcup_{j=n-\m-1}^{n-1}\tilde{N}_{j}
\end{align}
with $\tilde{N}_{n-\m},\dotsc,\tilde{N}_{n-1}$ being the images of the previous\index{cube!additional face} additional faces: The faces $N_{n-\m+1},\dotsc,N_{n-1}$ are only affected indirectly as they contain the full $\cl{\D_{n-\m}^{(I_{n-\m})}}$ as a factor, and hence only the $i_{n-\m}$\ord coordinate is moved from the simplex to the cubic fraction, thus
\begin{align}
& \tilde{N}_{j}=\cl{\D_{n-\m-1}^{(I_{n-\m-1})}}\times\cl{\cube_{j-n+\m}^{(I_{j-1}\smin I_{n-\m-1})}}\times\set{0}^{(\set{i_{j}})}
\times\cl{\cubex_{n-j}^{(I_{n}\smin I_{j})}}%\quad\text{for $j=1,\dotsc,\m-1$}
\end{align}
\text{for $j=n-\m+1,\dotsc,n-1$},  %and
% \begin{align}
% & \tilde{N}_{n-1}=\cl{\D_{n-\m-1}^{(I_{n-\m-1})}}\times\cl{\cube_{\m-1}^{(I_{n-2}\smin I_{n-\m-1})}}\times\set{0}^{(\set{i_{n-1}})}
% \times{\hcube_{1}^{(\set{i_n})}},
% \end{align}
whereas $N_{n-\m}\equiv\tilde{N}_{n-\m}$ is virtually not affected as only ${p}^{i_{n-\m}}=0$ is transformed into $\tilde{p}^{i_{n-\m}}=0$.
For the `new' additional $(n-1)$-dimen\-sional face $\tilde{N}_{n-\m-1}$
(resulting from $N_{n-\m-1}$), we may -- having added the remaining dimensions -- relax the condition $\tilde{p}^{i_{n-\m}}>0$ in equation~\eqref{eq_new_face_before}, which ensures $N_{n-\m-1}\neq\cl{\D_{n-\m-2}^{(I_{n-\m-2})}}$, into $\sum_{j=n-\m}^n\tilde{p}^{i_{j}}>0$ and hence obtain
\begin{align}
& \tilde{N}_{n-\m-1}\ce\cl{\D_{n-\m-2}^{(I_{n-\m-2})}}\times\set{0}^{(\set{i_{n-\m-1}})}
\times\cl{\cubex_{\m+1}^{(I_{n}\smin I_{n-\m-1})}}.
\end{align}
% With $N_{n-\m}$ in contrast, the $i_{n-\m}$\ord coordinate is missing which is transformed into $\tilde{p}^{i_{n-\m}}$ equalling zero by corollary~\ref{cor_blowup_trans}, thus
% \begin{align}
% &\tilde{N}_{n-\m}=\cl{\D_{n-\m-1}^{(I_{n-\m-1})}}\times\set{0}^{(\set{i_{n-\m}})}\times (0,1]^{(\set{i_{n-\m+1}})}
% \times\cl{\cube_{\m-1}^{(I_{n}\smin I_{n-\m+1})}}.
% \end{align}
%All faces are actually to be interpreted as being non-overlapping.

Simultaneously, $\bar{u}_{I_k}^{i_k,\dotsc,i_{n-\m}}$ and $\tilde{u}_{I_k}^{i_k,\dotsc,i_{n-\m};i_{n-\m+1},\dotsc,i_d}$, $d=n-\m+1,\dotsc,n$ get transformed into 
\begin{align}
\tilde{u}_{I_k}^{i_k,\dotsc,i_{n-\m-1};i_{n-\m},\dotsc,i_d}(\tilde{p},t)=\bar{u}_{I_k}^{i_k,\dotsc,i_{n-\m-1}}(\tilde{\pi}^{i_{n-\m-1}}(\tilde{p}),t)\prod_{j=n-\m}^{d} (1-\tilde{p}^{i_j}) %\ind_{{\D_{n-\m-1}^{(I_{n-\m-1})}}\times\cube_{d-n+\m+1}^{(I_{d}\smin I_{n-\m-1})}}(\tilde{p})
\end{align}
in ${\D_{n-\m-1}^{(I_{n-\m-1})}}\times\cube_{d-n+\m+1}^{(I_{d}\smin I_{n-\m-1})}$ for $d\geq n-\m$,
and hence
\begin{multline}
\tilde{U}_{I_k}^{i_k,\dotsc,i_{n-\m-1};i_{n-\m},\dotsc,i_n}(p,t)\ce
%& u_{I_k}(p,t)\ind_{\D_k^{(I_k)}}(p)\\
\sum_{k\leq d\leq n-\m-1}\bar{u}_{I_k}^{i_k,\dotsc,i_{d}}(p,t)\ind_{\D_d^{(I_d)}}(p)\\
+\sum_{n-\m\leq d\leq n}\tilde{u}_{I_k}^{i_k,\dotsc,i_{n-\m-1};i_{n-\m},\dotsc,i_d}(p,t)\ind_{{\D_{n-\m-1}^{(I_{n-\m-1})}}\times\cube^{(I_d\smin I_{n-\m-1})}_{d-n+\m+1}}(p).
\end{multline}
% {\color{gray}
% \begin{multline}
% \tilde{U}_{I_k}^{i_k,\dotsc,i_{n-\m-1};i_{n-\m},\dotsc,i_n}(p,t)
% %&u_{I_k}(p,t)\ind_{\D_k^{(I_k)}}(p)\\
% =\sum_{ k\leq d\leq n-\m-1} u_{I_k}(\pi^{i_k,\dotsc,i_d}(p),t)\prod_{j=k}^{d-1} \frac{p^{i_j}}{\sum_{l=j}^d p^{i_l}} \ind_{\D_d^{(I_d)}}(p)\\
% +\sum_{n-\m\leq d\leq n}\bar{u}_{I_k}^{i_k,\dotsc,i_{n-\m-1}}(\tilde{\pi}^{i_{n-\m-1}}(p),t)\prod_{j=n-\m}^{d} (1-p^{i_j}) \ind_{{\D_{n-\m-1}^{(I_{n-\m-1})}}\times\cube^{(I_d\smin I_{n-\m-1})}_{d-n+\m+1}}(p)
% \end{multline}}
% % 
% \begin{align}\notag
% \tilde{U}_{I_k}^{i_k,\dotsc,i_{n}}(\tilde{p}) 
% &=\Big(1-\sum_{j=1}^{n-\m-1} \tilde{p}^{i_j}\Big)\cdot\frac{\tilde{p}^{i_1}}{\sum_{j=1}^{n-\m-1} \tilde{p}^{i_j}}\cdot\dotso
% \cdot\frac{\tilde{p}^{i_{n-\m-2}}}{\tilde{p}^{i_{n-\m-2}}+\tilde{p}^{i_{n-\m-1}}}
% \cdot(1-\tilde{p}^{i_{n-\m}})\dotsm(1-\tilde{p}^{i_{n}})
% \end{align}
%

The transformed functions $\tilde{u}_{I_k}^{i_k,\dotsc,i_{n-\m-1};i_{n-\m},\dotsc,i_d}$ then each smoothly extend to ${\D_{n-\m-1}^{(I_{n-\m-1})}}\times\cl{\cube_{d-n+\m+1}^{(I_d\smin I_{n-\m-1})}}$, 
and we have
\begin{align}\label{eq_restr_1}
\tilde{u}_{I_k}^{i_k,\dotsc,i_{n-\m-1};i_{n-\m},\dotsc,i_d}|_{{{\D_{n-\m-1}^{(I_{n-\m-1})}}\times\cube^{(I_{d-1}\smin I_{n-\m-1})}_{\m+1}}}=
\tilde{u}_{I_k}^{i_k,\dotsc,i_{n-\m-1};i_{n-\m},\dotsc,i_{d-1}}%\quad\text{for $d\geq n-\m-1$.}
\end{align}
for $d=n-\m+1,\dotsc,n$ and 
\begin{align}\label{eq_restr_2}
\tilde{u}_{I_k}^{i_k,\dotsc,i_{n-\m-1};i_{n-\m}}|_{{{\D_{n-\m-1}^{(I_{n-\m-1})}}}}=
\bar{u}_{I_k}^{i_k,\dotsc,i_{n-\m-1}}.%\quad\text{for $d\geq n-\m-1$.}
\end{align}
With $\bar{u}_{I_k}^{i_k,\dotsc,i_{n-\m-1}}$ being smoothly extendable to $\bd_{n-\m-2}\D_{n-\m-1}^{(I_{n-\m-1})}$, the functions $\tilde{u}_{I_k}^{i_k,\dotsc,i_{n-\m-1};i_{n-\m},\dotsc,i_d}$ also smoothly extend to $\Big(\bd_{n-\m-2}{\D_{n-\m-1}^{(I_{n-\m-1})}}\Big)\times\cl{\cube_{d-n+\m+1}^{(I_d\smin I_{n-\m-1})}}$, by which all additional faces\index{cube!additional face} are covered; in particular, $\tilde{u}_{I_k}^{i_k,\dotsc,i_{n-\m-1};i_{n-\m}}$ smoothly extends to $N_{n-\m-1}$ \resp eventually  $\tilde{u}_{I_k}^{i_k,\dotsc,i_{n-\m-1};i_{n-\m},\dotsc,i_n}$ extends to $\tilde{N}_{n-\m-1}$ 
(however, $\bar{u}_{I_k}^{i_k,\dotsc,i_{n-\m-1}}$ \resp its continuous extension to $\bd_{n-\m-2}\D_{n-\m-1}^{(I_{n-\m-1})}$ still has an incompatibility at $\cl{\D_{n-\m-3}^{(I_{n-\m-3})}}$).

% For generic (previously transformed) functions~$U$ and their (current) transformation image $\tilde{U}$, it is noted that by lemma~\ref{lem_blowup_trans} %${\D_{n-\m}^{(I_{n-\m})}}\times\cube_{d-n+\m}^{(I_{d}\smin I_{n-\m})}\longmapsto{\D_{n-\m-1}^{(I_{n-\m-1})}}\times\cube_{d-n+\m+1}^{(I_{d}\smin I_{n-\m-1})}$ for $d\geq n-\m$
% $\D_{n-\m}^{(I_{n-\m})}\cup\bd_{{n-\m}-1}\D_{n-\m}^{(I_{n-\m})} \longmapsto(\D_{{n-\m}-1}^{(I_{{n-\m}-1})}\times{\cube_{1}^{(\set{i_{{n-\m}}})}})\cup\bd_{{n-\m}-1}(\D_{{n-\m}-1}^{(I_{{n-\m}-1})}\times{\cube_{1}^{(\set{i_{{n-\m}}})}})$ 
% is mapped diffeomorphically (\cf equation~\eqref{eq_diffeom_dimwise_0}); thus, since by assumption $U|_{\D_{n-\m}^{(I_{n-\m})}}$ smoothly connects with $U|_{\D_{{n-\m}-1}^{(I_{{n-\m}-1})}}$, so does $\tilde{U}|_{\D_{{n-\m}-1}^{(I_{{n-\m}-1})}\times{\cube_{1}^{(\set{i_{{n-\m}}})}}}$ with $\tilde{U}|_{\D_{{n-\m}-1}^{(I_{{n-\m}-1})}}$.

To analyse the transformation behaviour of the operator, we first note that the requirements of lemma~\ref{lem_blowup_trans_op} on $a^{ij}$ are met as for $i,j\in\set{i_1,\dotsc,i_{n-\m}}$ we have $a^{ij}(p)=p^i(\delta^i_j-p^j)$ by equation~\eqref{eq_L_previous}, while all other non-diagonal coefficients vanish. Hence, by the lemma, we have for $i,j\in\set{i_1,\dotsc,i_{n-\m}}$
\begin{align}
\tilde{a}^{ij}(\tilde{p})=%a^{ij}(p)=p^i(\delta^i_j-p^{j})=
\tilde{p}^i(\delta^i_j-\tilde{p}^{j}),
\end{align}
while for $\tilde{a}^{i_ji_j}$ with $j=n-\m+1,\dotsc,n$ we obtain
\begin{align}
\tilde{a}^{i_ji_j}(\tilde{p})=a^{i_ji_j}(p)=\frac{p^{i_j}(1-p^{i_j})}{\prod_{l=n-\m}^{j-1} p^{i_l}}
=\frac{\tilde{p}^{i_j}(1-\tilde{p}^{i_j})}{\prod_{l=n-\m-1}^{j-1} \tilde{p}^{i_l}}.
\end{align}
Likewise, $\tilde{a}^{i_{n-\m}i_{n-\m}}$ takes the form
\begin{align}
\tilde{a}^{i_{n-\m}i_{n-\m}}(\tilde{p})=\frac{\tilde{p}^{i_{n-\m}}(1-\tilde{p}^{i_{n-\m}})}{\tilde{p}^{i_{n-\m-1}}},
\end{align}
whereas all other coefficients vanish. Altogether, this yields
\begin{align}
\tilde{L}^*=\half\sum_{j,l=1}^{n-\m-1} \tilde{p}^{i_j}(\delta^j_l-\tilde{p}^{i_l})\dd{}{\tilde{p}^{i_j}}\dd{}{\tilde{p}^{i_l}}
+\half\sum_{j=n-\m}^{n} \frac{\tilde{p}^{i_j}(1-\tilde{p}^{i_j})}{\prod_{l=n-\m-1}^{j-1} \tilde{p}^{i_l}}\ddsq{}{\tilde{p}^{i_j}}
\end{align}
on  $\Big(\cl{\D_{n-\m-1}^{(I_{n-\m-1})}}\times\cl{\cube_{\m+1}^{(I_{n}\smin I_{n-\m-1})}}\Big)\smin\bigcup_{j=n-\m-1}^{n-1}N_{j}$. If $\tilde{p}^{i_{n-\m}}$ is chosen with alternative orientation (\cf remark~\ref{rmk_blowup_trans}), then $\tilde{p}^{i_{n-\m}}$ needs to be replaced by $(1-\tilde{p}^{i_{n-\m}})$ everywhere.

Thus, after the $(\m+1)$\ord blow-up step, domain, solution and operator are of analogous form as before, just with the index~$\m$ replaced by $\m+1$. Eventually, after $n-k-1$ blow-up steps domain, solution and operator have attained the asserted form of the corresponding statements. In particular, the remaining $u_{I_k}$ as a proper solution smoothly extends to the entire boundary of $\D_k^{(I_k)}$, and hence so does $\bar{u}_{I_k}^{i_k,i_{k+1}}$ in $\D_{k+1}^{(I_k)}$, implying that each
$\tilde{u}_{I_k}^{i_k,i_{k+1};i_{k+2},\dotsc,i_{d}}$ smoothly extends to $\cl{\D_{k+1}^{(I_{k+1})}}\times\cl{\cube^{(I_d\smin I_{k+1})}_{d-k-1}}$, 
and eventually $\tilde{U}_{I_k}^{i_k,i_{k+1};i_{k+2},\dotsc,i_{n}}$  smoothly extends to $\cl{\D_{k+1}^{(I_{k+1})}}\times\cl{\cube_{n-k-1}^{(I_{n}\smin I_{k+1})}}$. Moreover, the restriction property in equations~\eqref{eq_restr_1} and~\eqref{eq_restr_2} yields equation~\eqref{eq_blowup_sol_sim}.
% \begin{align}
% \tilde{U}_{I_k}^{i_k,i_{k+1};i_{k+2},\dotsc,i_{n}}\equiv\tilde{u}_{I_k}^{i_k,i_{k+1};i_{k+2},\dotsc,i_{n}}\quad\text{in $\cl{\D_{k+1}^{(I_{k+1})}}\times\cl{\cube_{n-k-1}^{(I_{n}\smin I_{k+1})}}$}.
% \end{align}
\end{proof}

\begin{proof}[Proof of corollary~\ref{cor_blowup_solution}]
In the given setting, we have $\bar{u}_{\set{i_0}}^{i_0,i_{1}}(\tilde{p})\hspace*{-0.2pt}=\hspace*{-0.2pt}u_{\set{i_0}}(\tilde{p}^{i_0}+\tilde{p}^{i_1})\frac{\tilde{p}^{i_0}}{\tilde{p}^{i_0}+\tilde{p}^{i_1}}=u_{\set{i_0}}(1)(1-\tilde{p}^{i_1})$ in $\cl{\D_1^{(\set{i_0,i_1})}}=\cl{\cube_1^{(\set{i_1})}}$ (and $\cl{\D_0^{(\set{i_0})}}=\set{0}^{(\set{i_0})}$), which proves the asserted form of the (simplified) solution, the domain and the additional faces\index{cube!additional face}.
\end{proof}

However, the global smoothness of the transformed solution of proposition~\ref{prop_ext_iter} observed in the preceding corollary does not necessarily hold for other functions in question, \ie arbitrary iteratively extended solutions\index{backward extension!iterated} $U$ in accordance with the extension constraints\index{extension constraints (n-dim)@extension constraints ($n$-dim)}~\ref{dfi_ext} (this corresponds to $U$ particularly being of class $C_{p_0}^\infty$). %, \cf also remark~\ref{rmk_ext}). 
However, we still have a weaker global regularity assertion for the transformed function $\tilde{U}$ on the entire image of the simplex (only formulated for the stationary component corresponding to the setting of corollary~\ref{cor_blowup_solution}):

\begin{lem}\label{lem_blowup_reg} 
Let $n\geq 2$, $I_d\ce\set{i_0,i_{1},\dotsc,i_d}\subset \set{0,1,\dotsc,n}$ for $d=0,\dotsc,n$ with $i_i\neq i_j$ for $i\neq j$ and $u_{\set{i_0}}\co \D_0^{(\set{i_0})}\map \R$. % perceivable as a solution of the \KBE in $\D_0^{(\set{i_0})}$. 
Then an iterated extension\index{backward extension!iterated} ${U}=\sum_{d=0}^n {u}_d \in C_{p_0}^\infty\big(\bigcup_{d=0}^n \D_{d}^{(I_d)}\big)$ of $u_{\set{i_0}}$ in accordance with the extension constraints~\ref{dfi_ext} is transformed by a successive {blow-up transformation}\index{blow-up transformation!iterated} $\Phi^{\si_{n-1}}_{\ri_{n-1}}\circ\dotso\circ\Phi^{\si_1}_{\ri_1}$
%from $\cl{\D_n^{(I_n)}}$ to $\cl{\cube_{n\phantom{1}}^{(I^\prime_n)}}$ 
as in proposition~\ref{prop_blowup_trans_total} 
into a function 
$\tilde{U}=\sum_{d=0}^n \tilde{u}_d\co\bigcup_{d=0}^n \cube_{d}^{(I_d^\prime)}\map\R$ with extension to all faces $\bigset{\tilde{p}^{i_1}=1},\dotsc,%\bigset{\tilde{p}^{i_{n-1}}=1},
\bigset{\tilde{p}^{i_n}=1}$ (perceivable as boundary instance of any $\cube_{d}^{(I_d^\prime)}\subset\cl{\cube_n^{(I^{\prime}_n)}}$) which is of class $C_p^\infty$ and vanishes on the mentioned faces.
\end{lem}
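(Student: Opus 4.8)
The plan is to read the domain map off Corollary~\ref{cor_blowup_solution} (the case $k=0$ of Proposition~\ref{prop_blowup_trans_total}) and feed it the defining properties of $U$. Write $U=\sum_{d=0}^{n}u_{d}$ with $u_{d}\ce U|_{\D_{d}^{(I_{d})}}$. Since $U\in C_{p_0}^{\infty}\big(\bigcup_{d}\D_{d}^{(I_{d})}\big)$, equation~\eqref{eql_reg_ext} tells us precisely two things about each $u_{d}$: it extends $C^{\infty}$ in the spatial variables to $\D_{d}^{(I_{d})}\cup\bd_{d-1}\D_{d}^{(I_{d})}$, and its boundary trace equals $u_{d-1}$ on the facet $\D_{d-1}^{(I_{d}\smin\set{i_{d}})}=\D_{d-1}^{(I_{d-1})}$ and vanishes identically on every other facet $\D_{d-1}^{(I_{d}\smin\set{i_{j}})}$, $j=0,\dotsc,d-1$ (only this regularity of $U$ will be used, not the solution property of Definition~\ref{dfi_ext}). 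Put $\Phi\ce\Phi^{\si_{n-1}}_{\ri_{n-1}}\circ\dotsb\circ\Phi^{\si_{1}}_{\ri_{1}}$, so that $\tilde{U}\ce U\circ\Phi^{-1}=\sum_{d=0}^{n}\tilde{u}_{d}$, with $\tilde{u}_{d}\ce u_{d}\circ\Phi^{-1}$, is well defined on $\bigcup_{d=0}^{n}\cube_{d}^{(I_{d}^{\prime})}$ by Corollary~\ref{cor_blowup_solution}.

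First I would dispose of the pathwise regularity on the cube staircase. By Corollary~\ref{cor_blowup_solution}, $\Phi$ restricts for each $d$ to a $C^{\infty}$-diffeomorphism $\D_{d}^{(I_{d})}\to\cube_{d}^{(I_{d}^{\prime})}$, and these restrictions are compatible along the common faces $\D_{d-1}^{(I_{d-1})}\mapsto\cube_{d-1}^{(I_{d-1}^{\prime})}$; concretely, on a neighbourhood of the relative interior of $\D_{d-1}^{(I_{d-1})}$ inside $\D_{d}^{(I_{d})}\cup\D_{d-1}^{(I_{d-1})}$ all the successive factors $p^{\si_{\m}}+p^{\ri_{\m}}$ stay positive (the $\si_{\m},\ri_{\m}$ avoid the index $i_{d}$, and all coordinates but $p^{i_{d}}$ are positive there), so $\Phi$ is a genuine diffeomorphism onto a neighbourhood of $\cube_{d-1}^{(I_{d-1}^{\prime})}$ inside $\cube_{d}^{(I_{d}^{\prime})}\cup\cube_{d-1}^{(I_{d-1}^{\prime})}$. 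Transporting the trace identity $u_{d}|_{\D_{d-1}^{(I_{d-1})}}=u_{d-1}$ and the $C^{\infty}$-extendability of $u_{d}$ through these diffeomorphisms yields $\tilde{u}_{d}|_{\cube_{d-1}^{(I_{d-1}^{\prime})}}=\tilde{u}_{d-1}$ together with smoothness of $\tilde{u}_{d}$ across $\cube_{d-1}^{(I_{d-1}^{\prime})}$, hence $\tilde{U}\in C_{p}^{\infty}\big(\bigcup_{d=0}^{n}\cube_{d}^{(I_{d}^{\prime})}\big)$.

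The core step is to recognise each face $\set{\tilde{p}^{i_{j}}=1}$ as the $\Phi$-image of a simplex facet on which $U$ vanishes. Iterating the inverse relation $\tilde{p}^{\si_{\m}}=p^{\si_{\m}}+p^{\ri_{\m}}$ from \eqref{eq_blowup_inverse} gives $\tilde{p}^{i_{1}}=\sum_{l\ge 1}p^{i_{l}}=1-p^{i_{0}}$ on the preimage of $\cube_{n}^{(I_{n}^{\prime})}$, while Corollary~\ref{cor_blowup_trans}, applied to the unique step $\Phi^{i_{j-1}}_{i_{j}}$ converting the coordinate $i_{j}$ (for $j\ge 2$) and using that the later steps leave $\tilde{p}^{i_{j}}$ untouched, shows that, as subsets of $\cl{\D_{n}^{(I_{n})}}$,
\begin{align}
\set{\tilde{p}^{i_{j}}=1}=\set{p^{i_{j-1}}=0}\qquad(j=1,\dotsc,n,\ \text{with }i_{0}\ce 0),
\end{align}
which restricted to $\cl{\D_{d}^{(I_{d})}}$ reads $\set{\tilde{p}^{i_{j}}=1}\cap\cl{\cube_{d}^{(I_{d}^{\prime})}}=\Phi\big(\cl{\D_{d-1}^{(I_{d}\smin\set{i_{j-1}})}}\big)$ for $1\le j\le d$. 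Since $j\le d$ forces $i_{j-1}\ne i_{d}$, the facet $\D_{d-1}^{(I_{d}\smin\set{i_{j-1}})}$ is one of those on which the trace of $u_{d}$ is identically $0$; as $u_{d}$ is $C^{\infty}$ up to the relative interior of that facet and $\Phi$ is a diffeomorphism near it (again $p^{\si_{\m}}+p^{\ri_{\m}}>0$, all coordinates but $p^{i_{j-1}}$ being positive), $\tilde{u}_{d}$ extends $C_{p}^{\infty}$-smoothly to $\set{\tilde{p}^{i_{j}}=1}$ with value $0$. This holds for every $d$ and every $j\le d$, and the faces $\set{\tilde{p}^{i_{j}}=1}$ are shared boundary strata of the various $\cube_{d}^{(I_{d}^{\prime})}$, so the components $\tilde{u}_{d}$ assemble to the asserted extension of $\tilde{U}$, which is of class $C_{p}^{\infty}$ and vanishes on $\set{\tilde{p}^{i_{1}}=1},\dotsc,\set{\tilde{p}^{i_{n}}=1}$.

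The main obstacle I anticipate is the iteration bookkeeping in the last paragraph: tracking, through the $n-1$ blow-up steps, exactly which simplex facet becomes the new cube face $\set{\tilde{p}^{i_{j}}=1}$, and checking that at every stage the degeneracy loci $\set{p^{\si_{\m}}+p^{\ri_{\m}}=0}$ of the iterated transformation stay clear of neighbourhoods of the facets involved, so that both the vanishing and the pathwise smoothness genuinely transfer. A secondary point demanding care is that one can only claim the pathwise class $C_{p}^{\infty}$ on $\bigcup_{d}\cube_{d}^{(I_{d}^{\prime})}$ enlarged by the faces $\set{\tilde{p}^{i_{j}}=1}$, not global smoothness on $\cl{\cube}_{n}$: the complementary faces $\set{\tilde{p}^{i_{j}}=0}$ are precisely the blow-down loci where the incompatibilities of a general iteratively extended solution survive.
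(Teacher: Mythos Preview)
Your proposal is correct and follows essentially the same approach as the paper: transport the $C_{p_0}^\infty$-regularity through the diffeomorphic pieces of the blow-up, then identify each cube face $\set{\tilde{p}^{i_j}=1}$ with the simplex facet $\set{p^{i_{j-1}}=0}$ outside the extension path, where $U$ vanishes by definition of $C_{p_0}^\infty$. The only difference is that the paper packages the face correspondence $\set{\tilde{p}^{i_j}=1}\leftrightarrow\set{p^{i_{j-1}}=0}$ into a separate forward-referenced lemma (Lemma~\ref{lem_seq_N}), whereas you derive it in-line from Corollary~\ref{cor_blowup_trans} and the explicit inverse formulae; your explicit check that the degeneracy loci $\set{p^{\si_\m}+p^{\ri_\m}=0}$ avoid the relevant facets is a bit more detailed than the paper's appeal to ``the diffeomorphism properties of the transformation''.
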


For the proof, we trace the extendability of $\tilde{U}$ towards the additional faces\index{cube!additional face} back to that of~$U$ in $\cl{\D_n^{(I_n)}}$ for approaching the incompatibilities -- which is accomplished by the priorly following lemma. Note that in the following we will use a disjoint formulation of the additional faces by putting
%\sindex[not]{Nd@$N_d$}
\begin{align}
N_{j}={\cube_{j-1}^{(I_{j-1}^\prime)}}\times\set{0}^{(\set{i_{j}})}
\times\cl{\cubex_{n-j}^{(I_{n}^\prime\smin I_{j}^\prime)}}.
%\quad\text{for $j=1,\dotsc,k-2$}\\
\end{align}
% moreover, to have a similar notation for the non-new faces at hand, we also put:
% \sindex[not]{Mj@$M_j$}
% \begin{align}
% M_{j}={\cube_{j-1}^{(I_{j-1}^\prime)}}\times\set{1}^{(\set{i_{j}})}
% \times\cl{\cube_{n-j}^{(I_{n}^\prime\smin I_{j}^\prime)}}.
% %\quad\text{for $j=1,\dotsc,k-2$}\\
% \end{align}

\begin{lem}\label{lem_seq_N}
In the setting of a full {blow-up transformation}\index{blow-up transformation!iterated} as in proposition~\ref{prop_blowup_trans_total}, for $d=1,\dotsc,n$ the additional face $N_d ={\cube_{d-1}^{(I_{d-1}^\prime)}}\times\set{0}^{(\set{i_{d}})}
\times\cl{\cubex_{n-d}^{(I_{n}^\prime\smin I_{d}^\prime)}}\subset\overline{\cube_n^{(I_{n}^\prime)}}$ corresponds to $\D_{d-1}^{(I_{d-1})}\subset\overline{\D^{(I_{n})}_n}$ with additional values existing for $\frac{p^{i_{d+1}}+\dotsc+p^{i_{n}}}{p^{i_{d}}+p^{i_{d+1}}+\dotsc+p^{i_{n}}},\dotsc,\frac{p^{i_n}}{p^{i_{n-1}}+p^{i_n}}$ (perceivable as limits of corresponding sequences).
%(resp.\ $\frac{p^{i_{2}}_k+\dotsc+p^{i_{n}}_k}{p^{i_{1}}_k+p^{i_{2}}_k+\dotsc+p^{i_{n}}_k},\dotsc,\frac{p^{i_n}_k}{p^{i_{n-1}}_k+p^{i_n}_k}$ if $l=1$).% and \vv
Furthermore, for $j=1,\dotsc,d-1$ the face $\set{\tilde{p}^{i_j}=1}\subset\overline{\cube_{d-1}^{(I_{d-1}^\prime)}}$ corresponds to ${p}^{i_{j-1}}=0$ in $\overline{\D_{d-1}^{(I_{d-1})}}$, in particular its interior corresponds to $\D_{d-2}^{(I_{d-1}\smin{\set{i_{j-1}}})}.$
\end{lem}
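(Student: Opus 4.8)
The statement of Lemma~\ref{lem_seq_N} is purely geometric/combinatorial: it asserts which boundary stratum of the original simplex $\cl{\D_n^{(I_n)}}$ each additional face $N_d$ of the transformed cube $\cl{\cube_n^{(I_n^\prime)}}$ corresponds to under the iterated blow-up $\Phi^{\si_{n-1}}_{\ri_{n-1}}\circ\dotsb\circ\Phi^{\si_1}_{\ri_1}$, together with the identification of the faces $\{\tilde{p}^{i_j}=1\}$. Since everything is a composition of the explicit diffeomorphisms of Lemma~\ref{lem_blowup_trans} (with inverses given by~\eqref{eq_blowup_inverse}), the natural approach is to track the relevant coordinates through the composition step by step. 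First I would fix the conventions: in the setting of Corollary~\ref{cor_blowup_solution} we have $k=0$, $i_0=0$, $\si_\m=i_{n-\m}$, $\ri_\m=i_{n-\m+1}$, and the $\m$th step $\Phi^{\si_\m}_{\ri_\m}$ replaces the pair $(p^{i_{n-\m}},p^{i_{n-\m+1}})$ by $(p^{i_{n-\m}}+p^{i_{n-\m+1}},\ p^{i_{n-\m+1}}/(p^{i_{n-\m}}+p^{i_{n-\m+1}}))$, leaving all other coordinates unchanged. Composing these, I would show by induction on the number of steps that after all $n-1$ blow-ups the original coordinate $p^{i_d}$ (for $d\ge 1$) is recovered from the cube coordinates $\tilde p^{i_1},\dots,\tilde p^{i_n}$ by the telescoping product formula
\begin{align}\label{eq_coord_recovery}
p^{i_d}=\Bigl(\sum_{l=d}^n p^{i_l}\Bigr)\tilde p^{i_d}\prod_{j=d+1}^{n}\bigl(1-\tilde p^{i_j}\bigr),\qquad p^{i_0}=p^0=1-\sum_{l=1}^n p^{i_l},
\end{align}
equivalently, that $\tilde p^{i_d}=p^{i_d}/\sum_{l=d-1}^{n}p^{i_l}$ is the normalized coordinate at the $d$th stage (this is exactly the shape already visible in~\eqref{eq_blowup_sol_sim_0} and~\eqref{eq_iter_ext}); I would establish it cleanly by one induction rather than by grinding through $n-1$ substitutions.

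**Identifying $N_d$.** With~\eqref{eq_coord_recovery} in hand, the face $N_d=\cube_{d-1}^{(I_{d-1}^\prime)}\times\{0\}^{(\{i_d\})}\times\cl{\cubex_{n-d}^{(I_n^\prime\smin I_d^\prime)}}$ is characterized by $\tilde p^{i_d}=0$, $\tilde p^{i_j}\in(0,1)$ for $j<d$, and $\tilde p^{i_j}\in[0,1]$ not all zero for $j>d$. Reading~\eqref{eq_coord_recovery} at such a point gives $p^{i_d}=p^{i_{d+1}}=\dots=p^{i_n}=0$ (each carries the factor $\tilde p^{i_d}=0$, directly for $p^{i_d}$ and via the product $\prod_{j\ge d+1}(1-\tilde p^{i_j})$ after cancellation — more precisely $p^{i_\ell}$ for $\ell>d$ picks up the factor $(1-\tilde p^{i_{\ell+1}})\cdots$, but the normalizing sum $\sum_{l=\ell-1}^n p^{i_l}$ itself vanishes, so one argues directly that the preimage under the inverse~\eqref{eq_blowup_inverse} of $\{\tilde p^{i_d}=0\}$ forces $p^{i_d}+\dots+p^{i_n}=0$). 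Hence the point lies in $\cl{\D_{d-1}^{(I_{d-1})}}$, and on the open part of $N_d$ the coordinates $p^{i_1},\dots,p^{i_{d-1}}$ are strictly positive, giving $\D_{d-1}^{(I_{d-1})}$; simultaneously the surviving cube coordinates $\tilde p^{i_{d+1}},\dots,\tilde p^{i_n}$ are precisely the ratios $\tilde p^{i_{d+1}}=p^{i_{d+1}}/\!\sum_{l=d}^n p^{i_l},\dots,\tilde p^{i_n}=p^{i_n}/(p^{i_{n-1}}+p^{i_n})$ listed in the statement — which, on $N_d$ itself, survive as the limits of these ratios along sequences in the interior $\D_n^{(I_n)}$ approaching $\D_{d-1}^{(I_{d-1})}$, since on $\{p^{i_d}=\dots=p^{i_n}=0\}$ these ratios are $0/0$. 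That limit interpretation is exactly what Lemma~\ref{lem_blowup_trans} furnishes (the blow-up adjoins these limiting directions as the new face $N_d$), so this part is essentially bookkeeping once~\eqref{eq_coord_recovery} is proved.

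**Identifying $\{\tilde p^{i_j}=1\}$.** For the second assertion, fix $j\in\{1,\dots,d-1\}$ and look at the hyperplane $\{\tilde p^{i_j}=1\}\subset\cl{\cube_{d-1}^{(I_{d-1}^\prime)}}$. From the stagewise reading $\tilde p^{i_j}=p^{i_j}/\sum_{l=j-1}^{n}p^{i_l}$, the condition $\tilde p^{i_j}=1$ is equivalent to $\sum_{l=j-1}^{n}p^{i_l}=p^{i_j}$, i.e. (since all terms are $\ge 0$) $p^{i_{j-1}}=0$ and $p^{i_l}=0$ for all $l>j$; within the stratum $\D_{d-1}^{(I_{d-1})}$ (where $p^{i_l}=0$ for $l>d-1$ already, and $p^{i_1},\dots,p^{i_{d-1}}>0$ on the interior), the extra constraints $p^{i_l}=0$ for $l\ge d$ are automatic, so the effective new constraint is just $p^{i_{j-1}}=0$. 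Hence $\{\tilde p^{i_j}=1\}$ corresponds to $\{p^{i_{j-1}}=0\}$ in $\cl{\D_{d-1}^{(I_{d-1})}}$, and its relative interior — where additionally $p^{i_l}>0$ for $l\in\{0,\dots,d-1\}\smin\{j-1\}$ and $\tilde p^{i_r}\in(0,1)$ for the other $r$ — corresponds exactly to $\D_{d-2}^{(I_{d-1}\smin\{i_{j-1}\})}$, as claimed. The main obstacle is bookkeeping, not ideas: one must be careful that the blow-up transformations at different steps act on overlapping index pairs in a prescribed order, so the "current" normalizing denominator changes at each stage; I would handle this by committing to the invariant~\eqref{eq_coord_recovery} early and reading all three claims off it, rather than chasing the geometry face-by-face, and I would double-check the degenerate cases $j=1$ (where $i_{j-1}=i_0=0$, so $\{\tilde p^{i_1}=1\}$ corresponds to $\{p^0=0\}$) and $d=n$ (where $\cl{\cubex_{n-d}^{(\varnothing)}}$ degenerates) separately.
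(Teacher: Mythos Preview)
Your overall strategy---write the composite blow-up explicitly and read off both correspondences from the resulting coordinate formulae---is exactly what the paper does. The problem is that your explicit formula for the composite map is wrong, and the error is not cosmetic: it breaks the second half of the lemma.

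You assert $\tilde p^{i_d}=p^{i_d}\big/\sum_{l=d-1}^{n}p^{i_l}$ and the inverse display~\eqref{eq_coord_recovery}. But compose two steps carefully. After step~$1$ the slot $i_{n-1}$ holds $p^{i_{n-1}}+p^{i_n}$; step~$2$ then produces in slot $i_{n-1}$ the ratio
\[
\frac{p^{i_{n-1}}+p^{i_n}}{p^{i_{n-2}}+p^{i_{n-1}}+p^{i_n}},
\]
not $p^{i_{n-1}}/(p^{i_{n-2}}+p^{i_{n-1}}+p^{i_n})$. Iterating, the correct formulae (those the paper records) are
\[
\tilde p^{i_1}=\sum_{l=1}^n p^{i_l},\qquad
\tilde p^{i_j}=\frac{\sum_{l=j}^{n}p^{i_l}}{\sum_{l=j-1}^{n}p^{i_l}}\quad(j\ge 2),
\qquad
p^{i_j}=\tilde p^{i_1}\cdots\tilde p^{i_j}\,(1-\tilde p^{i_{j+1}}).
\]
Note the inverse has a \emph{single} factor $(1-\tilde p^{i_{j+1}})$, not the product $\prod_{l>j}(1-\tilde p^{i_l})$ you wrote; and the ``surviving ratios'' on $N_d$ are the partial-sum ratios listed in the lemma, not your $p^{i_{d+1}}/\sum_{l\ge d}p^{i_l}$, etc.

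This matters for the second claim. With your formula, $\tilde p^{i_j}=1$ forces $p^{i_{j-1}}=0$ \emph{and} $p^{i_l}=0$ for every $l>j$. You try to absorb the extra constraints by saying they are ``automatic'' in $\D_{d-1}^{(I_{d-1})}$, but that only handles $l\ge d$; for $j<d-1$ your argument would also kill $p^{i_{j+1}},\dots,p^{i_{d-1}}$, contradicting the assertion that the interior of $\{\tilde p^{i_j}=1\}$ corresponds to $\D_{d-2}^{(I_{d-1}\smin\{i_{j-1}\})}$, where those coordinates are strictly positive. With the correct formula, $\tilde p^{i_j}=1$ is exactly $\sum_{l\ge j}p^{i_l}=\sum_{l\ge j-1}p^{i_l}$, i.e.\ $p^{i_{j-1}}=0$ alone, and the claim follows immediately. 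So fix the composite formula (one clean induction on the number of steps suffices), and the rest of your write-up goes through as planned.
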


\begin{proof}
 To take account of the `additional' faces $N_m$ of $\cl{\cube_{n\phantom{1}}^{(I^\prime_n)}}$ produced during the blow-up transformations, we carry out the full blow-up transformation by proposition~\ref{prop_blowup_trans_total}, yielding
%At first we note that with the solution $\bar{u}_n^{i_0,\dotsc,i_{n}}$ in $\D_n^{(\set{0,1,\dotsc,n})}$ resp.\ $\tilde{u}_n[i_0,\dotsc,i_{n-1}]$ in $\cube_n^{(\set{0,1,\dotsc,n})}$ for some $n\in\N$ with $I_{n-1}\subset\set{0,\dotsc,n}$ given the full reverse transformation of the coordinates $\tilde{p}\in\cube_n^{(\set{0,1,\dotsc,n})}$ yields
\begin{gather}
\tilde{p}^{i_1}\ce p^{i_1}+\dotsc+p^{i_n},\\
\tilde{p}^{i_2}\ce
\begin{cases}
\frac{p^{i_2}+\dotsc+p^{i_n}}{p^{i_1}+p^{i_2}+\dotsc+p^{i_n}}&\text{for ${p^{i_1}+\dotsc+p^{i_n}}>0$}\\
 0 	 &\text{for ${p^{i_1}+\dotsc+p^{i_n}}=0$,}
\end{cases}\\
\hspace{0.6em}\vdots\notag\\
\tilde{p}^{i_j}\ce
\begin{cases}
\frac{p^{i_{j}}+\dotsc+p^{i_n}}{p^{i_{j-1}}+p^{i_{j}}+\dotsc+p^{i_n}}&\text{for $p^{i_{j-1}}+\dotsc+p^{i_n}>0$}\\
 0 	 &\text{for $p^{i_{j-1}}+\dotsc+p^{i_n}=0$,}
\end{cases}\\
\hspace{0.6em}\vdots\notag\\
\tilde{p}^{i_{n}}\ce
\begin{cases}
\frac{p^{i_n}}{p^{i_{n-1}}+p^{i_n}}&\text{for $p^{i_{n-1}}+p^{i_n}>0$}\\
 0 	 &\text{for $p^{i_{n-1}}+p^{i_n}=0$}
\end{cases}
\end{gather}
%\intertext{}
for $p\in\bigcup_{d=0}^n\D_{d\phantom{1}}^{(I_d)}$ %and  $\tilde{p}\in{\cube_{n\phantom{1}}^{(I_n)}}$.
and conversely
\begin{gather}
{p}^{i_1}=\tilde{p}^{i_1}(1-\tilde{p}^{i_2}),\\
\hspace{0.6em}\vdots\notag\\\label{eq_blowup_rev_j}
{p}^{i_j}=\tilde{p}^{i_1}\cdots\tilde{p}^{i_j}(1-\tilde{p}^{i_{j+1}}),\\
\hspace{0.6em}\vdots\notag\\
{p}^{i_{n-1}}=\tilde{p}^{i_1}\dotsm\tilde{p}^{i_{n-1}}(1-\tilde{p}^{i_{n}}),\\
{p}^{i_n}= \tilde{p}^{i_1}\dotsm\tilde{p}^{i_n}
\end{gather}
for $\tilde{p}\in\bigcup_{d=0}^n\cube_{d\phantom{1}}^{(I_d^\prime)}$ (note that we also have
${p}^{i_0}=1-\tilde{p}^{i_1}$); however, the given equations also smoothly extend to the entire $\cl{\cube_{n\phantom{1}}^{(I^\prime_n)}}$. This allows it to also transform the $N_d\subset\overline{\cube}_n$ back to $\overline{\D}_n$, \ie $\tilde{p}^{i_d}=0$ implies $p^{i_d},\dotsc,p^{i_n}=0$, whereas $0<\tilde{p}^{i_1},\dotsc,\tilde{p}^{i_{d-1}}<1$ leads to $p^{i_1},\dotsc,p^{i_{d-1}}>0$. Keeping the values of $\tilde{p}^{i_{d+1}},\dotsc,\tilde{p}^{i_n}$ yields the pivotal allele (limit) ratios  $\frac{p^{i_{d+1}}+\dotsc+p^{i_{n}}}{p^{i_{d}}+p^{i_{d+1}}+\dotsc+p^{i_{n}}},\dotsc,\frac{p^{i_n}}{p^{i_{n-1}}+p^{i_n}}$. If however $\tilde{p}^{i_j}=1$, this corresponds to ${p}^{i_{j-1}}=0$  (and $p^{i_{1}},\dotsc,{p}^{i_{j-1}},{p}^{i_{j+1}}\dotsc,p^{i_{d}}>0$ if $0<\tilde{p}^{i_1},\dotsc,\tilde{p}^{i_{j-1}},\tilde{p}^{i_{j+1}},\dotsc,\tilde{p}^{i_d} <1$ and $\tilde{p}^{i_{d+1}}=0$).
\end{proof} 

\begin{proof}[Proof of lemma~\ref{lem_blowup_reg}]
By lemma~\ref{lem_blowup_trans} and proposition~\ref{prop_blowup_trans_total} \resp corollary~\ref{cor_blowup_solution}, the full {blow-up transformation}\index{blow-up transformation!iterated} respectively maps % Es geht um die Überlappungen!
%\begin{align}
%\D_d^{(I_d)}\longmapsto\cube_{d}^{(I^\prime_d)}%\quad\text{for $d=0,\dotsc,n$}
%\end{align}
\begin{align}\label{eq_blowup_assemb}
\bigcup_{d=0}^n\D_d^{(I_d)}\longmapsto\bigcup_{d=0}^n\cube_{d}^{(I^\prime_d)}%\quad\text{for $d=0,\dotsc,n$}
\end{align}
$C^\infty$-diffeomorphically (\cf equation~\eqref{eq_diffeom_dimwise_0}). By the $C_{p_0}^\infty$-regu\-lar\-ity of~$U$,  $u_n$ in $\D_n^{(I_n)}$ smoothly connects with  $u_{n-1}$ in $\D_{n-1}^{(I_{n-1})}$, and consequently so does $\tilde{u}_n$ in $\cube_n^{(I^{\prime}_n)}$ with $\tilde{u}_{n-1}$ in $\cube_{n-1}^{(I^{\prime}_{n-1})}$; an analogous statement holds for all lower dimensions.
%By the prerequisites for the domination limit property and by proposition~\ref{prop_blowup_trans_total}, $\tilde{U}$ is already of class  $C_{p_0}^\infty(\bigcup_{d=0}^n\cube_{d\phantom{1}}^{(I_d^\prime)})$ and
Thus it remains to be shown that $\tilde{U}$ extends those faces of $\cl{\cube_n^{(I^\prime_n)}}$ given by $\set{\tilde{p}^{i_j}=1}$ for $j=1,\dotsc,n$ such that the extension is of class $C_p^\infty$.

In anticipation of lemma~\ref{lem_seq_N} on p.~\pageref{lem_seq_N}, the interior of $\set{\tilde{p}^{i_j}=1}\subset\cl{\cube_n^{(I^\prime_n)}}$ corresponds to $p^{i_{j-1}}=0$ and $p^{i_l}>0$ for $l\neq j-1$ in $\cl{\D^{(I_n)}_n}$, thus to $\D^{(I_n\smin\set{i_{j-1}})}_{n-1}$, which is a boundary face of $\D^{(I_n)}_n$ outside the assumed extension path defined by the (ordered) $I_n$. Hence by the $C^\infty_{p_0}$-regu\-lar\-ity, the relevant continuous extension of~$U$ needs to be zero there, and this is attained smoothly when coming from the interior $\D_n^{(I_n)}$. Considering the diffeomorphism properties of the transformation, this also applies to the cube.% furthermore, this also applies to shared boundaries of those faces, \ie with multiple coordinates equalling one. 

An analogous observation holds for subcubes ${\cube_{d-1}^{(I^\prime_{d-1})}}\subset\cl{\cube}_n$, $d=1,\dots,n$: The interior of its face $\set{\tilde{p}^{i_j}=1}$ corresponds to $\D_{d^\prime-1}^{(I_{d-1}\smin\set{i_{j-1}})}\subset\cl{\D_{d-1}^{(I_{d-1})}}$ when transformed back to the simplex (\cf equation~\eqref{eq_blowup_assemb} and lemma~\ref{lem_seq_N}). This is again outside the assumed \txind{extension path}, in particular if starting in ${\D_{d-1}^{(I_{d-1})}}$, and hence the corresponding boundary extension of $u_{d-1}$ needs to smoothly attain zero there by the $C^\infty_{p_0}$-regu\-lar\-ity, which likewise applies analogously to the cube. %, which also applies to the cube by the diffeomorphism properties of the transformation.
\end{proof}

\section{The uniqueness of solutions of the stationary \KBE}\label{sec_uni}

%Die erste Fassung der Lemma (da noch ein gemeinsames Lemma) ist unter Sich-Latex-110810
%Die zweite Fassung der beiden Lemmata (noch mit $i_j$) ist unter Sich-Latex-110811

The main application of the blow-up scheme is the uniqueness proof for the iteratively extended solutions of the \KBE in accordance with the extension constraints\index{extension constraints (n-dim)@extension constraints ($n$-dim)}~\ref{dfi_ext}. However, as already mentioned, in the presented work, this is limited to the stationary components. First, we will discuss the uniqueness of solutions of the correspondingly transformed stationary \KBE \index{Kolmob@\KBE!stationary ($n$-dim)}on the cube (which is basically analogous to the simplex, \cf section 10 in \cite{THJ5}). After that, the main result will be stated by applying the uniqueness result for the cube to the transformed iteratively\index{backward extension!iterated} extended solutions (assuming sufficient regularity if necessary).
%, thus substantially broadening the previous uniqueness result of proposition~\ref{prop_simp_sol} in \cite{THJ5}.

Regarding the uniqueness of stationary solutions on the cube with the transformed Kolmogorov backward operator\index{backward operator} given by equation~\eqref{eq_L_cube_0}, we have in conjunction to lemma~10.1 %\ref{lem_stem}
in \cite{THJ5} for the simplex: 

\begin{lem}[stem lemma, cube version]\label{lem_stem_cube}
For a solution $u\in C^\infty(\cube_n)$ of the stationary \KBE\index{Kolmob@\KBE!stationary ($n$-dim)} $\tilde{L}_n^* u=0$ 
\text{in $\cube_{n}$} 
with
\begin{align}\label{eq_L_cube_stat}
%L^*_{\cube_n}\ce
\tilde{L}_n^*\ce
\half\sum_{i=1}^{n} \frac{\tilde{p}^{i}(1-\tilde{p}^{i})}{\prod^{i-1}_{j=1} \tilde{p}^{j}}\ddsq{}{\tilde{p}^{i}}
\end{align}
and with extension $U\in C_p^\infty(\cl{\cube}_n)$, 
we have 
\begin{align}\label{eq_L_cube_stat_ext}
\tilde{L}^*U=0\quad\text{in $\cl{\cube}_n$},
\end{align}
\ie 
\begin{align}\label{eq_L_cube_stat_d-face}
%L^*_{\cube_n}\ce
\tilde{L}_d^*U=0\quad\text{with}\quad
\tilde{L}_d^*\ce
\half\sum_{\substack{i=\maxind(d)+1\\i \neq i_m}}^{n} \frac{\tilde{p}^{i}(1-\tilde{p}^{i})}{\prod\limits^{i-1}_{\substack{{j=\maxind(d)+1}\\j\neq i_m}} \tilde{p}^{j}}\ddsq{}{\tilde{p}^{i}}
%\quad\text{on ${\cube_{n\phantom{1}}^{(I_n\smin\set{0})}}$.}
\end{align}
in $\cube_d=\bigset{\tilde{p}^{i_1}=b_{i_1},\dotsc,\tilde{p}^{i_{n-d}}=b_{i_{n-d}}}\subset\bd_{d}\cube_n$ for all $1\leq d\leq n-1$ and all $i_1,\dotsc,i_{n-d}\in\set{1,\dotsc,n}$, $i_k\neq i_l$ for $k\neq l$ with $\maxind=\maxind(d)\ce\underset{i_1,\dotsc,i_{n-d}}{\argmax}\set{b_{i_m}=0}$ \resp $\maxind(d)\ce0$ if $b_{i_m}=1$ for all $i_m$.
\end{lem}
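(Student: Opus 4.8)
The plan is to read the assertion \eqref{eq_L_cube_stat_ext} as \eqref{eq_L_cube_stat_d-face}, \ie as the statement that on every open face $\cube_d\subset\cl{\cube}_n$ with $1\le d\le n-1$ the face operator $\tilde{L}_d^*$ annihilates~$U$, and to prove this by downward induction on~$d$, the case $d=n$ being the hypothesis. The only analytic input will be the $C_p^\infty$-regularity of $U$: whenever $\cube_d$ is a face of a face $\cube_{d'}$ with $d<d'$, the restriction $U|_{\cube_{d'}\cup\bd_{d'-1}\cube_{d'}}$ is $C^\infty$, so every second derivative $\ddsq{}{\tilde{p}^i}$ in a coordinate $\tilde{p}^i$ of $\cube_{d'}$ extends continuously from $\cube_{d'}$ to $\cube_d$, with boundary value on $\cube_d$ the corresponding derivative of $U|_{\cube_d}$. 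Consequently $\tilde{L}_d^*U$ is a well-defined continuous function on $\cube_d$, equal to the limit, taken from $\cube_{d'}$, of the same differential expression applied to the interior solution.

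First I would settle the facets $\cube_{n-1}=\{\tilde{p}^q=b\}$. If $b=1$, then $\maxind(n-1)=0$, and the coefficients of $\tilde{L}_n^*$ are continuous up to $\{\tilde{p}^q=1\}$, since the only coordinate that could spoil a denominator is $\tilde{p}^q$ itself, which there tends to~$1$; evaluating $\tilde{L}_n^*u=0$ on this facet and noting that the $i=q$ summand carries the vanishing factor $\tilde{p}^q(1-\tilde{p}^q)$, while the remaining summands specialise exactly to those of $\tilde{L}_{n-1}^*$, gives $\tilde{L}_{n-1}^*U=0$ there. If $b=0$, so $\maxind(n-1)=q$, I would multiply $\tilde{L}_n^*u=0$ by $\prod_{j=1}^{q}\tilde{p}^j$: the coefficient of $\ddsq{}{\tilde{p}^i}$ then becomes $\tilde{p}^i(1-\tilde{p}^i)\prod_{j=i}^{q}\tilde{p}^j$ for $i\le q$, which vanishes on $\{\tilde{p}^q=0\}$, and $\tilde{p}^i(1-\tilde{p}^i)/\prod_{j=q+1}^{i-1}\tilde{p}^j$ for $i>q$, which is continuous up to the open facet (the remaining coordinates staying positive) and is precisely the coefficient appearing in $\tilde{L}_{n-1}^*$. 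Since $U$ is smooth up to this facet, letting $\tilde{p}^q\to0$ yields $\tilde{L}_{n-1}^*U=0$, the blow-up of the individual coefficients of $\tilde{L}_n^*$ being absorbed by the prefactor.

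For the inductive step, assume $\tilde{L}_{d+1}^*U=0$ on all $(d+1)$-faces and let $\cube_d$ be a $d$-face. If $\cube_d$ has a coordinate fixed at~$1$, or a coordinate fixed at~$0$ with index strictly below $\maxind(d)$, I would pick such a coordinate $\tilde{p}^q$ and let $\cube_{d+1}$ be the face obtained by releasing it; then $\maxind(d+1)=\maxind(d)$. If $\tilde{p}^q$ does not occur in $\tilde{L}_{d+1}^*$ — which happens exactly when its index lies below $\maxind(d)$ — then $\tilde{L}_d^*$ and $\tilde{L}_{d+1}^*$ have the same coefficients and restricting $\tilde{L}_{d+1}^*U=0$ to $\cube_d$ closes the step; if $\tilde{p}^q$ does occur (which forces it to have been fixed at~$1$), the same limiting argument as in the $b=1$ facet case, now carried out inside $\cube_{d+1}$ as $\tilde{p}^q\to1$, gives $\tilde{L}_d^*U=0$. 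The only case not reached this way is that every fixed coordinate of $\cube_d$ equals~$0$ and there is exactly one of them, \ie $\cube_d$ is a facet $\{\tilde{p}^q=0\}$, which has already been treated. Since $\cube_d$ carries only finitely many fixed coordinates this recursion terminates, so all faces are covered.

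I expect the genuinely delicate point to be the facet case $b=0$: there $\tilde{L}_n^*$ really degenerates, its coefficients blowing up like $1/\tilde{p}^q$, so a termwise passage to the limit is impossible. The resolution is that multiplying through by $\prod_{j=1}^q\tilde{p}^j$ produces an operator whose coefficients are continuous up to the facet, and whose interior equation $\big(\prod_{j=1}^q\tilde{p}^j\big)\tilde{L}_n^*u=0$ therefore extends to the facet by continuity precisely because $U\in C_p^\infty$ guarantees that all second derivatives of $u$ are continuous up to the facet in the full-dimensional sense and that the limiting tangential derivatives coincide with those of $U|_{\{\tilde{p}^q=0\}}$. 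Everything else is bookkeeping with the ordered index sets and the definition of $\maxind(d)$, and is completely parallel to the simplex version, Lemma~10.1 in~\cite{THJ5}.
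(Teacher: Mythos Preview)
Your proof is correct and follows essentially the same approach as the paper's: a downward induction on the face dimension, passing the equation $\tilde{L}^*U=0$ from a face to its boundary by continuity of the coefficients and of the second derivatives (via the $C_p^\infty$ assumption), and clearing the blowing-up denominator at a $\tilde{p}=0$ face by multiplying through with the offending factor. The only difference is organisational: the paper, at each step $\cube_{d+1}\to\cube_d$, treats all three types of newly fixed coordinate (index below $\maxind$, index above $\maxind$ with value~$1$, index above $\maxind$ with value~$0$), whereas you isolate the genuinely degenerate transition into the facet base case $\{\tilde p^q=0\}$ and then, for lower-dimensional faces, always release a coordinate that leaves $\maxind$ unchanged so that only the two easy transitions occur in the induction. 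This is a harmless reshuffling of the same argument.
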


\begin{proof}
The statement is proven iteratively: Assuming that equation~\eqref{eq_L_cube_stat_d-face} holds in some (arbitrary) domain $\cube_{d+1}\subset\bd_{d+1}\cube_n$, we show that a corresponding formula also holds for any $\cube_{d}\subset\bd_{d}\cube_{d+1}\subset\bd_d\cube_n$. A repeated application of the argument then yields the assertion.

Let $\cube_{d+1}=\bigset{\tilde{p}^{i_1}=b_1,\dotsc,\tilde{p}^{i_{n-d-1}}=b_{n-d-1}}$ and $\cube_{d}=\bigset{\tilde{p}^{i_1}=b_1,\dotsc,\tilde{p}^{i_{n-d}}=b_{n-d}}$ with $i_{n-d}\neq i_1,\dotsc,i_{n-d-1}$ and $b_{n-d}\in\set{0,1}$. If we have $i_{n-d}<\maxind(d+1)$, then as $\tilde{p}^{i_{n-d}}\to 0$ \resp $\tilde{p}^{i_{n-d}}\to 1$, the value of the operator in equation~\eqref{eq_L_cube_stat_d-face} applied to $U$ -- with the occurring derivatives and the coefficients being continuous -- depends continuously on~$\tilde{p}$ up to the boundary, thus equation~\eqref{eq_L_cube_stat_d-face}, which already has the corresponding form for $\cube_{d}$ (note $\maxind(d)\equiv\maxind(d+1)$), also holds on $\cube_{d}$.

If we rather have $ i_{n-d}>\maxind(d+1)$ and $b_{n-d}=1$, then, when choosing some $\tilde{p}\in\cube_{d}$ and a sequence $(\tilde{p}_l)_{l\in\N}$ in $\cube_{d+1}$ with $\tilde{p}_l\to \tilde{p}$, the expression
\begin{align}
\half\frac{\tilde{p}_l^{i_{n-d}}(1-\tilde{p}_l^{i_{n-d}})}{\prod_{\substack{j={\maxind(d)}+1\\j\neq i_m}}^{i_{n-d}-1} \tilde{p}_l^{j}}\ddsq{}{\tilde{p}_l^{i_{n-d}}}U(\tilde{p}_l)
\end{align}
%(if $\maxind=0$ and $i_{n-d}=n$ the summand $\half{{p}^{n}(1-{p}^{n})}\ddsq{}{{p}^{n}}$)
is controlled by $(1-\tilde{p}_l^{i_{n-d}})$ while approaching~$\tilde{p}$ and -- with the derivatives of $U$ being bounded on a closed neighbourhood of~$\tilde{p}$ by reason of the regularity of~$U$ -- is continuous up to~$\tilde{p}$. Analogous to the previous case, all other summands of the operator in equation~\eqref{eq_L_cube_stat_d-face} are also continuous on the boundary, thus proving that the corresponding form of equation~\eqref{eq_L_cube_stat_d-face} (with the $i_{n-d}$\ord summand deleted) holds in $\cube_{d}$  (again $\maxind(d)\equiv\maxind(d+1)$).

If instead $i_{n-d}>\maxind(d+1)$ and $b_{n-d}=0$, then we may multiply the whole equation~\eqref{eq_L_cube_stat_d-face} by $\tilde{p}^{i_{n-d}}$. If now $\tilde{p}^{i_{n-d}}\to 0$, then by a similar argument as above all derivatives of the operator that do not contain $\tilde{p}^{i_{n-d}}$ in the denominator of their coefficient continuously vanish, whereas the  values of all other summands are also continuous up to the boundary. Thus, equation~\eqref{eq_L_cube_stat_d-face} holds  on $\cube_{d}$ with the index $\maxind(d+1)$ replaced by $\maxind(d)=i_{n-d}$.
\end{proof}

The obtained equation~\eqref{eq_L_cube_stat_ext} may again be perceived as an extended version of the stationary \KBE on the cube (\cf also equation~\eqref{eq_back_n_stat_ext}, although the domains do not fully correspond), and we have (\cf proposition~10.2 %\ref{prop_simp_sol} 
in \cite{THJ5}):
\begin{prop}\label{prop_unique_cube}
A solution $U\in C_p^\infty(\cl{\cube}_n)\cap C^0\big(\cl{\cube}_{n}\big)$ of %
the extended stationary \KBE\index{Kolmob@\KBE!extended stationary ($n$-dim)}
%A function $U\in C^2\big(\cl{\cube}_{n}\big)$ fulfilling
\begin{align}\label{eq_L_cube_simple_2}
\tilde{L}^*U=0 \quad\text{in $\cl{\cube}_{n}$}
\end{align}
with $\tilde{L}^*$ as in equation~\eqref{eq_L_cube_stat_d-face} 
%and $U|_{\bd_k\cube_{n}}\in C_b^2(\bd_k\cube_{n})\cap C^0\big(\cl{\cube}_{n}\big)$ for all $1\leq k \leq n$ such that derivatives up to degree 2  in $\bd_k\cube_{n}$ connect continuously with the corresponding ones in $\bd_{k+1}\cube_{n}$
%
is uniquely determined by its values on $\bd_0\cube_{n}$.
\end{prop}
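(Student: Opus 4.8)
The plan is to reduce the assertion to a vanishing statement and then prove it by an induction over the faces of the cube, using a maximum principle on each face. Let $U_1,U_2\in C_p^\infty(\cl{\cube}_n)\cap C^0(\cl{\cube}_n)$ solve \eqref{eq_L_cube_simple_2} and agree on $\bd_0\cube_n$, and put $V\ce U_1-U_2$. By linearity $V$ belongs to the same class, satisfies $\tilde L^*V=0$ in $\cl{\cube}_n$ in the extended sense of lemma~\ref{lem_stem_cube} — that is, $\tilde L_d^*V=0$ on every face $\cube_d\subset\cl{\cube}_n$ — and $V$ vanishes on all $2^n$ vertices. It then suffices to show $V\equiv0$.

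First I would record the features of $\tilde L_d^*$ that make a maximum principle available: by equation~\eqref{eq_L_cube_stat_d-face} it is a sum $\tfrac12\sum_{i\in S}c_i(\tilde p)\,\ddsq{}{\tilde p^i}$ of \emph{pure} second derivatives in (a subset $S$ of) the free coordinates of $\cube_d$, with no mixed derivatives and no zeroth-order term, and with coefficients $c_i$ strictly positive on the relative interior of $\cube_d$. The set $S$ is non-empty for $d\ge1$ (the second-derivative term in the highest-order remaining coordinate is never lost under the successive restrictions appearing in lemma~\ref{lem_stem_cube}), but it need not exhaust the free coordinates: on the additional faces $N_j$, and more generally on any face on which a coordinate has been pinned to~$0$, the ``lower'' directions drop out of $\tilde L_d^*$.

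The induction runs on $d=0,1,\dotsc,n$, with the claim ``$V\equiv0$ on $\cl{\cube_d}$ for every $d$-dimensional face $\cube_d\subset\cl{\cube}_n$''. The case $d=0$ is the hypothesis on the vertices. For the step, fix a $d$-face $\cube_d$; by the inductive hypothesis $V$ vanishes on $\bd\cube_d$, which is a union of faces of dimension $<d$. On $\cube_d$ I would slice by the complementary coordinates $T\ce(\text{free coordinates of }\cube_d)\setminus S$, fixing them at arbitrary interior values. On each such slice the operator $\tilde L_d^*$ is genuinely elliptic in the $S$-coordinates — uniformly so on compact subsets, with no zeroth-order term — $V$ is continuous up to the slice's relative boundary, and that boundary is contained in faces of $\cl{\cube}_n$ of dimension $\le d-1$, on which $V=0$ by the inductive hypothesis. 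The strong maximum principle (for $\abs{S}=1$ simply the remark that a function with vanishing second derivative on a segment is affine, hence determined by its two endpoint values) then forces $V\equiv0$ on the slice; letting the $T$-coordinates vary and using continuity of $V$ yields $V\equiv0$ on $\cl{\cube_d}$. When $S$ already exhausts the free coordinates — automatically for $d=n$, and for instance on all faces $\set{\tilde p^{i_j}=1}$ and on $N_1$ — no slicing is needed and the maximum principle is applied to $\cube_d$ directly. Taking $d=n$ gives $V\equiv0$ on $\cl{\cube}_n$, i.e.\ $U_1=U_2$.

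The step I expect to be the main obstacle is precisely the one absent from the simplex argument (proposition~10.2 in \cite{THJ5}): on the simplex $L^*$ restricts to a genuine \WF operator on every face, non-degenerate in its interior, so a face-by-face maximum principle applies directly, whereas here the blow-up produces the partially degenerate faces $N_j$, on which $\tilde L_d^*$ is elliptic only in the ``upper'' coordinate directions, and a maximum principle cannot be applied to such a face as it stands. The slicing above is what circumvents this, reducing the problem on such a face to a (possibly still degenerate, but now non-trivially elliptic) Dirichlet problem on a lower-dimensional box whose boundary data are supplied inductively from the lower-dimensional faces. Consequently the one genuinely new ingredient that needs care is the bookkeeping — the $\maxind$-index of lemma~\ref{lem_stem_cube} — of which coordinate directions survive in $\tilde L_d^*$ on each face, together with the verification that the relative boundary of each slice indeed lies in faces of strictly smaller dimension.
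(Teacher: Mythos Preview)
Your proposal is correct and follows essentially the same strategy as the paper: an induction over the dimension of the faces, applying the maximum principle on each face, and handling the faces on which $\tilde L_d^*$ involves only $d'<d$ directions by slicing into $d'$-dimensional fibres whose boundaries lie in faces of dimension $d-1$. The paper's proof is terser (it merely says ``analogous considerations apply for each $d'$-dimensional fibre''), but your explicit identification of the active set $S$, the complementary set $T$, and the observation that the slice boundary sits inside $(d-1)$-faces is exactly the content of that remark.
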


\begin{proof}
The uniqueness may be shown by a successive application of the maximum principle: In every instance of the domain ${\cube_{d}}\subset\bd_d\cube_{n}$ for all $1\leq d \leq n$,  
%the operator $L^*_\cube$ is locally uniformly elliptic and hence
the solution $U\vert_{{\cube_{d}}}$ is uniquely defined by its values on $\bd{\cube_{d}}$: If equation~\eqref{eq_L_cube_stat_d-face} comprises~$d$ derivative terms, this follows directly from Hopf's maximum principle as the operator is locally uniformly elliptic on $\cube_{d}$; if it only comprises $d^\prime<d$ derivative terms, analogous considerations apply for each $d^\prime$-dimen\-sional fibre of ${\cube_{d}}$ (with corresponding boundary part), thus giving the uniqueness of a solution on every fibre first and after assembling also on all $\cube_{d}$.
%Since $U$ by assumption is a solution of equation~\eqref{eq_L_cube_stat_d-face} and of required regularity, it is uniquely defined on $\cube_{d}$ by its values on $\bd\cube_{d}$ and subsequently on all $\bd_d\cube_n$.
Applying this consideration successively for $\bd_{0}\cube_{n},\dotsc,\bd_{n}\cube_{n}=\cube_{n}$ yields the desired global uniqueness.
\end{proof}

With the blow-up scheme at hand, the preceding uniqueness result may also be conveyed to the simplex $\cl{\D}_n$, assuming some additional regularity. We eventually have: 

\begin{thm}\label{thm_uni_backw}
Let $n\in\Np$, $I_d\ce\set{i_0,i_{1},\dotsc,i_d}\subset \set{0,1,\dotsc,n}$ for $d=0,\dotsc,n$ with $i_i\neq i_j$ for $i\neq j$ and $u_{\set{i_0}}\co \D_0^{(\set{i_0})}\map \R$ be given. Then an extension $\bar{U}_{\set{i_0}}^{i_0,\dotsc,i_{n}}\co\bigcup_{0\leq d\leq n}\D_d^{(I_d)}\map\R$ %\bigcup_{ 0 \leq d\leq n}\D_d^{(I_d)}\map\R$
 as in proposition~\ref{prop_ext_iter} is unique within the class of extensions~$U$ which satisfy the extension constraints\index{extension constraints (n-dim)@extension constraints ($n$-dim)}~\ref{dfi_ext}, \ie
\begin{itemize}
 \item[(i)]{are of class $C_{p_0}^\infty\big(\bigcup_{0\leq d\leq n}\D_d^{(I_d)}\big)$ with $U|_{\D^{(\set{i_0})}_0}=u_{\set{i_0}}$ and}
 %\item[(i)]{obey the extension constraints \ref{dfi_ext}, in particular}
 \item[(ii)]{solve the stationary \KBE\eqref{eq_back_n_stat_ext}
 in $\bigcup_{0\leq d\leq n}\D_d^{(I_d)}$,}
 \end{itemize}
as well as, in case $n\geq2$, whose
 \begin{itemize}
 \item[(iii)]{transformation image $\tilde{U}\co\bigcup_{d=0}^n\cube_{d}^{(I^\prime_d)}\map\R$ by a successive blow-up transformation $\Phi^{\si_{n-1}}_{\ri_{n-1}}\circ\dotso\circ\Phi^{\si_1}_{\ri_1}$ as in proposition~\ref{prop_blowup_trans_total} has an extension to the entire boundary $\bd{\cube_{n\phantom{1}}^{(I_n^\prime)}}$ which is of class  $C_p^\infty\big(\cl{\cube_{n\phantom{1}}^{(I_n^\prime)}}\big)
 \cap C^0\big(\cl{\cube_{n\phantom{1}}^{(I_n^\prime)}}\big)$.
 %in $C_p^\infty\big(\cl{\cube_{n\phantom{1}}^{(I_n^\prime)}}\big)\cap C^0\big(\cl{\cube_{n\phantom{1}}^{(I_n^\prime)}}\big)$.
 }  
\end{itemize}
Consequently, also the global extension $\bar{U}_{\set{i_0}}$ as in proposition~8.4 %\ref{prop_ext_iter_glob} 
in \cite{THJ5}
\resp also in theorem~\ref{thm_sol_back_n_ext} is unique.
\end{thm}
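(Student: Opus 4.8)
The plan is to transport the whole problem to the cube by the iterated blow-up and then invoke the uniqueness already available there, Proposition~\ref{prop_unique_cube}. Fix the chain $I_0\subset I_1\subset\dotsc\subset I_n$, write $\bar{U}\ce\bar{U}_{\set{i_0}}^{i_0,\dotsc,i_{n}}$ for the explicit extension of Proposition~\ref{prop_ext_iter}, and let $U$ be any competitor meeting (i)--(iii). I would first settle $n=1$ by hand: on the interval $\D_1^{(I_1)}$ the stationary \KBE forces both $U$ and $\bar{U}$ to be affine, the extension constraints fix both nodal values (namely $u_{\set{i_0}}$ at $\D_0^{(\set{i_0})}$ and $0$ at the opposite vertex), and the one-dimen\-sional maximum principle (\cf section~10 in \cite{THJ5}) gives $U=\bar{U}$. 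For $n\geq2$ I would apply the successive blow-up $\Phi\ce\Phi^{\si_{n-1}}_{\ri_{n-1}}\circ\dotso\circ\Phi^{\si_1}_{\ri_1}$ of Proposition~\ref{prop_blowup_trans_total} / Corollary~\ref{cor_blowup_solution} to both functions, obtaining $\tilde{\bar{U}}$ and $\tilde{U}$; by equations~\eqref{eq_diffeom_dimwise_0} and \eqref{eq_blowup_assemb} this $\Phi$ is a $C^\infty$-diffeomorphism of $\bigcup_{d=0}^n\D_d^{(I_d)}$ onto $\bigcup_{d=0}^n\cube_d^{(I_d^\prime)}$ carrying $L^*$ into the operator $\tilde{L}^*$ of \eqref{eq_L_cube_0}.

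Next I would check that both transformed functions satisfy the hypotheses of Proposition~\ref{prop_unique_cube}. For $\bar{U}$ nothing is required: Corollary~\ref{cor_blowup_solution} gives $\tilde{\bar{U}}(\tilde{p})=u_{\set{i_0}}(1)\prod_{j=1}^n(1-\tilde{p}^{i_j})$ on $\cl{\cube_n^{(I_n^\prime)}}$, a polynomial affine in each coordinate, hence of class $C_p^\infty(\cl{\cube}_n)\cap C^0(\cl{\cube}_n)$ and, since its pure second derivatives vanish, a solution of the extended equation $\tilde{L}^*\tilde{\bar{U}}=0$ in $\cl{\cube}_n$ in the sense of Lemma~\ref{lem_stem_cube} trivially. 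For $U$, hypothesis (ii) gives $L^*U=0$ on $\D_n^{(I_n)}$, so $\tilde{L}_n^*\tilde{U}=0$ in $\cube_n^{(I_n^\prime)}$ with $\tilde{U}\in C^\infty(\cube_n^{(I_n^\prime)})$; Lemma~\ref{lem_blowup_reg} (which uses only the $C_{p_0}^\infty$-regularity and the extension constraints) shows $\tilde{U}$ extends in class $C_p^\infty$ across the faces $\set{\tilde{p}^{i_j}=1}$ and vanishes on them, while hypothesis (iii) supplies the missing smoothness across the exceptional faces $N_j$, so that altogether $\tilde{U}\in C_p^\infty(\cl{\cube}_n)\cap C^0(\cl{\cube}_n)$; the stem lemma~\ref{lem_stem_cube} then upgrades the interior equation to $\tilde{L}^*\tilde{U}=0$ in all of $\cl{\cube}_n$ (this is precisely what carries the equation across the $N_j$ in the stationary case). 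At this point both $\tilde{\bar{U}}$ and $\tilde{U}$ are admissible for Proposition~\ref{prop_unique_cube}, hence each is determined by its restriction to the vertex set $\bd_0\cube_n$.

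It then remains, in accordance with the ``sufficient (unique) boundary data'' remark of the introduction, to match that nodal data. Every vertex of $\cube_n^{(I_n^\prime)}$ other than $\cube_0^{(\varnothing)}=(0,\dotsc,0)$ has some coordinate equal to $1$, so $\tilde{\bar{U}}$ vanishes there by its product form and $\tilde{U}$ vanishes there by the vanishing on $\set{\tilde{p}^{i_j}=1}$ from Lemma~\ref{lem_blowup_reg}; at the base vertex $\cube_0^{(\varnothing)}$, which corresponds under $\Phi^{-1}$ to $\D_0^{(\set{i_0})}$ by Corollary~\ref{cor_blowup_solution} \resp Lemma~\ref{lem_seq_N}, both take the prescribed value $u_{\set{i_0}}$ (for $\bar{U}$ by construction, for $U$ by (i); this is the value written $u_{\set{i_0}}(1)$ above). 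Thus $\tilde{\bar{U}}=\tilde{U}$ on $\bd_0\cube_n$, so Proposition~\ref{prop_unique_cube} gives $\tilde{\bar{U}}=\tilde{U}$ on $\cl{\cube}_n$, and transporting back along the diffeomorphism yields $U=\bar{U}$ on $\bigcup_{0\leq d\leq n}\D_d^{(I_d)}$. For the final assertion I would observe that the global extension $\bar{U}_{\set{i_0}}$ of proposition~8.4 in \cite{THJ5} \resp theorem~\ref{thm_sol_back_n_ext} restricts, along every maximal ascending chain of faces issuing from $\D_0^{(\set{i_0})}$, to a pathwise extension of the form of Proposition~\ref{prop_ext_iter}; the regularity hypotheses being inherited by each such chain (and, via condition (i), propagating to all lower faces), the uniqueness just proved applies chainwise, and since the chains cover the domain, $\bar{U}_{\set{i_0}}$ is uniquely determined.

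The step I expect to be the main obstacle is making the competitor's transform admissible for Proposition~\ref{prop_unique_cube}: an arbitrary iteratively extended solution on $\cl{\D}_n$ need not even be continuous there, so one must genuinely move to the blown-up picture, and it is the combination of Lemma~\ref{lem_blowup_reg} (smoothness and vanishing on $\set{\tilde{p}^{i_j}=1}$), the extra hypothesis (iii) (smoothness across the exceptional faces $N_j$) and the stem lemma~\ref{lem_stem_cube} (propagation of the equation across those faces) that does the real work. Once $\tilde{U}$ is known to be a $C_p^\infty\cap C^0$ solution of the extended equation on the whole closed cube, the comparison of nodal data and the appeal to Proposition~\ref{prop_unique_cube} are routine.
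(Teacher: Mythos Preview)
Your proposal is correct and follows essentially the same route as the paper: handle $n=1$ directly via the maximum principle, for $n\geq2$ push both the explicit extension and an arbitrary competitor through the iterated blow-up, use Lemma~\ref{lem_blowup_reg} together with hypothesis~(iii) and the stem lemma~\ref{lem_stem_cube} to place both transformed functions in $C_p^\infty(\cl{\cube}_n)\cap C^0(\cl{\cube}_n)$ as solutions of the extended equation, match the vertex data (zero everywhere except at $\cube_0^{(\varnothing)}$), and conclude via Proposition~\ref{prop_unique_cube}. The only point worth tightening is the vertex-matching step: Lemma~\ref{lem_blowup_reg} yields vanishing of $\tilde{U}$ only on the (relatively open) faces $\set{\tilde{p}^{i_j}=1}$, and it is the global continuity from~(iii) that then forces vanishing on their closures and hence at every vertex with a coordinate equal to~$1$ --- the paper makes this passage explicit, whereas you invoke~(iii) a step earlier and leave the closure argument implicit.
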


\begin{proof}%[Proof of theorem~\ref{thm_uni_backw} ]
The assertion for the trivial case $n=1$ directly follows, as $\bar{U}_{\set{i_0}}^{i_0,i_1}$ is as already sufficiently regular in $\cl{\D_1^{(I_1)}}\equiv\cl{\cube_1^{(I^\prime_1)}}$ for an application of the maximum principle, in particular globally continuous.
For $n\geq2$, any function~$U$ which is %extension which satisfies the stated requirements as
a solution of the stationary \KBE~\eqref{eq_back_n_stat_ext} in $\cl{\D^{(I_n)}_n}$  
%$\bigcup_{0 \leq d\leq n}\D_d^{(I_d)}$ 
by a full {blow-up transformation}\index{blow-up transformation!iterated} of the domain transforms into a function $\tilde{U}$, which solves the stationary \KBE~\eqref{eq_L_cube} in $\bigcup_{d=0}^n\cube_{d}^{(I^\prime_d)}$ (\cf proposition~\ref{prop_blowup_trans_total} \resp corollary~\ref{cor_blowup_solution} and lemma~\ref{lem_blowup_reg}). Furthermore, with the assumed regularity after a full blow-up, it has an extension to $\cl{\cube_{n\phantom{1}}^{(I_n^\prime)}}$ which is pathwise smooth as well as globally continuous and by lemma~\ref{lem_stem_cube} solves the stationary \KBE $\tilde{L}^*\bar{\tilde{U}}=0$ in $\cl{\cube_{n\phantom{1}}^{(I_n^\prime)}}$ 
with $\tilde{L}^*$ as in equation~\eqref{eq_L_cube_stat_d-face}. Hence, the uniqueness result of proposition~\ref{prop_unique_cube} applies and proves the uniqueness of the transformed function (and, regarding the injectivity of the blow-up, also the uniqueness of $U$) -- for specified boundary data on the entire  $\bd_0\cube_{n\phantom{1}}^{(I_n^\prime)}$. Thus, we only need to show that this boundary data is uniquely determined by the assumptions made. %(note that  the given boundary data also is sufficiently regular by assumption).

This is straightforward: In accordance with lemma~\ref{lem_blowup_reg}, $\tilde{U}$ \resp its corresponding continuous extension vanishes on  $\set{\tilde{p}^{i_j}=1}\subset\bd\cube_{n\phantom{1}}^{(I_n^\prime)}$, $j=1,\dotsc,n$. As by assumption~(iii) the continuous extendability applies to the entire $\cl{\cube_{n\phantom{1}}^{(I_n^\prime)}}$, $\tilde{U}$ \resp its extension even vanishes on
\begin{align}
\cl{\bigset{\tilde{p}^{i_1}=1}},\dotsc,%\cl{\bigset{\tilde{p}^{i_{n-1}}=1}}
\cl{\bigset{\tilde{p}^{i_n}=1}}.%,\cl{\bigset{\tilde{p}^{i_{n}}=0}}.
\end{align}
In particular, this signifies that $\tilde{U}$ \resp its extension vanishes on any vertex $\cube_0\subset\bd_0\cube_{n\phantom{1}}^{(I_n^\prime)}$ -- which may always be written as
\begin{align}
\cube_0=\bigset{\tilde{p}^{i_j}=b_j\quad\text{for $j=1,\dotsc,n$}}\quad\text{with correspondingly $b_j\in\set{0,1}$ --} 
\end{align}
except for the vertex $\cube_0^{(\varnothing)}=\set{(0,\dotso,0)}$, where it attains the value $u_{\set{i_0}}$ as stated previously. Thus, the (transformed) boundary data given on all vertices is identical for any extension in question, and since $\bar{U}_{\set{i_0}}^{i_0,\dotsc,i_{n}}\co\bigcup_{0\leq d\leq n}\D_d^{(I_d)}\map\R$ 
as in proposition~\ref{prop_ext_iter} satisfies the extension constraints\index{extension constraints (n-dim)@extension constraints ($n$-dim)} and has an extension to the entire boundary $\bd{\cube_{n\phantom{1}}^{(I_n^\prime)}}$ which is in $C_p^\infty(\cl{\cube_{n\phantom{1}}^{(I_n^\prime)}})\cap C^0\big(\cl{\cube_{n\phantom{1}}^{(I_n^\prime)}}\big)$ (this may be seen directly from equation~\eqref{eq_blowup_sol_sim_0}), it also is the unique extension.
\end{proof}

% \bibliographystyle{plain}
% \bibliography{Diss_lit}

\end{document}